\DeclareMathOperator{\diag}{diag} 
\newcommand{\er}{\mathbb{R}}
\newcommand{\cee}{\mathbb{C}}
\newcommand{\lam}{\lambda}
\newcommand{\bol}{\hfill\square\\}
\newcommand{\wtil}{\widetilde}
\newcommand{\what}{\widehat}
\renewcommand{\Re}{\mathrm{Re}\,}
\newcommand{\ct}{D}
\newcommand{\vecA}{\mathbf{A}}
\newcommand{\pee}{\mathbf{p}}
\newcommand{\mm}{\mathbf{m}}
\newcommand{\ud}{\,\mathrm{d}}
\newcommand{\Aa}{\mathcal{A}}
\newcommand{\Ai}{\mathrm{Ai}}
\newcommand{\tac}{_{\mathrm{tac}}}
\newcommand{\pp}{\partial}
\newcommand{\si}{\sigma}
\newcommand{\Kaa}{\mathbf{K}}
\newcommand{\Een}{\mathbf{1}}
\newcommand{\Rop}{\mathbf{R}}
\newtheorem{theorem}{Theorem}[section]
\newtheorem{lemma}[theorem]{Lemma}
\newtheorem{proposition}[theorem]{Proposition}
\newtheorem{corollary}[theorem]{Corollary}
\newtheorem{rhp}[theorem]{RH problem}
\theoremstyle{definition}
\theoremstyle{remark}
\newtheorem{remark}[theorem]{Remark}
\numberwithin{equation}{section}
\title{The tacnode kernel: equality of Riemann-Hilbert and Airy resolvent formulas}
\author{Steven Delvaux\footnotemark[1]}
\date{\today}
\begin{document}

\maketitle
\renewcommand{\thefootnote}{\fnsymbol{footnote}}
\footnotetext[1]{Department of Mathematics, University of Leuven (KU Leuven),
Celestijnenlaan 200B, B-3001 Leuven, Belgium. email:
steven.delvaux\symbol{'100}wis.kuleuven.be. The author is a Postdoctoral Fellow
of the Fund for Scientific Research - Flanders (Belgium). \\
}

\begin{abstract} We study nonintersecting Brownian motions with two prescribed starting and ending
positions, in the neighborhood of a tacnode in the time-space plane. Several
expressions have been obtained in the literature for the critical correlation
kernel $K\tac(x,y)$ that describes the microscopic behavior of the Brownian
motions near the tacnode. One approach, due to Kuijlaars, Zhang and the author,
expresses the kernel (in the single time case) in terms of a $4\times 4$ matrix
valued Riemann-Hilbert problem. Another approach, due to Adler, Ferrari,
Johansson, van Moerbeke and Vet\H o in a series of papers, expresses the kernel
in terms of resolvents and Fredholm determinants of the Airy integral operator
acting on a semi-infinite interval $[\sigma,\infty)$, involving some objects
introduced by Tracy and Widom.

In this paper we prove the equivalence of both approaches. We also obtain a
rank-$2$ property for the derivative of the tacnode kernel. Finally,
we find a Riemann-Hilbert expression for the multi-time extended tacnode
kernel.\smallskip

\textbf{Keywords}: resolvent operator, Fredholm determinant, Tracy-Widom
distribution, Rie\-mann-Hil\-bert problem, Lax pair, Painlev\'e~II equation,
Brownian motion, determinantal point process.
\end{abstract}

\section{Introduction}

Recently several papers appeared that study $n$ non-intersecting Brownian
motion paths with prescribed starting positions at time $t=0$ and ending
positions at time $t=1$, see
\cite{ADVV,AFvM11,AVV,DelKui2,DKZ,EO,FV,Joh11,KT1,KT3} among many others. If
$n\to\infty$ then the paths fill up a well-defined region in the time-space
plane. By fine-tuning the parameters, we may create a situation with two groups
of Brownian motions, located inside two touching ellipses in the time-space
plane: see the third picture of Figure~\ref{fig:3cases0}. We are interested in
the microscopic behavior of the paths near the touching point of the two
ellipses, i.e., near the tacnode.

\begin{figure}[t]
\begin{center}
\subfigure{\label{figlargesep0}}\includegraphics[scale=0.3]{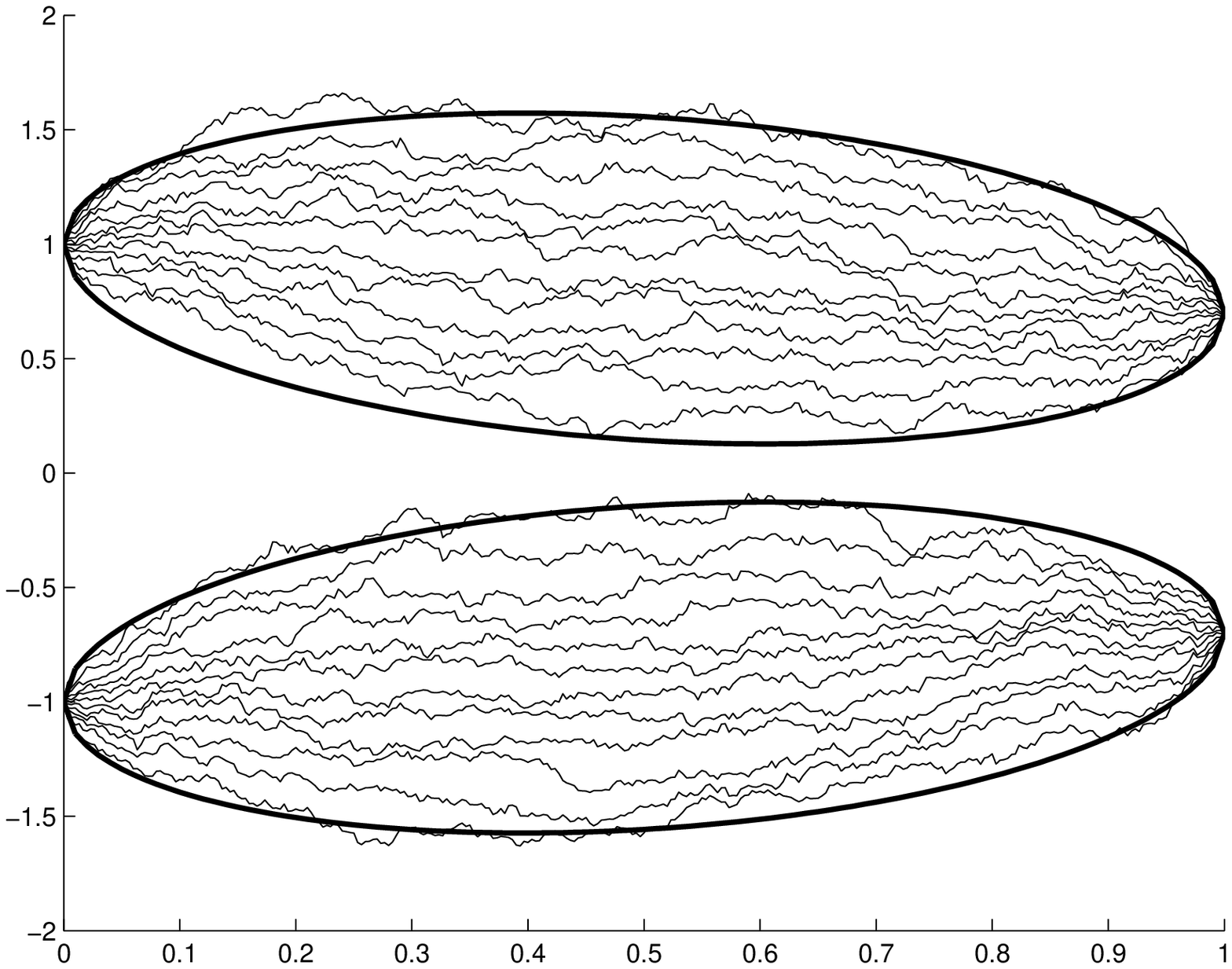}\hspace{5mm}
\subfigure{\label{figsmallsep0}}\includegraphics[scale=0.3]{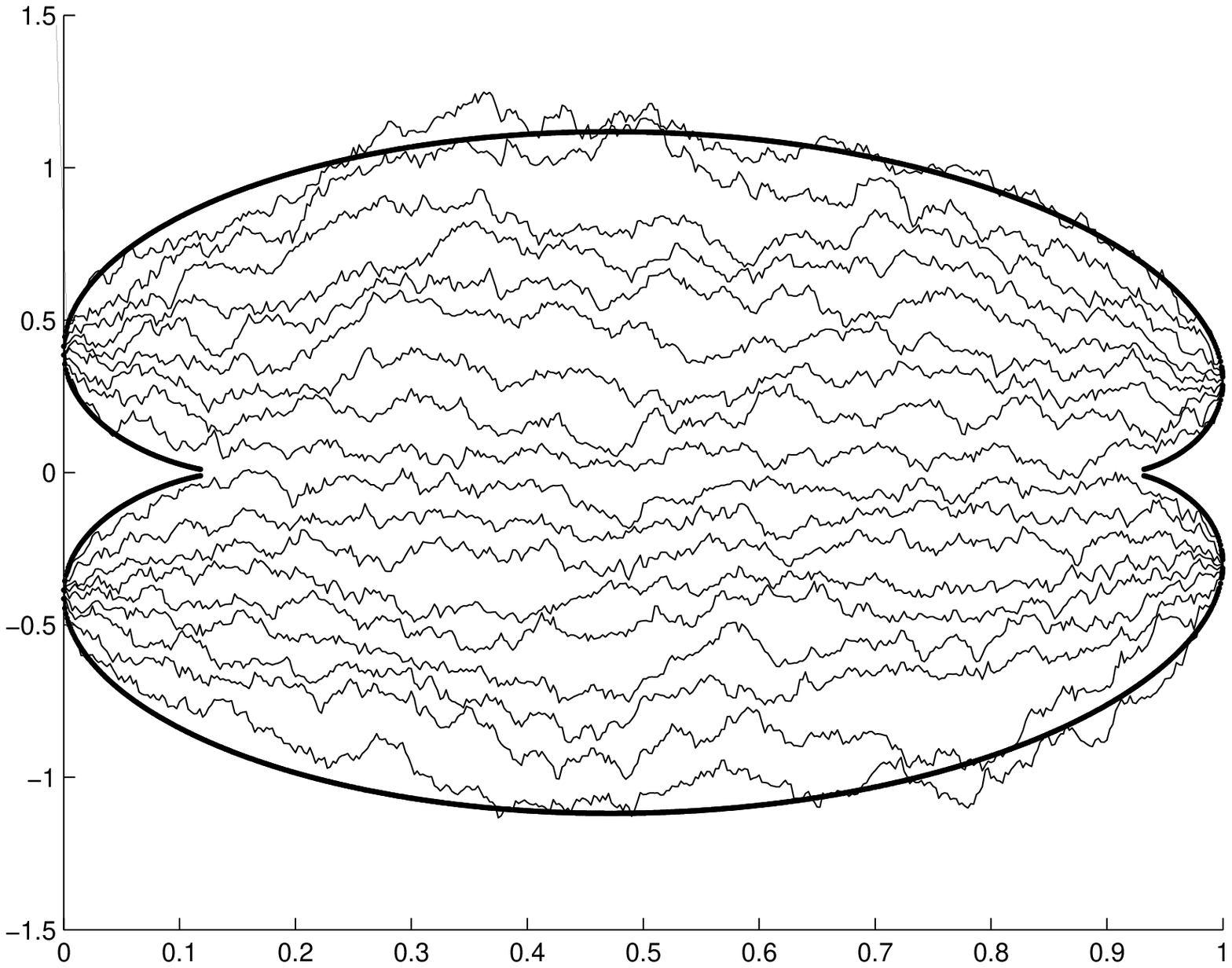}
\subfigure{\label{figcriticalsep0}}\includegraphics[scale=0.3]{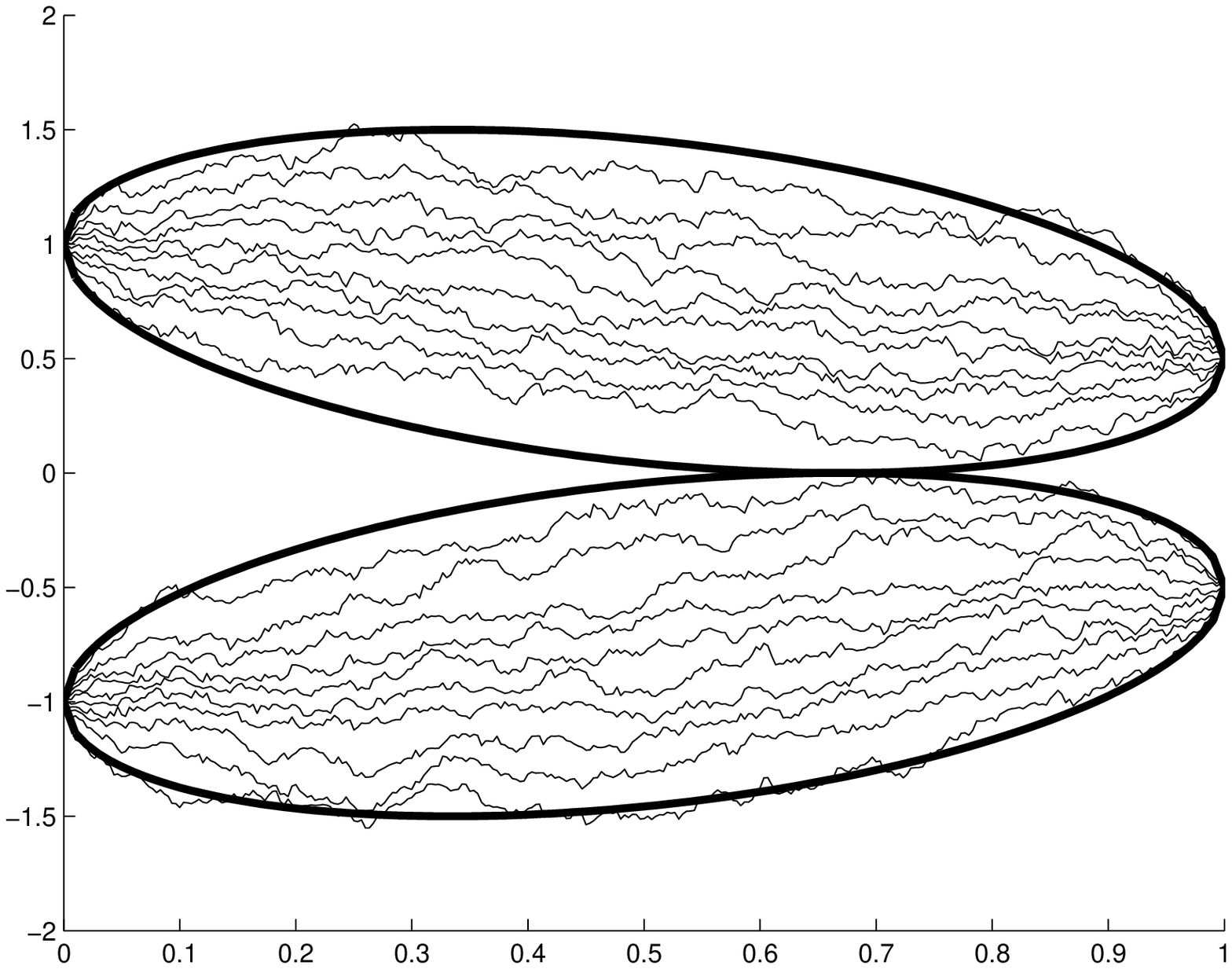}
\end{center}
\caption{$n=20$ non-intersecting Brownian motions at temperature $T=1$ with two
prescribed starting and two ending positions in the case of (a) large, (b)
small, and (c) critical separation between the endpoints. The horizontal axis
stands for the time, $t\in[0,1]$, and the vertical axis shows the positions of
the Brownian motions at time $t$. For $n\to\infty$ the Brownian motions fill a
prescribed region in the time-space plane which is bounded by the boldface
lines in the figure. In the case (c), the limiting support consists of two
touching ellipses which touch each other at a critical point which is a
tacnode.} \label{fig:3cases0}
\end{figure}

It is well-known that the positions of the Brownian motions at any fixed time
$t\in (0,1)$ form a determinantal point process. The process has a well-defined
limit for $n\to\infty$ in a microscopic neighborhood of the tacnode. The
limiting process is encoded by a two-variable correlation kernel $K\tac(x,y)$
which we call the \emph{tacnode kernel}. It depends parametrically on the
scaling that we use near the tacnode. There exists also a multi-time extended
version of the tacnode kernel \cite{AFvM11,AJvM,FV,Joh11}, to be discussed
in Section~\ref{subsection:multitime}.

The tacnode kernel  $K\tac(x,y)$ can be expressed using resolvents and Fredholm
determinants of the Airy integral operator acting on a semi-infinite interval
$[\sigma,\infty)$. This approach was followed in the symmetric case by Adler,
Ferrari, van Moerbeke \cite{AFvM11} and Johansson \cite{Joh11} and in the
non-symmetric case by Ferrari and Vet\H o \cite{FV}. Here the \lq symmetric
case\rq\ means that the two touching groups of Brownian motions have the same
size, as in Figure~\ref{fig:3cases0}, and the non-symmetric case means that one
group is bigger than the other. Similar methods were used to study a double
Aztec diamond \cite{AJvM}, see also \cite{ACJV}.

An alternative expression for the tacnode kernel can be obtained from the
Riemann-Hilbert method. This approach was followed by Kuijlaars, Zhang and the
author in \cite{DKZ}. In that paper we express the tacnode kernel in terms of a
$4\times 4$ matrix-valued Riemann-Hilbert problem (RH problem) $M(z)$, which
yields a new Lax pair representation for the Hastings-McLeod solution $q(x)$ to
the Painlev\'e~II equation. Recall that the \emph{Painlev\'e~II equation} is
the second-order, ordinary differential equation
\begin{equation}\label{def:Painleve2}
    q''(x) = xq(x)+2q^3(x),
\end{equation}
where the prime denotes the derivative with respect to $x$. The
\emph{Hastings-McLeod solution} \cite{FIKN,HML} is the special solution $q(x)$
of \eqref{def:Painleve2} that is real for real $x$ and satisfies
\begin{align}\label{def:HastingMcLeod}
q(x)\sim \Ai(x), & \qquad x\to +\infty,
\end{align}
with $\Ai$ the Airy function.  We note that the usual Riemann-Hilbert matrix
$\Psi(z)$ associated to the Painlev\'e~II equation, due to Flaschka and
Newell~\cite{FN}, has size $2\times 2$ rather than $4\times 4$.

The RH matrix $M(z)$ from the previous paragraph has been the topic of some
recent developments. It was used to study a new critical phenomenon in the
two-matrix model~\cite{DG}, and to establish a reduction from the tacnode
kernel to the Pearcey kernel~\cite{GZ}. It was also extended to a hard-edge
version of the tacnode~\cite{Delvaux2012}.

Summarizing, there exist several, apparently different, formulas for the
tacnode kernel $K\tac(x,y)$. It is natural to ask about the equivalence of
these formulas.

There is an interesting analogy with a model of non-intersecting Brownian
excursions, also known as watermelons (with a wall). The model consists of $n$
Brownian motion paths on the positive half-line with a reflecting or absorbing
wall at the origin. The paths are forced to start and end at the origin and the
interest lies in the maximum position $x_{\max}$ reached by the topmost path
during the time interval $t\in [0,1]$.

Recently, several results were obtained about the joint distribution of the
maximum position $x_{\max}$ and the maximizing time $t_{\max}$ for such
watermelons. One approach, due to Moreno-Quastel-Remenik \cite{MQR}, involves
resolvents and Fredholm determinants of the Airy operator acting on an interval
$[\si,\infty)$. Another approach, due to Schehr \cite{Sch}, involves the
$2\times 2$ Riemann-Hilbert matrix $\Psi(z)$ associated to the Hastings-McLeod
solution to the Painlev\'e~II equation \cite{FN,HML}. The equivalence of both
approaches has been established in a recent work of Baik-Liechty-Schehr
\cite{BLS}. Along the way they obtain Airy resolvent formulas for the entries
of the RH matrix $\Psi(z)$, see also \cite[Sec.~1.1.3]{Baik} for a similar
result.
\\

Inspired by the work of Baik-Liechty-Schehr \cite{BLS}, in this paper we obtain
Airy resolvent formulas for the $4\times 4$ RH matrix $M(z)$. This is the
content of Theorem~\ref{theorem:Airyformulas:FV} below. Our formulas will apply
to the entries in the first and second column of the RH matrix $M(z)$, where
$z$ lies in a sector around the positive imaginary axis.

In Theorem~\ref{theorem:identify:kernels} we will use these formulas to prove
the equivalence of the tacnode kernels in \cite{AFvM11,AJvM,FV,Joh11} and
\cite{DKZ} respectively. We also obtain a remarkable rank-$2$ property for the
derivative of the tacnode kernel, see Theorems~\ref{theorem:tacnodeder} and
\ref{theorem:tacnodeder:FV}.

As a byproduct, the rank-$2$ formula will yield a RH formula for the multi-time
extended tacnode kernel, see Section~\ref{subsection:multitime}. To the best of our knowledge, this is the first time
that a RH formula is obtained for a multi-time extended correlation kernel. See
\cite{BerCaf} for a different connection between RH problems and
multi-time extended point processes, at the level of gap probabilities.

\begin{remark}
Based on an earlier version of this paper, Kuijlaars~\cite{Kui34} has extended
our approach and obtained Airy resolvent formulas for the third and fourth
column of $M(z)$, with $z$ lying again in a sector around the positive
imaginary axis. The latter columns are relevant because they appear in a
critical correlation kernel for the 2-matrix model \cite{DG}.
\end{remark}

\begin{remark}
The paper \cite{Delvaux2012} discusses a hard-edge variant of the tacnode
kernel. The interaction with the hard edge is quantified by a certain parameter
$\alpha>-1$. In the special case $\alpha=0$, the hard edge tacnode kernel
involves the same RH matrix $M(z)$ as the one above and our results give an
alternative way of writing the kernel. It is an open problem to extend our
results to a general value of $\alpha$.
\end{remark}

\section{Statement of results}

\subsection{Definition of the tacnode kernel}

In this section we recall the two different definitions of the tacnode kernel
in the literature. We will denote them by $K\tac(u,v)$ and $\mathcal
L\tac(u,v)$ respectively.

\subsubsection{Definition of the kernel $K\tac$}

A first approach to define the tacnode kernel $K\tac(u,v)$ (in the single-time
case) is via a Riemann-Hilbert problem for a matrix $M(z)$ of size $4\times 4$.

We recall the RH problem from \cite{DKZ,DG}. Fix two numbers
$\varphi_1,\varphi_2$ such that
\begin{equation}\label{def:angles}
0<\varphi_1<\varphi_2<\pi/3.
\end{equation}
Define the half-lines $\Gamma_k$, $k=0,\ldots,9$, by
\begin{equation}\label{def:rays1}
\Gamma_0=\er_+,\quad \Gamma_1=e^{i\varphi_1}\er_+,\quad
\Gamma_2=e^{i\varphi_2}\er_+,\quad \Gamma_3=e^{i(\pi-\varphi_2)}\er_+,\quad
\Gamma_4=e^{i(\pi-\varphi_1)}\er_+,
\end{equation}
and
\begin{equation}\label{def:rays2}
\Gamma_{5+k}=-\Gamma_k,\qquad k=0,\ldots,4.
\end{equation}
All rays $\Gamma_k$, $k=0,\ldots,9$, are oriented towards infinity, as shown in
Figure~\ref{fig:modelRHP}. We denote by $\Omega_k$ the region in $\cee$ that
lies between the rays $\Gamma_k$ and $\Gamma_{k+1}$, for $k=0,\ldots,9$, where
we identify $\Gamma_{10}:=\Gamma_0$.

We consider the following RH problem.

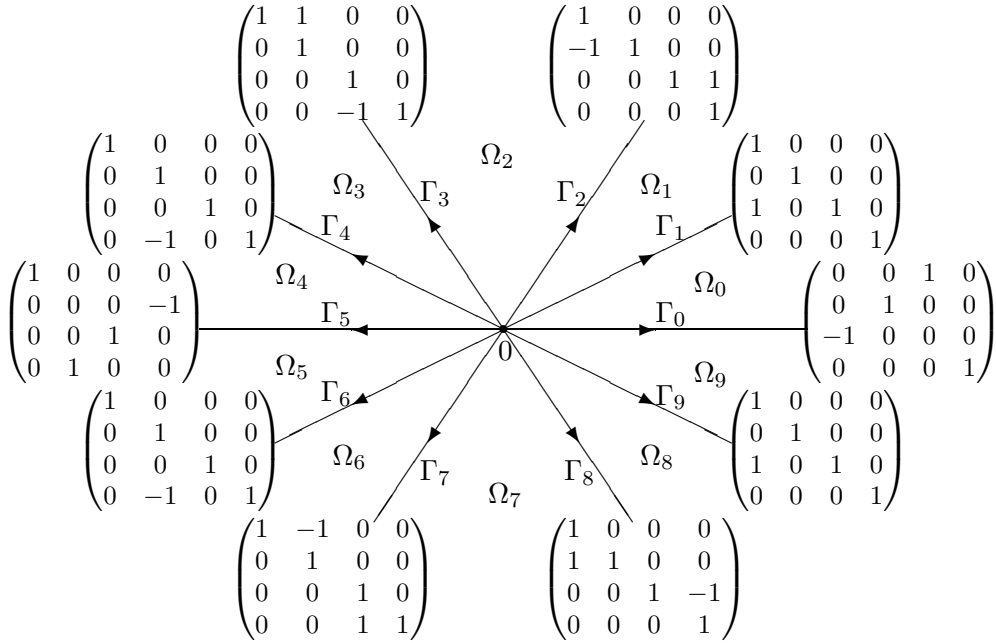
\begin{figure}[t]
\vspace{14mm}
\begin{center}
   \setlength{\unitlength}{1truemm}
   \begin{picture}(100,70)(-5,2)
       \put(40,40){\line(1,0){40}}
       \put(40,40){\line(-1,0){40}}
       \put(40,40){\line(2,1){30}}
       \put(40,40){\line(2,-1){30}}
       \put(40,40){\line(-2,1){30}}
       \put(40,40){\line(-2,-1){30}}
       \put(40,40){\line(2,3){18.5}}
       \put(40,40){\line(2,-3){17}}
       \put(40,40){\line(-2,3){18.5}}
       \put(40,40){\line(-2,-3){17}}
       \put(40,40){\thicklines\circle*{1}}
       \put(39.3,36){$0$}
       \put(60,40){\thicklines\vector(1,0){.0001}}
       \put(20,40){\thicklines\vector(-1,0){.0001}}
       \put(60,50){\thicklines\vector(2,1){.0001}}
       \put(60,30){\thicklines\vector(2,-1){.0001}}
       \put(20,50){\thicklines\vector(-2,1){.0001}}
       \put(20,30){\thicklines\vector(-2,-1){.0001}}
       \put(50,55){\thicklines\vector(2,3){.0001}}
       \put(50,25){\thicklines\vector(2,-3){.0001}}
       \put(30,55){\thicklines\vector(-2,3){.0001}}
       \put(30,25){\thicklines\vector(-2,-3){.0001}}

       \put(60,41){$\Gamma_0$}
       \put(60,52.5){$\Gamma_1$}
       \put(47,57){$\Gamma_2$}
       \put(29,57){$\Gamma_3$}
       \put(16,52.5){$\Gamma_4$}
       \put(16,41){$\Gamma_5$}
       \put(16,30.5){$\Gamma_6$}
       \put(29,20){$\Gamma_7$}
       \put(48,20){$\Gamma_8$}
       \put(60,30){$\Gamma_{9}$}

       \put(65,45){$\small{\Omega_0}$}
       \put(58,58){$\small{\Omega_1}$}
       \put(37,62){$\small{\Omega_2}$}
       \put(17.5,58){$\small{\Omega_3}$}
       \put(10,46){$\small{\Omega_4}$}
       \put(10,34){$\small{\Omega_5}$}
       \put(17.5,22){$\small{\Omega_6}$}
       \put(38,17){$\small{\Omega_7}$}
       \put(58,22){$\small{\Omega_8}$}
       \put(65,33){$\small{\Omega_{9}}$}

       \put(78.5,40){$\small{\begin{pmatrix}0&0&1&0\\ 0&1&0&0\\ -1&0&0&0\\ 0&0&0&1 \end{pmatrix}}$}
       \put(69,57){$\small{\begin{pmatrix}1&0&0&0\\ 0&1&0&0\\ 1&0&1&0\\ 0&0&0&1 \end{pmatrix}}$}
       \put(45,74){$\small{\begin{pmatrix}1&0&0&0\\ -1&1&0&0\\ 0&0&1&1\\ 0&0&0&1 \end{pmatrix}}$}
       \put(4,74){$\small{\begin{pmatrix}1&1&0&0\\ 0&1&0&0\\ 0&0&1&0\\ 0&0&-1&1 \end{pmatrix}}$}
       \put(-16,57){$\small{\begin{pmatrix}1&0&0&0\\ 0&1&0&0\\ 0&0&1&0\\ 0&-1&0&1\end{pmatrix}}$}
       \put(-26,40){$\small{\begin{pmatrix}1&0&0&0\\ 0&0&0&-1\\ 0&0&1&0\\ 0&1&0&0 \end{pmatrix}}$}
       \put(-16,23){$\small{\begin{pmatrix}1&0&0&0\\ 0&1&0&0\\ 0&0&1&0\\ 0&-1&0&1 \end{pmatrix}}$}
       \put(4,6){$\small{\begin{pmatrix}1&-1&0&0\\ 0&1&0&0\\ 0&0&1&0\\ 0&0&1&1 \end{pmatrix}}$}
       \put(45,6){$\small{\begin{pmatrix}1&0&0&0\\ 1&1&0&0\\ 0&0&1&-1\\ 0&0&0&1 \end{pmatrix}}$}
       \put(69,23){$\small{\begin{pmatrix}1&0&0&0\\ 0&1&0&0\\ 1&0&1&0\\ 0&0&0&1\end{pmatrix}}$}

  \end{picture}
   \vspace{0mm}
   \caption{The figure shows the jump contours $\Gamma_k$ in the complex $z$-plane and the corresponding jump
   matrix
   $J_k$ on $\Gamma_k$, $k=0,\ldots,9$, in the RH problem for  $M = M(z)$. We denote by $\Omega_k$ the region between
    the rays $\Gamma_k$ and $\Gamma_{k+1}$.}
   \label{fig:modelRHP}
\end{center}
\end{figure}

\begin{rhp}\label{rhp:modelM} We look for a matrix valued function
$M:\cee\setminus\left(\bigcup_{k=0}^{9}\Gamma_k\right)\to\cee^{4\times 4}$
(which also depends on the parameters $r_1,r_2>0$ and $s_1,s_2,\tau\in\cee$)
satisfying
\begin{itemize}
\item[(1)] $M(z)$ is analytic (entrywise) for $z\in\cee\setminus\left(\bigcup_{k=0}^{9}
\Gamma_k\right)$.
\item[(2)] For $z\in\Gamma_k$, the limiting values
\[ M_+(z) = \lim_{x \to z, \,  x\textrm{ on $+$-side of }\Gamma_k} M(x), \qquad
    M_-(z) = \lim_{x \to z, \, x\textrm{ on $-$-side of }\Gamma_k} M(x) \]
exist, where the $+$-side and $-$-side of $\Gamma_k$ are the sides which lie on
the left and right of $\Gamma_k$, respectively, when traversing $\Gamma_k$
according to its orientation. These limiting values satisfy the jump relation
\begin{equation}\label{jumps:M}
M_{+}(z) = M_{-}(z)J_k(z),\qquad k=0,\ldots,9,
\end{equation}
where the jump matrix $J_k(z)$ for each ray $\Gamma_k$ is shown in
Figure~\ref{fig:modelRHP}.
\item[(3)] As $z\to\infty$ we have
\begin{multline}
\label{M:asymptotics} M(z) =
\left(I+\frac{M_1}{z}+\frac{M_2}{z^2}+O\left(\frac{1}{z^3}\right)\right)
\diag((-z)^{-1/4},z^{-1/4},(-z)^{1/4},z^{1/4})
\\
\times
\Aa\diag\left(e^{\theta_1(z)},e^{\theta_2(z)},e^{\theta_3(z)},e^{\theta_4(z)}\right),
\end{multline}
where the coefficient matrices $M_1,M_2,\ldots$ are independent of $z$, and
with
\begin{equation}\label{mixing:matrix}
\Aa:=\frac{1}{\sqrt{2}}\begin{pmatrix} 1 & 0 & -i & 0 \\
0 & 1 & 0 & i \\
-i & 0 & 1 & 0 \\
0 & i & 0 & 1 \\
\end{pmatrix},
\end{equation}
and
\begin{equation}\label{def:theta1}
\left\{\begin{array}{lll} \theta_1(z) =
-\frac{2}{3}r_1(-z)^{3/2}&\hspace{-3mm}-2s_1(-z)^{1/2}&\hspace{-3mm}+r_1^2\tau z, \\
\theta_2(z) = -\frac{2}{3}r_2z^{3/2}&\hspace{-3mm}-2s_2z^{1/2}&\hspace{-3mm}-r_2^2\tau z,\\
\theta_3(z) =
\hphantom{-}\frac{2}{3}r_1(-z)^{3/2}&\hspace{-3mm}+2s_1(-z)^{1/2}&\hspace{-3mm}+r_1^2\tau z, \\
\theta_4(z) =
\hphantom{-}\frac{2}{3}r_2z^{3/2}&\hspace{-3mm}+2s_2z^{1/2}&\hspace{-3mm}-r_2^2\tau
z.\end{array}\right.
\end{equation}
Here we use the principal branches of the fractional powers.
\item[(4)] $M(z)$ is bounded as $z\to 0$.
\end{itemize}
\end{rhp}

We will sometimes write $M(z)=M(z;r_1,r_2,s_1,s_2,\tau)$ to indicate the
dependence on the parameters. The factors $r_1^2$ and $r_2^2$ in front of $\tau z$ in \eqref{def:theta1}
will be useful in the statement of our main theorems; these factors could be removed by a scaling and translation
of the RH matrix. We could also assume $r_2=1$ without loss of
generality by a simple rescaling of $z$.

The RH problem~\ref{rhp:modelM} was introduced in \cite{DKZ} with $\tau=0$ in
\eqref{def:theta1}. The parameter $\tau$ was introduced in \cite{DG} in the
symmetric setting where $r_1=r_2=1$ and $s_1=s_2$. The general non-symmetric
case with the extra parameter $\tau$ has not been considered before in the
literature. The following result can be proved as in \cite{DG}; see also the
discussion following Lemma~\ref{lemma:DG:2} below.

\begin{proposition}\label{prop:solv} (Solvability.)
For any $r_1,r_2>0$ and $s_1,s_2,\tau\in\er$ there is a unique solution
$M(z)=M(z;r_1,r_2,s_1,s_2,\tau)$ to the RH problem~\ref{rhp:modelM}.
\end{proposition}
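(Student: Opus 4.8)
The plan is to follow the standard vanishing-lemma/Fredholm-alternative strategy for Riemann-Hilbert problems, which proves uniqueness and existence simultaneously once a suitable symmetry is available. First I would establish \emph{uniqueness}: if $M$ and $\widetilde M$ are two solutions, then $N:=M\widetilde M^{-1}$ is analytic across each $\Gamma_k$ (the jump matrices $J_k$ cancel since they are shared), has no singularity at $z=0$ (both factors are bounded there and $\det\widetilde M$ is constant — this needs the observation that each $J_k$ has determinant $1$, so $\det M$ is entire, bounded, and tends to $1$, hence $\det M\equiv 1$ and $\widetilde M^{-1}$ is bounded near $0$), and from \eqref{M:asymptotics} satisfies $N(z)=I+O(1/z)$ as $z\to\infty$ because the diagonal and $\Aa$ and exponential factors all cancel between $M$ and $\widetilde M$. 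By Liouville, $N\equiv I$.

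For \emph{existence}, the cleanest route is to cite \cite{DG}, as the proposition invites, and explain the (minor) new ingredients. In \cite{DG} existence was obtained in the symmetric case $r_1=r_2=1$, $s_1=s_2$, with the parameter $\tau$; here I would argue that the general case $r_1,r_2>0$, $s_1,s_2\in\er$ (with $\tau\in\er$) introduces nothing genuinely new. One option is a direct reduction: a rescaling $z\mapsto c\,z$ with suitable $c>0$ together with absorbing constants into $M_1,M_2,\dots$ lets us set $r_2=1$ (as noted in the text), and then one checks that the remaining parameters enter the jump matrices not at all — they only modify the $\theta_j$ in the asymptotic condition (3) — so the contour and jumps are \emph{exactly} those of \cite{DKZ,DG}, while the $\theta_j$ still have the same structure (a $(-z)^{3/2}$ or $z^{3/2}$ term, a square-root term, and a linear term) that the analysis in \cite{DG} was built to handle. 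The standard machinery then applies verbatim: transform the RH problem to one with jumps exponentially close to $I$ and normalization $I+O(1/z)$, recast it as a singular integral equation on the contour, observe the associated operator is Fredholm of index zero (a consequence of the algebraic structure of the jumps and the fact that $\det J_k=1$), and invoke uniqueness — already proved above — to kill the kernel, yielding invertibility and hence a solution.

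The main obstacle is making the "as in \cite{DG}" step genuinely airtight rather than a hand-wave: one must confirm that the Fredholm/index-zero argument there did not secretly use the symmetry $r_1=r_2$, $s_1=s_2$, and that the behavior of the new $\theta_j$ at $z=0$ and at $z=\infty$ (in every sector $\Omega_k$) still produces jumps decaying to $I$ after the usual conjugations. Concretely I would verify that in each sector the relevant differences $\Re(\theta_i-\theta_j)$ have the correct sign pattern along the rays $\Gamma_k$ — this is a finite check using \eqref{def:theta1} and \eqref{def:angles}, and the constraint $0<\varphi_1<\varphi_2<\pi/3$ is exactly what guarantees it, independently of the precise values of $r_1,r_2,s_1,s_2$. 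Once that sign analysis is in place, the local parametrix at $0$ (condition (4) is just boundedness, so no Bessel/Airy parametrix is needed — a bounded holomorphic prefactor suffices) and the global estimates go through unchanged, and the proof is complete. I would also remark that the hypothesis $\tau\in\er$ (rather than $\tau\in\cee$) is used only to keep $\theta_j$ real-analytically controlled along the rays; it matches the hypothesis under which \cite{DG} worked.
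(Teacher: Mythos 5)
Your uniqueness argument is fine: $\det J_k\equiv 1$ on every ray, so $\det M\equiv 1$, $\widetilde M^{-1}$ is bounded where $\widetilde M$ is, the exponential/quarter-root/$\Aa$ factors cancel in $M\widetilde M^{-1}$, and Liouville gives $M\equiv\widetilde M$. This part matches the standard argument.

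The existence part, however, has a genuine gap: the step ``observe the operator is Fredholm of index zero and invoke uniqueness --- already proved above --- to kill the kernel'' is circular. The kernel of the associated singular integral operator corresponds to solutions of the \emph{homogeneous} (vanishing) RH problem, i.e.\ solutions with decay $O(1/z)$ in place of $I+O(1/z)$ after the normalizing transformations. Uniqueness of the normalized problem only rules out such solutions \emph{if a solution of the normalized problem is already known to exist}: one adds the vanishing solution to it and contradicts uniqueness. Without existence in hand, a nontrivial kernel is perfectly consistent with uniqueness (it is exactly the non-solvable situation, since index zero then forces the range to be a proper subspace). What is needed is a genuine vanishing lemma proved directly --- typically via a symmetry/positivity structure of the jump matrices (e.g.\ a relation of the type used in \cite{DKZ} for $\tau=0$, where reality of $s_1,s_2$ enters essentially). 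You gesture at ``once a suitable symmetry is available'' but never identify or use one, and for this $4\times 4$ problem with general $r_1,r_2,s_1,s_2,\tau$ that symmetry is not free; it is the heart of the matter, not a routine check. Relatedly, your closing remark that $\tau\in\er$ is needed only for ``real-analytic control'' of the $\theta_j$ understates its role: reality of the parameters is what makes the positivity/vanishing argument (or the pole-free Hastings--McLeod deformation below) work.

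Note also that the paper's own route is different: it does not run a Fredholm/vanishing-lemma argument for general parameters, but follows \cite{DG}, i.e.\ it anchors solvability at $\tau=0$ (known from \cite{DKZ}) and extends to all real $\tau$ using the Lax system of Section~\ref{subsection:Lax} (the equation $\frac{\pp}{\pp\tau}M=WM$, whose coefficients are explicit Painlev\'e~II quantities evaluated at $\si$, pole-free on the real line). If you want to keep your direct approach, you must supply the vanishing lemma for the nonsymmetric jump configuration; otherwise the deformation-in-$\tau$ argument is the shorter path to repair the proof.
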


Now we define the tacnode kernel. Let $r_1,r_2>0$ and $s_1,s_2,\tau\in\er$ be
fixed parameters. Let $\what M(z)$ be the restriction of $M(z)$ to the sector
$z\in\Omega_2$ around the positive imaginary axis. We extend $\what M(z)$ to
the whole complex $z$-plane by analytic continuation. This analytic
continuation is well-defined, since the product $J_3J_4\cdots J_9J_0J_1J_2$ of
the jump matrices in the RH problem~\ref{rhp:modelM} is the identity matrix.

The tacnode kernel $K\tac(u,v)$ is defined in terms of the RH matrix $\what
M(z)$ by \cite[Def.~2.6]{DKZ}\footnote{We note that \cite[Def.~2.6]{DKZ} has a
typo: it has \lq $\what M^{-1}(u) \what M(v)$\rq\ instead of \lq $\what
M^{-1}(v) \what M(u)$\rq.}
\begin{equation}\label{tacnode:DKZ} K\tac(u,v) = \frac{1}{2\pi i(u-v)}
\begin{pmatrix} 0&0&1&1\end{pmatrix}
\what M^{-1}(v) \what M(u)
\begin{pmatrix} 1\\1\\0\\0\end{pmatrix}.
\end{equation}

For later use, it is convenient to denote by
\begin{equation}\label{pee:firsttwocolumns}\pee(z) = \what
M(z)\begin{pmatrix}1\\1\\0\\0\end{pmatrix}\in\cee^{4\times 1} \end{equation} the
sum of the first and second column of $\what M(z)$. Observe that
\eqref{tacnode:DKZ} and \eqref{pee:firsttwocolumns} both depend on the
parameters $r_1,r_2,s_1,s_2,\tau$.

\subsubsection{Definition of the kernel $\mathcal L\tac$}

In this section we recall the second way to define the tacnode kernel, via Airy
resolvents \cite{AFvM11,AJvM,FV,Joh11}. We will denote this kernel by $\mathcal
L\tac(u,v)$.

Denote by $\Ai(x)$ the standard Airy function and by
\begin{equation}\label{Airy:kernel:0} K_{\Ai}(x,y)
= \frac{\Ai(x)\Ai'(y)-\Ai'(x)\Ai(y)}{x-y}=\int_{0}^{\infty} \Ai(x+z)\Ai(y+z)\ud
z
\end{equation}
the Airy kernel. For $\si\in\er$ let
\begin{equation}\label{Airy:kernel} K_{\Ai,\si}(x,y)
= \int_{0}^{\infty} \Ai(x+z+\si)\Ai(y+z+\si)\ud z
\end{equation}
be the Airy kernel shifted by $\si$. Let $\Kaa_{\Ai,\si}$ be the integral
operator with kernel $K_{\Ai,\si}$ acting on the function space
$L^2([0,\infty))$.
The action of the operator $\Kaa_{\Ai,\si}$ on the function $f$ is defined by
$$ [\Kaa_{\Ai,\si} f]\ (x) = \int_{0}^{\infty} K_{\Ai,\si}(x,y)f(y)\ud y.
$$
Define the resolvent operator $\Rop_{\Ai,\si}$ on $L^2([0,\infty))$ by
\begin{equation}\label{resolvent:def} \Rop_{\Ai,\si} := (\Een-\Kaa_{\Ai,\si})^{-1} - \Een =
\Kaa_{\Ai,\si}(\Een-\Kaa_{\Ai,\si})^{-1} =
(\Een-\Kaa_{\Ai,\si})^{-1}\Kaa_{\Ai,\si},
\end{equation}
where $\Een$ stands for the identity operator on $L^2([0,\infty))$. It is known
that $\Rop_{\Ai,\si}$ is again an integral operator on $L^2([0,\infty))$ and we
denote its kernel by $R_{\si}(x,y)$:
\begin{equation}\label{resolvent:Rxy}
[\Rop_{\Ai,\si} f]\ (x) = \int_{0}^{\infty} R_{\si}(x,y)f(y)\ud y.
\end{equation}
We will sometimes use the notation
\begin{equation}\label{Dirac:delta}
(\Een-\Kaa_{\Ai,\si})^{-1}(x,y) \equiv (\Een+\Rop_{\Ai,\si})(x,y) :=
\delta(x-y)+R_{\si}(x,y),
\end{equation}
with $\delta(x-y)$ the Dirac delta function at $x=y$ and $R_{\si}$ the Airy
resolvent kernel \eqref{resolvent:Rxy}. We will often use the symmetry of the
kernel, $R_{\si}(x,y)=R_{\si}(y,x)$. Finally, we will abbreviate $R_{\si}$ by
$R$ if the value of $\si$ is clear from the context.
\\

In a series of papers \cite{AFvM11,AJvM,FV,Joh11}, Adler, Ferrari, Johansson,
van Moerbeke and Vet\H o study the tacnode problem using Airy resolvent
expressions. We focus in particular on the paper by Ferrari-Vet\H o \cite{FV}
on the non-symmetric tacnode. Hence the two touching groups of Brownian motions
at the tacnode are allowed to have a different size. The paper \cite{FV} uses a
parameter $\lam>0$ that quantifies the amount of asymmetry, with $\lam=1$
corresponding to the symmetric case treated in \cite{AFvM11,AJvM,Joh11}. These
papers also use a parameter $\si>0$ that controls the strength of interaction
between the two groups of Brownian motions near the tacnode. In the present
paper we will denote the latter parameter with a capital $\Sigma$. The
parameter $\Sigma$ has a similar effect on the tacnode kernel as the
temperature parameter used in \cite{Delvaux2012,DelKui2} (suitably rescaled).
In order to be consistent with \cite{DKZ}, we will use the notation $\si$ to
denote
\begin{equation}\label{temperature:sigma} \si  = \lam^{1/2}(1+\lam^{-1/2})^{2/3}\Sigma.
\end{equation}
(What we call $\si$ was called $\wtil\si$ in \cite{AFvM11,AJvM,FV,Joh11}.) The
papers \cite{AFvM11,AJvM,FV,Joh11} consider a multi-time extended tacnode
kernel with time variables $\tau_1,\tau_2$. We restrict ourselves here to the
single time case $\tau_1=\tau_2=:\tau$. The discussion of the multi-time case is
postponed to Section~\ref{subsection:multitime}.

With the above notations, define the functions \cite{FV}\footnote{The notations
$b_{\tau,\si+\xi}^{\lam}(x+\wtil\si)$ and
$b_{\lam^{1/3}\tau,\lam^{2/3}\si-\lam^{1/6}\xi}^{\lam^{-1}}(x+\wtil\si)$ in
\cite{FV} correspond to our notations $\lam^{1/6}\wtil b_{-\tau,\xi}(x)$ and
$\lam^{-1/6}b_{-\tau,\xi}(x)$ respectively.}
\begin{equation}
\label{b:FV} \begin{array}{ll} b_{\tau,z}(x) = \exp\left(-\tau
y+\tau^3/3\right)\Ai(y), &\textrm{with }y:=z+Cx+\Sigma+\tau^2,
\\
\wtil b_{\tau,z}(x) = \exp\left(-\sqrt{\lam}\tau \wtil
y+\lam\tau^3/3\right)\Ai(\lam^{1/6}\wtil y),&\textrm{with }\wtil
y:=-z+Cx+\sqrt{\lam}(\Sigma+\tau^2),\end{array}
\end{equation}
where \begin{equation} \label{C:FV0} C = (1+\lam^{-1/2})^{1/3}.
\end{equation}
Note that in the symmetric case $\lam=1$, we have $\wtil
b_{\tau,z}(x)=b_{\tau,-z}(x)$. The functions \eqref{b:FV} also depend on
$\lam,\si$ (recall \eqref{temperature:sigma}) but we do not show this in the
notation. Next, we define the functions
\begin{equation}\label{Acal:FV} \begin{array}{l} \displaystyle\Aa_{\tau,z}(x) = b_{\tau,z}(x)
-\lam^{1/6}\int_{0}^{\infty} \Ai(x+y+\si)\wtil b_{\tau,z}(y) \ud y
\\ \displaystyle \wtil\Aa_{\tau,z}(x) =
\wtil b_{\tau,z}(x)- \lam^{-1/6}\int_{0}^{\infty} \Ai(x+y+\si)b_{\tau,z}(y)\ud
y.\end{array}
\end{equation}
Again we have $\wtil\Aa_{\tau,z}(x) = \Aa_{\tau,-z}(x)$ in the symmetric case
$\lam=1$, and we suppress the dependence on $\lam,\si$ from the notation.\\

We are now ready to introduce the tacnode kernel $\mathcal L\tac(u,v)=\mathcal
L\tac(u,v;\si,\tau)$ of Ferrari-Vet\H o \cite{FV}\footnote{This kernel is
called $\mathcal L\tac^{\lam,\si}(\tau_1,\xi_1,\tau_2,\xi_2)$ in \cite{FV},
with $\xi_1=u$, $\xi_2=v$ and $\tau_1=\tau_2=\tau$. Recall that we use $\si$ in
a different meaning.}, restricted to the single-time case $\tau_1=\tau_2=\tau$.
The kernel can be represented in several equivalent ways. We find it convenient
to use the following representation.

\begin{proposition}\label{prop:FVkernel} (The tacnode kernel.)
Fix $\lam>0$. The tacnode kernel $\mathcal L\tac(u,v)$ of Ferrari-Vet\H o
\cite{FV} in the single-time case $\tau_1=\tau_2=\tau$ can be written in the
form
\begin{multline}\label{tacnode:FV}
\mathcal L\tac(u,v;\si,\tau) =
C\lam^{1/3}\int_0^{\infty}\wtil b_{\tau,u}(x)\wtil b_{-\tau,v}(x)\ud x
 \\
+C\int_0^{\infty}\!\!\!\int_0^{\infty}(\Een-\Kaa_{\Ai,\si})^{-1}(x,y)
\Aa_{\tau,u}(x)\Aa_{-\tau,v}(y)\ud x\ud y,
\end{multline}
with the notations \eqref{Airy:kernel} and
\eqref{Dirac:delta}--\eqref{Acal:FV}.
\end{proposition}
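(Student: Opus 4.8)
The plan is to obtain \eqref{tacnode:FV} by a direct, if somewhat tedious, rewriting of one of the equivalent formulas for the tacnode kernel established by Ferrari and Vet\H o in \cite{FV}. First I would recall their formula verbatim and specialize it to $\tau_1=\tau_2=\tau$. In that specialization the Gaussian (heat-kernel) term that is present in the general multi-time kernel for $\tau_1\neq\tau_2$ drops out, and one is left with the sum of an inner-product term of the $b^{\lam}$-type functions on the shifted half-line $[\wtil\si,\infty)$ and a term involving the resolvent $(\Een-\Kaa_{\Ai})^{-1}$ of the Airy operator on the same half-line, paired against the functions obtained from $b^{\lam}$ and $b^{\lam^{-1}}$ by subtracting an Airy convolution.

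Next I would apply the notational dictionary collected in the footnotes to \eqref{b:FV} and \eqref{Acal:FV}: the identities $b^{\lam}_{\tau,\si+\xi}(x+\wtil\si)=\lam^{1/6}\wtil b_{-\tau,\xi}(x)$ and $b^{\lam^{-1}}_{\lam^{1/3}\tau,\lam^{2/3}\si-\lam^{1/6}\xi}(x+\wtil\si)=\lam^{-1/6}b_{-\tau,\xi}(x)$, together with the translation $\wtil\si\mapsto\si$ of \eqref{temperature:sigma} and the value of $C$ in \eqref{C:FV0}. Performing the change of variables $x\mapsto x+\si$ (equivalently, conjugating the integral operator by this translation) moves the Airy operator from $L^2([\wtil\si,\infty))$ to $L^2([0,\infty))$ and replaces the standard Airy kernel by the shifted kernel $K_{\Ai,\si}$ of \eqref{Airy:kernel}; the resolvent kernels are related by the same translation because the two operators are unitarily equivalent. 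Under these substitutions the Ferrari--Vet\H o combinations $b^{\lam}-(\text{Airy})\ast b^{\lam^{-1}}$ become, up to the displayed powers of $\lam$, exactly $\Aa_{\tau,u}$ and $\Aa_{-\tau,v}$ as in \eqref{Acal:FV}, and the $b^{\lam}$-inner product becomes $\int_0^{\infty}\wtil b_{\tau,u}(x)\wtil b_{-\tau,v}(x)\ud x$.

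Finally I would collect the scalar prefactors. The factors $\lam^{1/6}$ and $\lam^{-1/6}$ produced by the two $\Aa$-type functions, together with the Jacobian-free translation of the domain, combine with the constant $C$ to give precisely the $C\lam^{1/3}$ in front of the first term of \eqref{tacnode:FV} and the $C$ in front of the double integral; the sign flip $\tau\mapsto-\tau$ in the dictionary accounts for the arguments $-\tau,v$ appearing in the second factor, and expanding $(\Een-\Kaa_{\Ai,\si})^{-1}$ via \eqref{Dirac:delta} exhibits the diagonal term $\int\Aa_{\tau,u}\Aa_{-\tau,v}$ plus the genuine resolvent contribution.

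The main obstacle is entirely bookkeeping: faithfully reproducing the exact Ferrari--Vet\H o formula (their paper records several equivalent forms that differ by integration by parts and by use of the resolvent identities \eqref{resolvent:def}) and tracking the web of scaling factors $\lam^{\pm1/6},\lam^{1/3},C$ and the shift $\wtil\si$ without error. A secondary point that deserves an explicit sentence is the unitary-equivalence argument justifying the passage from the Airy resolvent on $[\wtil\si,\infty)$ to $\Rop_{\Ai,\si}$ on $[0,\infty)$; beyond the well-definedness of the Airy resolvent already recalled above, no analytic input is needed.
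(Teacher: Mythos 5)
Your dictionary-and-rescaling steps are fine and agree with what the paper does through its footnotes: the identifications of $b^{\lam}$, $b^{\lam^{-1}}$ with $\lam^{\pm 1/6}\wtil b_{-\tau,\xi}$, $\lam^{\mp 1/6} b_{-\tau,\xi}$, the shift of the half-line that replaces the Airy operator on $[\wtil\si,\infty)$ by $\Kaa_{\Ai,\si}$ on $L^2([0,\infty))$, the disappearance of the heat-kernel term when $\tau_1=\tau_2$, and the tracking of the constants $C$, $\lam^{1/3}$.

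The gap is in your description of the starting point, and it hides the actual content of the proof. The Ferrari--Vet\H o formula \cite[Eq.~(5)]{FV}, once translated, is \emph{not} of the two-term shape you describe: as recorded in \eqref{FVkernel:long} it is a sum of six terms in which the resolvent $(\Een-\Kaa_{\Ai,\si})^{-1}$ is paired against the \emph{Airy-transformed} functions $[\vecA_{\si}b_{\tau,u}]$, $[\vecA_{\si}\wtil b_{\tau,u}]$ (the $B$-functions of \cite{FV}), together with a plain inner product $\int_0^{\infty}b_{\tau,u}(x)b_{-\tau,v}(x)\ud x$; the combinations $\Aa_{\tau,u}$, $\Aa_{-\tau,v}$ of \eqref{Acal:FV} do not appear in it. Passing to \eqref{tacnode:FV} is therefore not mere substitution and prefactor bookkeeping: one needs the operator identities $\vecA_{\si}^2=\Kaa_{\Ai,\si}$ and $\vecA_{\si}(\Een-\Kaa_{\Ai,\si})^{-1}\vecA_{\si}=(\Een-\Kaa_{\Ai,\si})^{-1}-\Een$, the commutation of $\vecA_{\si}$ with the resolvent to move one Airy transform from $b_{\tau,u}$ onto $\wtil b_{-\tau,v}$ in the cross terms, and the cancellation of the resulting term $-\int_0^{\infty}b_{\tau,u}b_{-\tau,v}$ against the first term of \eqref{FVkernel:long}; only after this regrouping do the four surviving resolvent terms factor as $(\Een-\Kaa_{\Ai,\si})^{-1}$ tested against $\Aa_{\tau,u}\otimes\Aa_{-\tau,v}$. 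This regrouping is exactly what the paper's proof in Section~\ref{subsection:proof:FVkernel} carries out, whereas your proposal asserts that beyond the unitary equivalence \lq no analytic input is needed\rq\ and that the FV combinations already \lq become exactly\rq\ the $\Aa$'s under substitution --- that claim is false for \cite[Eq.~(5)]{FV}, and indeed the representation \eqref{tacnode:FV} itself is credited (for $\lam=1$) to Adler--Johansson--van Moerbeke rather than appearing in \cite{FV}. As written, your argument would not go through without adding this resolvent-identity step.
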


Proposition~\ref{prop:FVkernel}  is proved in
Section~\ref{subsection:proof:FVkernel} and its multi-time extended version is stated in Section~\ref{subsection:multitime}. The proposition was obtained in the symmetric case
$\lam=1$ by Adler, Johansson and van Moerbeke \cite[Theorem~1.2(i)]{AJvM}.

Incidentally we note
that a kernel of the form \eqref{tacnode:FV} allows an efficient numerical evaluation of its gap probabilities,
as is recently shown in \cite{BerCaf2}.

In the above proposition we use the notation \eqref{Dirac:delta}. Hence the
double integral can be rewritten as
\begin{multline*}
\int_0^{\infty}\!\!\!\int_0^{\infty}(\Een-\Kaa_{\Ai,\si})^{-1}(x,y)\,
\Aa_{\tau,u}(x)\Aa_{-\tau,v}(y)\ud x\ud y \\
= \int_0^{\infty}\!\!\!\int_0^{\infty}R_{\si}(x,y)
\Aa_{\tau,u}(x)\Aa_{-\tau,v}(y)\ud x\ud y+\int_0^{\infty}
\Aa_{\tau,u}(x)\Aa_{-\tau,v}(x)\ud x.
\end{multline*}

\subsection{Connection between the tacnode kernels $K\tac$ and $\mathcal L\tac$}

Now we state the first main theorem of this paper.

\begin{theorem}\label{theorem:identify:kernels} (Connection between kernels.)
Fix $\lam>0$. The kernel $\mathcal L\tac(u,v;\si,\tau)$ in \eqref{tacnode:FV}
equals the RH kernel $K\tac(u,v;r_1,r_2,s_1,s_2,\tau)$ in \eqref{tacnode:DKZ}
with the parameters
\begin{equation}\label{identify:kernels} r_1=\lam^{1/4},\qquad r_2=1,\qquad
s_1=\frac{1}{2}\lam^{3/4}(\Sigma+\tau^2), \qquad
s_2=\frac{1}{2}(\Sigma+\tau^2),
\end{equation}
where we recall \eqref{temperature:sigma}.
\end{theorem}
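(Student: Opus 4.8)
The plan is to build a bridge between the two kernels through the Airy-resolvent formulas for the RH matrix $M(z)$ that are advertised as Theorem~\ref{theorem:Airyformulas:FV}. Concretely, I would first use those formulas to express the vector $\pee(z)$ of \eqref{pee:firsttwocolumns} — the sum of the first two columns of $\what M(z)$ — in terms of the functions $b_{\tau,z}, \wtil b_{\tau,z}, \Aa_{\tau,z}, \wtil\Aa_{\tau,z}$ from \eqref{b:FV}--\eqref{Acal:FV} and the resolvent kernel $R_\si$. The parameter identification \eqref{identify:kernels} is precisely the dictionary that makes the $\theta_j(z)$ in \eqref{def:theta1} match the exponential prefactors hidden inside $b_{\tau,z}$ and $\wtil b_{\tau,z}$: one checks that $r_1 = \lam^{1/4}$, $r_2 = 1$ turns the $(-z)^{3/2}$ and $z^{3/2}$ terms into the correct Airy arguments after the scaling $C = (1+\lam^{-1/2})^{1/3}$, while the $s_i$ absorb the $\Sigma+\tau^2$ shift and $\tau$ plays the role of the linear (time) term $r_i^2\tau z$. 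So step one is bookkeeping: verify that with \eqref{identify:kernels} the analytic prefactors agree.

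Step two is to insert these expressions into the bilinear form \eqref{tacnode:DKZ}. Writing $K\tac(u,v) = \frac{1}{2\pi i(u-v)}\, \vecw^{\,T}\!\what M^{-1}(v)\,\pee(u)$ with $\vecw^{\,T} = (0,0,1,1)$, I would identify the row vector $\vecw^{\,T}\what M^{-1}(v)$ with (a multiple of) the corresponding "dual" object built from $\wtil b_{-\tau,v}$ and $\wtil\Aa_{-\tau,v}$; the inverse RH matrix is handled via $\what M^{-1} = \Aa^{-1}\!\cdots$ and the symmetry relations of the RH problem (the same ones used in \cite{DG} and in the solvability Proposition~\ref{prop:solv}). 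The product $\vecw^{\,T}\what M^{-1}(v)\pee(u)$ then becomes a sum of an "integrable" piece of the form $\int_0^\infty \wtil b_{\tau,u}\wtil b_{-\tau,v}$ plus a double integral against $R_\si$, and dividing by $u-v$ reproduces exactly \eqref{tacnode:FV} once the overall constants $C\lam^{1/3}$ and $C$ are matched. The appearance of $u-v$ in the denominator should come out of a Christoffel--Darboux / integrable-kernel identity: the difference $\pee(u)$ versus $\pee(v)$ inside the quadratic form collapses to a boundary term at the endpoint of the semi-infinite interval, which is the mechanism producing the resolvent.

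The main obstacle I anticipate is the second step — correctly tracking the inverse matrix $\what M^{-1}(v)$ and showing that the third-plus-fourth-row combination $(0,0,1,1)\what M^{-1}(v)$ is the "right" dual vector, i.e. that it pairs with $\Aa_{-\tau,v}$ (not, say, with some other linear combination). This requires exploiting the structure of the jump matrices in Figure~\ref{fig:modelRHP}: the block/anti-diagonal structure of $J_0$ and $J_5$ and the unipotent structure of the others encode the relation between columns $1,2$ and columns $3,4$ of $M$, hence between $\pee$ and the rows of $M^{-1}$. A secondary difficulty is purely analytic: the Airy-resolvent formulas of Theorem~\ref{theorem:Airyformulas:FV} are valid for $z$ in a fixed sector around $i\er_+$, so to evaluate the kernel at real arguments $u,v$ one must analytically continue, and I would need to check that the continuation of $\pee$ used to define \eqref{tacnode:DKZ} is consistent with the continuation of the Airy-resolvent expression; this is where the vanishing of the product $J_3J_4\cdots J_2 = I$ noted before \eqref{tacnode:DKZ} gets used. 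I would organize the proof so that all the analytic-continuation subtleties are confined to a single lemma, leaving the identification of the kernel as a clean algebraic manipulation once $\pee(z)$ is known in closed form.
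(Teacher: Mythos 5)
Your step one (matching constants under \eqref{identify:kernels}) and your use of the symmetry relation to handle the inverse matrix are fine; indeed $\what M^{-1}(z;s,\tau)=K^{-1}\what M^{T}(z;s,-\tau)K$ gives $(0,0,1,1)\,\what M^{-1}(v;s,\tau)=\bigl(-p_3,-p_4,p_1,p_2\bigr)(v;s,-\tau)$, so that $2\pi i(u-v)K\tac(u,v)$ becomes the Wronskian-type expression $p_1(v;-\tau)p_3(u;\tau)+p_2(v;-\tau)p_4(u;\tau)-p_3(v;-\tau)p_1(u;\tau)-p_4(v;-\tau)p_2(u;\tau)$. The genuine gap is in what you call step two. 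Theorem~\ref{theorem:Airyformulas:FV} only supplies Airy-resolvent formulas for the \emph{first two} entries of $\pee$; the entries $p_3,p_4$ enter only through \eqref{diffeq:pee:3}--\eqref{diffeq:pee:4}, i.e.\ through $p_1',p_2'$ together with the Hastings--McLeod data $q(\si)$ and $u(\si)$. So the numerator of your bilinear form explicitly contains Painlev\'e quantities, while the target \eqref{tacnode:FV} contains none, and the assertion that division by $u-v$ turns this into the double resolvent integral is precisely the nontrivial identity to be proved. Saying it "should come out of a Christoffel--Darboux / integrable-kernel identity, collapsing to a boundary term" names the desired conclusion but provides no mechanism: there is no differentiation in $u$ or $v$ at hand that manufactures the factor $u-v$, and even in the baby case of the Airy kernel the analogous identity \eqref{Airy:kernel:0} is proved by differentiating with respect to a shift parameter and integrating back --- which is exactly the device you would end up having to reconstruct.

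That device is the paper's actual proof, and it never touches the quotient by $u-v$ directly. One differentiates both kernels in the interaction parameter: by Theorem~\ref{theorem:tacnodeder} the $s$-derivative of $K\tac$ is a rank-$2$ kernel in $p_1,p_2$ only (the factor $u-v$ cancels against the $z$-linear part of the Lax matrix $V$ in \eqref{V:specific}), by Theorem~\ref{theorem:tacnodeder:FV} the $\si$-derivative of $\mathcal L\tac$ is the analogous rank-$2$ kernel in $\what p_1,\what p_2$, and Corollary~\ref{cor:AiryRHpee} gives $p_j=\sqrt{2\pi}\,r_j^{1/6}e^{r_j^4\tau(\Sigma+\frac23\tau^2)}\what p_j$ with the exponential canceling in the products $p_j(u;\tau)p_j(v;-\tau)$; then the chain rule \eqref{temperature:sigma}, \eqref{identify:s12} matches the two derivatives, and equality follows by integrating from $s$ to $\infty$ using that both kernels vanish as the parameter tends to $+\infty$. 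Your outline omits this differentiation-and-integration step, which is where the identity actually gets established, so as written the central identification is asserted rather than proved. (Your secondary worry about analytic continuation is not an issue: $\what M$ is entire and the right-hand sides of \eqref{M11:Airy:op} are entire in $z$, so the formulas may be evaluated at real $u,v$ directly.)
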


Theorem~\ref{theorem:identify:kernels} is proved in
Section~\ref{subsection:proof:identify}.

\subsection{Derivative of the tacnode kernel: a rank $2$ property}

To prove Theorem~\ref{theorem:identify:kernels} we will take the derivatives of
the kernels $K\tac$ and $\mathcal L\tac$ with respect to a certain parameter,
and prove that they are equal. The derivative will be a rank $2$ kernel. First
we discuss this for the kernel $K\tac$.

\subsubsection{Derivative of the kernel $K\tac$}

Recall the formula \eqref{tacnode:DKZ} for the tacnode kernel $K\tac$. This
kernel has an \lq integrable\rq\ form, due to the factor $u-v$ in the
denominator. Interestingly, this factor cancels when taking the derivative with
respect to $s_1$ or $s_2$. This is the content of the next theorem.

To state the theorem we parameterize
\begin{equation}\label{s12:s}
s_1 =: \sigma_1 s,\qquad s_2 =: \sigma_2 s,
\end{equation}
where $\sigma_1,\sigma_2$ are fixed and $s$ is variable. In the symmetric case
where $s_1=s_2=:s$, we could simply take $\si_1=\si_2=1$. We also consider
$r_1,r_2>0$ to be fixed. Then we write $K\tac(u,v;s,\tau)$, $\pee(z;s,\tau)$
etc., to denote the dependence on the two parameters $s$ and $\tau$.

\begin{theorem}\label{theorem:tacnodeder} (Derivative of tacnode kernel $K\tac$.)
With the parametrization \eqref{s12:s}, the kernel \eqref{tacnode:DKZ}
satisfies
\begin{equation}\label{rank2:tacnode:RH} \frac{\pp}{\pp s}
K\tac(u,v;s,\tau) = -\frac{1}{\pi}\left(\sigma_1
p_1(u;s,\tau)p_1(v;s,-\tau)+\sigma_2 p_2(u;s,\tau)p_2(v;s,-\tau)\right),
\end{equation}
where $p_j$, $j=1,\ldots,4$, denotes the $j$th entry of the vector $\pee$ in
\eqref{pee:firsttwocolumns}. Consequently, if $\si_1,\si_2>0$ then
\begin{equation}\label{integrated:kernel}
K\tac(u,v;s,\tau) = \frac{1}{\pi}\int_s^{\infty}\left(\sigma_1 p_1(u;\wtil
s,\tau)p_1(v;\wtil s,-\tau)+\sigma_2 p_2(u;\wtil s,\tau)p_2(v;\wtil
s,-\tau)\right)\ud \wtil s.
\end{equation}
\end{theorem}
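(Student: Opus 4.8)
The plan is to differentiate the integrable formula \eqref{tacnode:DKZ} for $K\tac$ directly with respect to $s$ and show that the apparent singularity from the factor $u-v$ in the denominator cancels. Writing $K\tac(u,v;s,\tau) = \frac{1}{2\pi i (u-v)} \mathbf{w}^T \what M^{-1}(v)\what M(u)\vecv$ with $\mathbf{w}^T = (0,0,1,1)$ and $\vecv = (1,1,0,0)^T$, the first step is to understand how $\what M(z;s,\tau)$ depends on $s$. Since $\what M$ is obtained from the RH matrix $M$ of RH problem~\ref{rhp:modelM} by analytic continuation, and the jump matrices $J_k$ in Figure~\ref{fig:modelRHP} are independent of all parameters, the $s$-dependence enters only through the exponents $\theta_j(z)$ in \eqref{def:theta1}. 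With the parametrization \eqref{s12:s}, the terms linear in $s$ are $\pp_s \theta_1 = -2\sigma_1(-z)^{1/2}$, $\pp_s\theta_2 = -2\sigma_2 z^{1/2}$, $\pp_s\theta_3 = +2\sigma_1(-z)^{1/2}$, $\pp_s\theta_4 = +2\sigma_2 z^{1/2}$ (here $s_1 = \sigma_1 s$, $s_2 = \sigma_2 s$). The standard argument is then that $(\pp_s \what M)\what M^{-1}$ is a rational (in fact polynomial) function of $z$: it has no jumps (the jumps are constant), it is bounded at $z=0$, and from \eqref{M:asymptotics} its behaviour at infinity is governed by $\what M \cdot \diag(\pp_s\theta_j)\cdot \what M^{-1}$ conjugated through the constant matrix $\Aa$ and the diagonal factor $\diag((-z)^{\mp 1/4}, z^{\mp 1/4})$; since each $\pp_s\theta_j$ grows like $z^{1/2}$, this gives at most polynomial growth, and by Liouville $(\pp_s\what M)\what M^{-1}$ is a polynomial whose coefficients are read off from $M_1, M_2$ in \eqref{M:asymptotics}. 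This is the Lax-pair-type computation familiar from \cite{DKZ,DG}; I would expect it to produce a degree-one polynomial in a suitable variable, or more precisely a simple explicit matrix built from the $(-z)^{1/2}$ and $z^{1/2}$ pieces and the leading correction coefficients.

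With $\pp_s \what M = A(z)\what M$ for this explicit $A(z)$, one computes
\[
\pp_s\bigl(\what M^{-1}(v)\what M(u)\bigr) = \what M^{-1}(v)\bigl(A(u)-A(v)\bigr)\what M(u).
\]
The key point is that $A(u)-A(v)$, sandwiched between $\mathbf{w}^T\what M^{-1}(v)$ on the left and $\what M(u)\vecv$ on the right, must produce exactly a factor $(u-v)$ times a rank-$2$ object, so the $(u-v)^{-1}$ prefactor cancels. Concretely, I expect $A(z)$ to contain a term proportional to $z^{1/2}$ times a fixed matrix, say $A(z) = z^{1/2} B_+ + (-z)^{1/2} B_- + (\text{lower order})$; the differences $u^{1/2}-v^{1/2}$ etc. do not obviously give $u-v$, so the real mechanism must be that after left/right multiplication by the relevant rows and columns of $\what M^{\pm 1}$ one recognizes $\mathbf{w}^T\what M^{-1}(v)$ and $\what M(u)\vecv$ in terms of $\pee$ and its "dual," and that the $z^{1/2}$-homogeneity conspires with the structure of $\Aa$ and the symmetry $\tau \leftrightarrow -\tau$ to collapse everything. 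In fact the cleanest route is probably to avoid splitting $A$ and instead compute $\pp_s K\tac$ as a contour integral / residue: differentiate, use that $\what M^{-1}(v)\what M(u)$ has a removable singularity at $u=v$ with $\pp_s$ of its value at $u=v$ giving the diagonal contribution, and identify the result with $p_j(u;s,\tau)p_j(v;s,-\tau)$. Here the appearance of $-\tau$ in the second factor comes from the fact that $\mathbf{w}^T\what M^{-1}(v)$ is, up to transpose and the symmetry relations satisfied by $M$ (the symmetries used in \cite{DKZ} that relate $M(z;\tau)$ to $M(z;-\tau)$ via $\Aa$ and permutation matrices), equal to $\pee(v;s,-\tau)^T$ restricted to its first two entries. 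I would make this precise using whatever symmetry of $M$ under $\tau\mapsto-\tau$ is recorded in \cite{DKZ,DG}; that identification is what turns the bilinear expression into $\sigma_1 p_1(u;s,\tau)p_1(v;s,-\tau)+\sigma_2 p_2(u;s,\tau)p_2(v;s,-\tau)$ with the correct signs and coefficients $-1/\pi$ (the $1/(2\pi i)$ prefactor combined with the explicit constants $\pm 2$ from $\pp_s\theta_j$).

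The main obstacle will be pinning down the matrix $A(z) = (\pp_s\what M)\what M^{-1}$ precisely enough — in particular showing it has \emph{no} growing polynomial part beyond what is needed and extracting the exact constant — and then correctly invoking the $\tau\mapsto-\tau$ symmetry of the RH matrix to rewrite the left factor $\mathbf{w}^T\what M^{-1}(v)$ in terms of $\pee(v;s,-\tau)$. Once \eqref{rank2:tacnode:RH} is established, the integrated formula \eqref{integrated:kernel} follows by integrating in $s$ from $s$ to $\infty$, provided one checks that $K\tac(u,v;s,\tau)\to 0$ as $s\to+\infty$; this boundary condition should follow from the fact that as $s\to\infty$ the exponents $\theta_j$ force $\what M(z)\vecv$ to decay (when $\sigma_1,\sigma_2>0$ the relevant exponentials $e^{\theta_1},e^{\theta_2}$ are exponentially small in the sector around the positive imaginary axis), so that $p_1,p_2\to 0$ and the kernel vanishes in the limit. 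I would state and use this decay as a lemma, deducing it from the asymptotic analysis already available for $M(z)$ in the large-parameter regime in \cite{DKZ}.
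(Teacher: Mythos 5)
Your overall strategy is the same as the paper's: differentiate \eqref{tacnode:DKZ} through a Lax-type equation $\frac{\pp}{\pp s}\what M=V(z)\what M$, cancel the factor $u-v$, rewrite $(0,0,1,1)\,\what M^{-1}(v;s,\tau)$ via a $\tau\mapsto-\tau$ symmetry, and integrate in $s$ using decay of $\pee$. But there is a genuine gap at the decisive step, and your tentative description of it is wrong. You posit $A(z)=(\pp_s\what M)\what M^{-1}=z^{1/2}B_++(-z)^{1/2}B_-+(\text{lower order})$, note that $u^{1/2}-v^{1/2}$ does not yield $u-v$, and then appeal to an unspecified cancellation produced by sandwiching between rows and columns of $\what M^{\pm1}$, or to a vague residue computation. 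No such mechanism exists or is needed: the actual point is that $A(z)=V(z)$ contains \emph{no} half-integer powers. The conjugation by $\diag((-z)^{-1/4},z^{-1/4},(-z)^{1/4},z^{1/4})\,\Aa$ in \eqref{M:asymptotics} recombines the $z^{1/2}$-growth of $\pp_s\theta_j$ from \eqref{def:theta1} into a genuine matrix polynomial of degree one in $z$, and the Liouville argument (carried out in Section~\ref{subsection:Lax}, cf.\ \eqref{Lax:s:nonsymm}) yields the exact form \eqref{V:specific},
\begin{equation*}
V(z)=V(0)-2iz\begin{pmatrix}0&0&0&0\\0&0&0&0\\ \si_1&0&0&0\\ 0&\si_2&0&0\end{pmatrix},
\end{equation*}
so that $V(u)-V(v)=-2i(u-v)$ times this constant rank-$2$ matrix. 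It is precisely this linearity in $z$, together with the location of the nonzero entries in the lower-left block and the constant $-2i$, that cancels the prefactor $1/(2\pi i(u-v))$ and produces the coefficient $-1/\pi$ and the rank-$2$ structure in \eqref{rank2:tacnode:RH}. Your proposal never establishes that $V$ is polynomial of degree one, never identifies its $z$-coefficient, and indeed doubts both; without that computation the proof does not go through.

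The remaining ingredients you invoke are sound and coincide with the paper's: the symmetry $\what M^{-1}(z;s,\tau)=K^{-1}\what M^{T}(z;s,-\tau)K$ (Lemma~\ref{lemma:symmetries}) turns the left factor into $\pee^{T}(v;s,-\tau)$ restricted to its first two entries, and the decay of $p_1,p_2$ as $s\to+\infty$ when $\si_1,\si_2>0$ (from \cite[Sec.~3]{DKZ}, extended to $\tau\neq0$) justifies \eqref{integrated:kernel}. To close the gap you must actually expand $(\pp_s\what M)\what M^{-1}$ at $z=\infty$ using \eqref{M:asymptotics}, check that all fractional powers cancel, and read off the linear coefficient — exactly the computation done for $U,V,W$ in Section~\ref{subsection:Lax}.
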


Theorem~\ref{theorem:tacnodeder} is proved in
Section~\ref{subsection:proof:tacnodeder}. Note that the right hand
side of \eqref{rank2:tacnode:RH} is a rank-$2$ kernel. Later we will apply the theorem with
the parameters $\si_1,\si_2$ in \eqref{identify:s12}.

\begin{remark}
Formulas \eqref{rank2:tacnode:RH}--\eqref{integrated:kernel} have an analogue
for the kernel $K_{\Psi}(u,v)$ which is associated to the $2\times 2$
Flaschka-Newell RH matrix $\Psi(z)$, see \cite[Eq.~(1.22)]{CK}. The kernel
$K_{\Psi}(u,v)$ occurs in Hermitian random matrix theory when the limiting
eigenvalue density vanishes quadratically at an interior point of its support
\cite{BI,CK}.
\end{remark}

\subsubsection{Derivative of the kernel $\mathcal L\tac$}
\label{subsection:tacnodeder:Airy}

Next we consider the derivative of the tacnode kernel $\mathcal L\tac(u,v;\si,\tau)$
with respect to the parameter $\si$.

\begin{theorem}\label{theorem:tacnodeder:FV} (Derivative of tacnode kernel $\mathcal L\tac$.) Fix $\lam>0$. The kernel
\eqref{tacnode:FV} satisfies
\begin{equation}\label{rank2:tacnode:FV} \frac{\pp}{\pp \si}
\mathcal L\tac(u,v;\si,\tau) = -C^{-2}\left(\lam^{1/3}\what
p_1(u;\si,\tau)\what p_1(v;\si,-\tau)+\lam^{-1/2}\what p_2(u;\si,\tau)\what
p_2(v;\si,-\tau)\right)
\end{equation}
and consequently
\begin{equation}\label{integrated:tacnode:FV}
\mathcal L\tac(u,v;\si,\tau) = C^{-2}\int_{\si}^{\infty}\left(\lam^{1/3}\what
p_1(u;s,\tau)\what p_1(v;s,-\tau)+\lam^{-1/2}\what p_2(u;s,\tau)\what
p_2(v;s,-\tau)\right)\ud s.
\end{equation}
Here we denote $C=(1+\lam^{-1/2})^{1/3}$ and
\begin{equation}
\label{p1hat:op} \begin{array}{l}\displaystyle \what p_1(z;\si,\tau) =
\int_0^{\infty}(\Een-\Kaa_{\Ai,\si})^{-1}(x,0)\,\wtil\Aa_{\tau,z}(x)\ud x
\\ \displaystyle \what p_2(z;\si,\tau) =
\int_0^{\infty}(\Een-\Kaa_{\Ai,\si})^{-1}(x,0)\,\Aa_{\tau,z}(x)\ud
x,\end{array}
\end{equation}
with the notation \eqref{Dirac:delta}.
\end{theorem}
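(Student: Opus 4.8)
The plan is to differentiate the double-integral representation \eqref{tacnode:FV} of $\mathcal L\tac$ directly with respect to $\si$ and to organize the resulting terms into the claimed rank-$2$ form. Three sources of $\si$-dependence enter: the explicit resolvent $(\Een-\Kaa_{\Ai,\si})^{-1}$, the shifted Airy kernel inside the definitions \eqref{Acal:FV} of $\Aa_{\tau,z}$ and $\wtil\Aa_{\tau,z}$, and the factor $\Sigma+\tau^2$ appearing in the arguments $y,\wtil y$ of \eqref{b:FV} (recall \eqref{temperature:sigma} couples $\si$ to $\Sigma$ linearly). First I would record the basic differentiation rule for the shifted Airy operator. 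Since $K_{\Ai,\si}(x,y)=\int_0^\infty\Ai(x+z+\si)\Ai(y+z+\si)\ud z$, a change of variables gives $K_{\Ai,\si}(x,y)=\int_\si^\infty\Ai(x+w)\Ai(y+w)\,dw$, hence $\pa_\si K_{\Ai,\si}(x,y)=-\Ai(x+\si)\Ai(y+\si)$; equivalently, as an operator identity, $\pa_\si\Kaa_{\Ai,\si}=-\,\Ai_\si\otimes\Ai_\si$ where $\Ai_\si$ is the function $x\mapsto\Ai(x+\si)$ on $L^2([0,\infty))$. Differentiating the resolvent then yields the familiar formula $\pa_\si(\Een-\Kaa_{\Ai,\si})^{-1}=-(\Een-\Kaa_{\Ai,\si})^{-1}\big(\Ai_\si\otimes\Ai_\si\big)(\Een-\Kaa_{\Ai,\si})^{-1}$, i.e. in kernel form $\pa_\si R_\si(x,y)=-Q_\si(x)Q_\si(y)$ with $Q_\si(x):=\int_0^\infty(\Een-\Kaa_{\Ai,\si})^{-1}(x,z)\Ai(z+\si)\ud z$. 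I would also note the boundary relation $Q_\si(x)=-R_\si(x,0)$ up to the $\delta$-convention, which is the standard Tracy--Widom identity relating the resolvent evaluated at the endpoint $0$ to the action on the Airy function; this is what makes the $(x,0)$ evaluations in \eqref{p1hat:op} appear.

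Next I would compute $\pa_\si\Aa_{\tau,z}$ and $\pa_\si\wtil\Aa_{\tau,z}$. From \eqref{Acal:FV} there is a contribution $-\lam^{1/6}\int_0^\infty\pa_\si[\Ai(x+y+\si)]\wtil b_{\tau,z}(y)\ud y$ from the shift inside the Airy factor, plus the contributions $\pa_\si b_{\tau,z}$, $\pa_\si\wtil b_{\tau,z}$ coming through $\Sigma$. The key algebraic point — and the step I expect to be the main obstacle — is that the derivative of the function $b_{\tau,z}$ with respect to $\Sigma$ is again (a multiple of) $\Ai$ evaluated at the same argument: since $b_{\tau,z}(x)=e^{-\tau y+\tau^3/3}\Ai(y)$ with $y=z+Cx+\Sigma+\tau^2$, we have $\pa_\Sigma b_{\tau,z}(x)=e^{-\tau y+\tau^3/3}\big(\Ai'(y)-\tau\,\Ai(y)\big)$, which is \emph{not} proportional to $b_{\tau,z}$ but is a translate-type modification; the miracle that has to be verified is that after assembling all three contributions and using the chain rule $\pa_\si=(\text{const})\,\pa_\Sigma$ from \eqref{temperature:sigma} together with the Airy differential equation, everything collapses so that $\pa_\si\Aa_{\tau,z}$ can be written purely in terms of $\Ai_\si$ and the already-defined objects. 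I anticipate that the cleanest route is not to compute $\pa_\si\Aa_{\tau,z}$ in isolation but to differentiate the whole bilinear expression \eqref{tacnode:FV} and look for cancellations among the three types of terms; the first (single-integral) term $C\lam^{1/3}\int_0^\infty\wtil b_{\tau,u}\wtil b_{-\tau,v}$ will contribute a boundary term at $x=0$ upon differentiation (again via an identity of the form $\pa_\Sigma(\wtil b_{\tau,u}\wtil b_{-\tau,v})=$ total derivative in $x$ plus lower-order), and this boundary term should combine with the diagonal $x=y$ part of the resolvent term to produce exactly the endpoint evaluations $(\Een-\Kaa_{\Ai,\si})^{-1}(x,0)$ in $\what p_1,\what p_2$.

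Having obtained the derivative in the factored form \eqref{rank2:tacnode:FV}, the integrated formula \eqref{integrated:tacnode:FV} follows by integrating in $\si$ from $s$ to $\infty$, once I check that $\mathcal L\tac(u,v;\si,\tau)\to 0$ as $\si\to+\infty$. This decay is standard: as $\si\to\infty$ the operator $\Kaa_{\Ai,\si}\to0$ in trace norm (the shifted Airy kernel decays super-exponentially), so $(\Een-\Kaa_{\Ai,\si})^{-1}\to\Een$, while the functions $\Aa_{\tau,z}(x)$ and $\wtil b_{\tau,z}(x)$ involve $\Ai$ evaluated at arguments shifted by $\Sigma+\tau^2\sim\si$, hence also decay; a dominated-convergence argument closes it. As a consistency check I would verify the symmetric specialization $\lam=1$, where $C=2^{1/3}$, $\wtil\Aa_{\tau,z}=\Aa_{\tau,-z}$, and \eqref{rank2:tacnode:FV} should reduce to the known rank-$2$ derivative formula in \cite{AJvM}; and I would confirm that, under the parameter identification \eqref{identify:kernels}, \eqref{rank2:tacnode:FV} is the image of \eqref{rank2:tacnode:RH} under Theorem~\ref{theorem:identify:kernels}, which also fixes the normalization constants $\lam^{1/3}$, $\lam^{-1/2}$ and $C^{-2}$ and matches $\what p_1,\what p_2$ with $p_1,p_2$ (so the Airy resolvent formulas of Theorem~\ref{theorem:Airyformulas:FV} are exactly what make the two rank-$2$ statements the same).
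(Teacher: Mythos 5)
Your overall scheme is the same as the paper's: differentiate \eqref{tacnode:FV} in $\si$ directly, use $\frac{\pp}{\pp\si}R_\si(x,y)=-Q(x)Q(y)$, convert the remaining $\si$-derivatives into $x$- and $y$-derivatives so that integration by parts produces boundary terms at $0$, and finally integrate from $\si$ to $\infty$ using the super-exponential decay (this last step, and the $\lam=1$ consistency check, are fine). However, two of the ingredients you plan to rely on are wrong or missing. First, the identity you invoke to produce the $(x,0)$ evaluations, ``$Q_\si(x)=-R_\si(x,0)$ up to the $\delta$-convention,'' is false: already to leading order as $\si\to+\infty$ one has $Q(x)\approx\Ai(x+\si)>0$ while $-R(x,0)\approx-K_{\Ai,\si}(x,0)<0$. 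The correct relation is the transformed one, $\int_0^\infty\Ai(x+y+\si)Q(y)\ud y=R(x,0)$ (the paper's \eqref{operator:Airy:2}), and the $(x,0)$ evaluations in \eqref{p1hat:op} do not come from any such pointwise identity but from the equivalent representations $\what p_1=\wtil b_{\tau,z}(0)-\lam^{-1/6}\int_0^\infty Q\,\Aa_{\tau,z}$ and $\what p_2=\Aa_{\tau,z}(0)+\int_0^\infty R(x,0)\Aa_{\tau,z}(x)\ud x$ (Lemma~\ref{lemma:equiv:ways}), combined with the structure of the terms produced by integration by parts.

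Second, the step you label the ``main obstacle'' is in fact the simple identity that makes everything work, and you stop just short of it: since $x$ and $\Sigma$ enter $b_{\tau,z}$ and $\wtil b_{\tau,z}$ only through the linear combinations $Cx+\Sigma$ (resp.\ $Cx+\sqrt{\lam}\,\Sigma$), the $\si$-derivative is \emph{exactly} a constant multiple of the $x$-derivative: $(1+\lam^{-1/2})\frac{\pp}{\pp\si}b_{\tau,z}=\lam^{-1/2}\frac{\pp}{\pp x}b_{\tau,z}$ and $(1+\lam^{-1/2})\frac{\pp}{\pp\si}\wtil b_{\tau,z}=\frac{\pp}{\pp x}\wtil b_{\tau,z}$; the expression you computed, $e^{-\tau y+\tau^3/3}(\Ai'(y)-\tau\Ai(y))$, is precisely $C^{-1}\frac{\pp}{\pp x}b_{\tau,z}(x)$. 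No Airy differential equation is needed anywhere in this proof, and there is no ``miracle'' left to verify once this proportionality is noted; it yields $(1+\lam^{-1/2})\frac{\pp}{\pp\si}\Aa_{\tau,z}(x)=\lam^{-1/2}\frac{\pp}{\pp x}\Aa_{\tau,z}(x)+\lam^{1/6}\Ai(x+\si)\wtil b_{\tau,z}(0)$ after one integration by parts. Finally, after integrating by parts against the resolvent you need, in addition to $\frac{\pp}{\pp\si}R=-Q(x)Q(y)$, the second Tracy--Widom identity $\left(\frac{\pp}{\pp x}+\frac{\pp}{\pp y}\right)R(x,y)=R(x,0)R(0,y)-Q(x)Q(y)$; only the combination of the two (as in \eqref{intparts:FV:4}) produces $-\lam^{-1/2}R(x,0)R(0,y)-Q(x)Q(y)$, which is what factors the derivative into the rank-$2$ form with the representations \eqref{p1hat:Q} and \eqref{p2hat:R}. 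Without that identity, the cancellation you are counting on does not occur, so as written your plan has a genuine gap at exactly the point where the rank-$2$ structure is supposed to emerge.
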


Theorem~\ref{theorem:tacnodeder:FV} is proved in
Section~\ref{subsection:proof:tacnodeder:FV} and its multi-time extended version is stated in Section~\ref{subsection:multitime}. In the proof we will use certain
functions \eqref{Q:Airy:def} that were introduced by Tracy-Widom \cite{TW1}. We
will also obtain some alternative representations for \eqref{p1hat:op}, see
Lemma~\ref{lemma:equiv:ways}.

Note that the right hand side of \eqref{rank2:tacnode:FV} is again a rank-$2$
kernel. This is a result of independent interest.

\subsection{The multi-time case}
\label{subsection:multitime}

Our results for the Ferrari-Vet\H o tacnode kernel can be readily generalized to the multi-time case, corresponding to two different times $\tau_1,\tau_2$. It suffices to replace all the subscripts $\tau$ and $-\tau$, by $\tau_1$ and $-\tau_2$ respectively. This yields the following generalization of Proposition~\ref{prop:FVkernel}.

\begin{proposition}\label{prop:FVkernel:multi} (Extended tacnode kernel.)
Fix $\lam>0$. The multi-time extended tacnode kernel $\mathcal L\tac$ of Ferrari-Vet\H o
\cite{FV} can be written in the
form\footnote{This kernel is
called $\mathcal L\tac^{\lam,\si}(\tau_1,\xi_1,\tau_2,\xi_2)$ in \cite{FV} with $\xi_1=u$ and $\xi_2=v$. Recall that we use $\si$ in
a different meaning.}
\begin{multline}\label{tacnode:FV:multi}
\mathcal L\tac(u,v;\si,\tau_1,\tau_2) = -\mathbf{1}_{\tau_1<\tau_2}\frac{1}{\sqrt{4\pi(\tau_2-\tau_1)}}\exp\left(-\frac{(v-u)^2}{4(\tau_2-\tau_1)}\right)
\\+C\lam^{1/3}\int_0^{\infty}\wtil b_{\tau_1,u}(x)\wtil b_{-\tau_2,v}(x)\ud x
+C\int_0^{\infty}\!\!\!\int_0^{\infty}(\Een-\Kaa_{\Ai,\si})^{-1}(x,y)
\Aa_{\tau_1,u}(x)\Aa_{-\tau_2,v}(y)\ud x\ud y,
\end{multline}
with the notations \eqref{Airy:kernel} and
\eqref{Dirac:delta}--\eqref{Acal:FV}.
\end{proposition}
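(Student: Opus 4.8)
The plan is to rerun the proof of Proposition~\ref{prop:FVkernel} (given in Section~\ref{subsection:proof:FVkernel}) while keeping the two times $\tau_1$ and $\tau_2$ distinct. That proof starts from the Ferrari--Vet\H o formula for the extended kernel $\mathcal L\tac^{\lam,\si}(\tau_1,\xi_1,\tau_2,\xi_2)$ of \cite{FV}, specializes $\tau_1=\tau_2=\tau$, and then rewrites the result through a chain of algebraic manipulations: expanding the convolutions against the Airy kernel, inserting the resolvent $(\Een-\Kaa_{\Ai,\si})^{-1}$, and regrouping terms into the functions $b_{\tau,z}$, $\wtil b_{\tau,z}$, $\Aa_{\tau,z}$, $\wtil\Aa_{\tau,z}$ of \eqref{b:FV}--\eqref{Acal:FV}. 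The key observation is that in this chain the two time parameters never interact: the ``$u$''-factors of the kernel always carry the first time and the ``$v$''-factors always carry the (negated) second time, and each time enters the building blocks $b,\wtil b$ only through the scalar prefactor $\exp(-\tau y+\tau^3/3)$ and the shift $y=z+Cx+\Sigma+\tau^2$. Hence the identical computation, performed with $\tau_1$ in place of $\tau$ in every object depending on $u$ and $-\tau_2$ in place of $-\tau$ in every object depending on $v$, is valid verbatim and produces the second line of \eqref{tacnode:FV:multi}.

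Concretely, I would first transcribe the Ferrari--Vet\H o extended kernel in the general multi-time case into our normalization, using the notational dictionary between \cite{FV} and our conventions recorded in the footnotes accompanying \eqref{b:FV} and \eqref{tacnode:FV}, together with the identifications \eqref{temperature:sigma} and \eqref{C:FV0}. Like every multi-time correlation kernel for a system of nonintersecting Brownian paths, this kernel splits into a ``free'' part, proportional to $\mathbf 1_{\tau_1<\tau_2}$ times the heat kernel of the limiting Brownian motion, plus a part of exactly the analytic type treated in the single-time proof. The free part plays no role in the algebraic reduction and is simply carried along; what does need a short check is that, after passing to our normalization, its prefactor and its scaling collapse to precisely $\frac{1}{\sqrt{4\pi(\tau_2-\tau_1)}}\exp(-(v-u)^2/(4(\tau_2-\tau_1)))$, i.e. that the auxiliary powers of $\lam$ and $C$ produced by the change of variables cancel. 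Applying the single-time reduction to the remaining part then gives the two integral terms of \eqref{tacnode:FV:multi}, and combining yields the claimed formula. As a sanity check, setting $\tau_1=\tau_2$ forces $\mathbf 1_{\tau_1<\tau_2}=0$ and recovers \eqref{tacnode:FV}, as it must.

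I expect the only real work to be bookkeeping: making sure that the two occurrences of the time variable in \eqref{b:FV}--\eqref{Acal:FV} — the exponential prefactor versus the $+\tau^2$ in the argument shift — are each assigned the correct member of $\{\tau_1,-\tau_2\}$ throughout, and that the conversion of the Ferrari--Vet\H o heat-kernel term to our variables is carried out with the correct powers of $\lam$ and $C$. There is no new analytic difficulty beyond what is already needed for Proposition~\ref{prop:FVkernel}; the statement is genuinely a transcription of the single-time argument with the extra Gaussian term recorded. The same remark applies to the multi-time extension of Theorem~\ref{theorem:tacnodeder:FV}, whose proof is obtained from the single-time one in the identical fashion.
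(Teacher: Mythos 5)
Your proposal is correct and matches the paper's own treatment: the paper simply states that the multi-time results "can be proved exactly as in the single-time case" by replacing the subscripts $\tau$ and $-\tau$ with $\tau_1$ and $-\tau_2$, carrying the heat-kernel term of the Ferrari--Vet\H{o} formula along unchanged, which is precisely your plan (and since the footnote identification $\xi_1=u$, $\xi_2=v$ involves no rescaling of variables, the Gaussian term transcribes literally, so even the normalization check you flag is immediate).
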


The rank $2$ formula in Theorem~\ref{theorem:tacnodeder:FV} generalizes as follows.

\begin{theorem}\label{theorem:tacnodeder:FV:multi} (Derivative of extended tacnode kernel.) Fix $\lam>0$. The kernel
\eqref{tacnode:FV:multi} satisfies
\begin{equation}\label{rank2:tacnode:FV:multi} \frac{\pp}{\pp \si}
\mathcal L\tac(u,v;\si,\tau_1,\tau_2) = -C^{-2}\left(\lam^{1/3}\what
p_1(u;\si,\tau_1)\what p_1(v;\si,-\tau_2)+\lam^{-1/2}\what p_2(u;\si,\tau_1)\what
p_2(v;\si,-\tau_2)\right)
\end{equation}
and consequently
\begin{multline}\label{integrated:tacnode:FV:multi}
\mathcal L\tac(u,v;\si,\tau_1,\tau_2) = -\mathbf{1}_{\tau_1<\tau_2}\frac{1}{\sqrt{4\pi(\tau_2-\tau_1)}}\exp\left(-\frac{(v-u)^2}{4(\tau_2-\tau_1)}\right)
\\ +C^{-2}\int_{\si}^{\infty}\left(\lam^{1/3}\what
p_1(u;s,\tau_1)\what p_1(v;s,-\tau_2)+\lam^{-1/2}\what p_2(u;s,\tau_1)\what
p_2(v;s,-\tau_2)\right)\ud s.
\end{multline}
Here we use again the notations $C=(1+\lam^{-1/2})^{1/3}$  and \eqref{p1hat:op}.
\end{theorem}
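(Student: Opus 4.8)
The plan is to reduce the multi-time statement to the already-proved single-time Theorem~\ref{theorem:tacnodeder:FV} by observing that the $\si$-derivative only acts on the double-integral term and that term is "diagonal" in the time variables. Concretely, compare the multi-time kernel \eqref{tacnode:FV:multi} with the single-time kernel \eqref{tacnode:FV}. The multi-time kernel differs from the single-time one in exactly three ways: (i) the Gaussian term $-\mathbf 1_{\tau_1<\tau_2}(4\pi(\tau_2-\tau_1))^{-1/2}\exp(-(v-u)^2/(4(\tau_2-\tau_1)))$ is added; (ii) the subscripts $\tau$ on $b,\wtil b,\Aa$ become $\tau_1$; (iii) the subscripts $-\tau$ become $-\tau_2$. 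Crucially, none of these three modifications touches the operator $(\Een-\Kaa_{\Ai,\si})^{-1}$, which is the only place where $\si$ enters apart from inside the functions $b_{\tau_1,u},b_{-\tau_2,v}$, $\wtil b_{\tau_1,u},\wtil b_{-\tau_2,v}$, and the shifted Airy kernels in $\Aa,\wtil\Aa$. So the first step is to record that the Gaussian term in \eqref{tacnode:FV:multi} is independent of $\si$, hence $\pp_\si$ annihilates it; this immediately explains why \eqref{rank2:tacnode:FV:multi} has no Gaussian contribution while \eqref{integrated:tacnode:FV:multi} does (the Gaussian reappears as the "constant of integration" when integrating back from $\si=\infty$, using that $\mathcal L\tac\to$ Gaussian term as $\si\to+\infty$ because the Airy operator and the functions $\Aa$ decay).

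Second, I would carry out the $\si$-differentiation of the two remaining terms of \eqref{tacnode:FV:multi} by literally repeating the computation done in the proof of Theorem~\ref{theorem:tacnodeder:FV} in Section~\ref{subsection:proof:tacnodeder:FV}, but now keeping $\tau_1$ on every left-type factor and $\tau_2$ on every right-type factor. The key analytic input there is the differentiation formula for the Airy resolvent with respect to the endpoint $\si$, namely $\pp_\si (\Een-\Kaa_{\Ai,\si})^{-1}(x,y) = -(\Een-\Kaa_{\Ai,\si})^{-1}(x,0)\,(\Een-\Kaa_{\Ai,\si})^{-1}(0,y)$ (the standard "one-boundary-point" resolvent identity for a kernel whose $\si$-dependence is a pure shift, after the change of variables that moves the shift onto the interval endpoint), together with the corresponding $\si$-derivatives of $\wtil b_{\tau_1,u}, b_{\tau_1,u}$ and of $\Aa_{\tau_1,u},\wtil\Aa_{\tau_1,u}$ coming from \eqref{b:FV}--\eqref{Acal:FV}; these are identical to the single-time case because $\si$ enters $b,\wtil b,\Aa,\wtil\Aa$ through $\Sigma$ and through the shift of the Airy kernel in exactly the same algebraic way, irrespective of whether the time label is $\tau$, $\tau_1$ or $\tau_2$. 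Collecting terms, the $x$-integral and $y$-integral decouple through the inserted boundary evaluations at $0$, producing precisely $\wtil\Aa_{\tau_1,u}$-against-$(\Een-\Kaa_{\Ai,\si})^{-1}(\cdot,0)$ and $\Aa_{-\tau_2,v}$-against-$(\Een-\Kaa_{\Ai,\si})^{-1}(\cdot,0)$, i.e. the functions $\what p_1(u;\si,\tau_1)$, $\what p_1(v;\si,-\tau_2)$, $\what p_2(u;\si,\tau_1)$, $\what p_2(v;\si,-\tau_2)$ of \eqref{p1hat:op}. Matching the prefactors $C^{-2}\lam^{1/3}$ and $C^{-2}\lam^{-1/2}$ is then the same bookkeeping as in Theorem~\ref{theorem:tacnodeder:FV}, since the scalars $C$, $\lam^{1/6}$, $\lam^{-1/6}$ multiplying the various pieces do not involve the time variables. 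This establishes \eqref{rank2:tacnode:FV:multi}.

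Third, to get \eqref{integrated:tacnode:FV:multi} I would integrate \eqref{rank2:tacnode:FV:multi} from $\si$ to $+\infty$ and identify the boundary term at $\si=+\infty$. As $\si\to+\infty$ the operator $\Kaa_{\Ai,\si}\to 0$ in trace norm (the shifted Airy kernel decays super-exponentially), so $(\Een-\Kaa_{\Ai,\si})^{-1}\to\Een$; moreover $\Aa_{\tau_1,u}\to b_{\tau_1,u}$, $\wtil\Aa_{\tau_1,u}\to\wtil b_{\tau_1,u}$, and because the argument of the Airy functions in $b_{\tau_1,u}$ contains $+\Sigma$ which $\to+\infty$, the integrals $\int_0^\infty \wtil b_{\tau_1,u}\wtil b_{-\tau_2,v}$ and $\int_0^\infty b_{\tau_1,u}b_{-\tau_2,v}$ both tend to $0$. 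Hence the double-integral and single-integral terms of \eqref{tacnode:FV:multi} vanish as $\si\to+\infty$, leaving only the Gaussian term, which is exactly the boundary term appearing in \eqref{integrated:tacnode:FV:multi}. (Alternatively one quotes directly that $\mathcal L\tac$ converges to the multi-time Brownian-bridge/Gaussian kernel, which is the known degeneration of the extended tacnode process as the interaction strength $\Sigma\to\infty$.) Combining the boundary term with the integral of the rank-$2$ derivative gives \eqref{integrated:tacnode:FV:multi}.

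The main obstacle I anticipate is purely one of verification rather than of new ideas: making sure that the decoupling-through-the-endpoint argument is carried out with the correct bilinear pairing so that the $\tau_1$ label lands on the first $\what p$-factor and the $\tau_2$ label on the second, and that the asymmetric Airy operator pieces ($\lam^{1/6}$ vs. $\lam^{-1/6}$ in \eqref{Acal:FV}) are tracked consistently; since in the non-symmetric case $\lam\neq1$ the two terms in the rank-$2$ formula are genuinely different, one must be careful not to conflate $\wtil\Aa$ with $\Aa$. The only other point requiring a word of justification is the interchange of $\pp_\si$ with the double integral and the differentiation of the resolvent, which is legitimate by the trace-class smoothness of $\si\mapsto\Kaa_{\Ai,\si}$ on $L^2([0,\infty))$, exactly as used in the single-time proof.
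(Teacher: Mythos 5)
Your proposal is correct and follows essentially the paper's own route: the paper disposes of the multi-time case by noting that the single-time argument of Section~\ref{subsection:proof:tacnodeder:FV} goes through verbatim with $\tau$ replaced by $\tau_1$ and $-\tau$ by $-\tau_2$, the Gaussian term being independent of $\si$ and reappearing as the boundary contribution at $\si=+\infty$ when integrating \eqref{rank2:tacnode:FV:multi} back. One caveat: the resolvent identity you quote, $\pp_\si(\Een-\Kaa_{\Ai,\si})^{-1}(x,y)=-(\Een-\Kaa_{\Ai,\si})^{-1}(x,0)\,(\Een-\Kaa_{\Ai,\si})^{-1}(0,y)$, is not the one actually used (and is not correct in the paper's shifted-kernel convention); the correct inputs are \eqref{diff:Rsigma}, i.e.\ $\pp_\si R_{\si}(x,y)=-Q(x)Q(y)$, together with the transport identity \eqref{diff:Rxy}, whose combination \eqref{intparts:FV:4} is precisely what separates the two rank-one pieces $\what p_1\otimes\what p_1$ and $\what p_2\otimes\what p_2$ via \eqref{p1hat:Q} and \eqref{p2hat:R} --- but since you defer to the single-time computation itself, this mislabeling does not affect the validity of your argument.
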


As an offshoot of this theorem, we obtain a Riemann-Hilbert expression for the multi-time extended tacnode kernel.
Indeed the functions $\what p_1$ and $\what p_2$ can be expressed in terms of the top left $2\times 2$ block of the RH matrix $M(z)$ on account of
\eqref{pee:firsttwocolumns}, \eqref{identify:kernels} and \eqref{pphat}. To the best of our knowledge, this is the first time that a RH expression is given for a multi-time extended kernel.

The above results on the multi-time extended tacnode kernel can be proved exactly as in the single-time case. We omit the details. In the remainder of this paper we will not come back on the multi-time case anymore.

\subsection{Airy resolvent formulas for the $4\times 4$ Riemann-Hilbert matrix}

In the proof of Theorem~\ref{theorem:identify:kernels} we will need Airy
resolvent formulas for the entries in the first two columns of the RH matrix
$\what M(z) = \what M(z;r_1,r_2,s_1,s_2,\tau)$. The existence of such formulas
could be anticipated by comparing Theorems \ref{theorem:tacnodeder} and
\ref{theorem:tacnodeder:FV}.

The formulas below will be stated for general values of $r_1,r_2,s_1,s_2,\tau$.
The reader who is only interested in the symmetric case $r_1=r_2$ and $s_1=s_2$
can skip the next paragraph and move directly to \eqref{sigma:symm}.

For general $r_1,r_2>0$ and $s_1,s_2,\tau\in\er$, we define the constants
\begin{eqnarray} \nonumber C &=& (r_1^{-2}+r_2^{-2})^{1/3},
\\ \label{C:RH} \ct &=&
\sqrt{\frac{r_1}{r_2}}\,\exp\left(\frac{r_1^4-r_2^4}{3}\tau^3+2(r_2s_2-r_1s_1)\tau\right),\\
 \nonumber \si  &=& C^{-1}\left(2\left(\frac{s_1}{r_1}+\frac{s_2}{r_2}\right)-(r_1^2+r_2^2)\tau^2\right),
\end{eqnarray} and the functions
\begin{equation}\label{bzx:FV} \begin{array}{l} \displaystyle b_z(x) =
\sqrt{2\pi}r_2^{1/6}\exp\left(-r_2^2 \tau \left(z+C
x\right)\right)\Ai\left(r_2^{2/3}\left(z+Cx+2\frac{s_2}{r_2}\right)\right),
\\
\displaystyle \wtil b_z(x) = \sqrt{2\pi}r_1^{1/6}\exp\left(r_1^2 \tau \left(z-C
x\right)\right)\Ai\left(r_1^{2/3}\left(-z+Cx+2\frac{s_1}{r_1}\right)\right).\end{array}
\end{equation}

The above definitions of $C,\si$ are consistent with our earlier formulas
\eqref{C:FV0} and \eqref{temperature:sigma} under the identification
\eqref{identify:kernels}. Similarly, the formulas for $b_z(x),\wtil b_z(x)$
reduce to the ones in \eqref{b:FV} up to certain multiplicative constants
(independent of $z,x$).

In the symmetric case where $r_1=r_2=1$,
$s_1=s_2=:s$, the above definitions simplify to \begin{eqnarray}\nonumber C&=&2^{1/3},\\
\nonumber\ct &=&1,\\ \label{sigma:symm}\si  &=& 2^{5/3}s -2^{2/3}\tau^2, \\
\nonumber b_z(x) &=& \sqrt{2\pi}\exp\left(-\tau \left(z+2^{1/3}
x\right)\right)\Ai\left(z+2^{1/3}x+2s\right),\\ \nonumber\wtil
b_z(x)&=&b_{-z}(x).\end{eqnarray}

The fact of the matter is

\begin{theorem}\label{theorem:Airyformulas:FV} (Airy resolvent formulas for the $4\times 4$ RH matrix.)
Denote by $\what M_{j,k}(z)$ the $(j,k)$ entry of the RH matrix $\what
M(z)=\what M(z;r_1,r_2,s_1,s_2,\tau)$. Then the entries in the top left
$2\times 2$ block of $\what M(z)$ can be expressed by the formulas
\begin{equation}\label{M11:Airy:op} \begin{array}{l}
\displaystyle\what M_{1,1}(z) = \int_{0}^{\infty}(\Een-\Kaa_{\Ai,\si})^{-1}(x,0)\, \wtil b_z(x)\ud x,\\
\displaystyle\what M_{2,1}(z)= -\ct\int_{0}^{\infty}\!\!\!\int_{0}^{\infty}
(\Een-\Kaa_{\Ai,\si})^{-1}(x,0)\,\Ai(x+y+\si)\wtil b_z(y) \ud x\ud y,\\
\displaystyle\what M_{1,2}(z) =
-\ct^{-1}\int_{0}^{\infty}\!\!\!\int_{0}^{\infty}
(\Een-\Kaa_{\Ai,\si})^{-1}(x,0)\,\Ai(x+y+\si)b_z(y) \ud x\ud y,\\
\displaystyle\what M_{2,2}(z) =
\int_0^{\infty}(\Een-\Kaa_{\Ai,\si})^{-1}(x,0)\, b_z(x) \ud x,
\end{array}\end{equation}
where we use the notations \eqref{Dirac:delta} and \eqref{C:RH}--\eqref{bzx:FV}
(or \eqref{sigma:symm} in the symmetric case). The entries in the bottom left
$2\times 2$ block of $\what M(z)$ can be obtained by combining the above
expressions with equations \eqref{diffeq:pee:3}--\eqref{diffeq:pee:4} below.
\end{theorem}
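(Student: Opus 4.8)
The plan is to derive the Airy resolvent formulas for the top left $2\times 2$ block of $\what M(z)$ by exploiting the characterization of $\what M(z)$ through the RH problem~\ref{rhp:modelM} together with the known connection between the RH matrix $M(z)$ and the $2\times 2$ Flaschka--Newell matrix $\Psi(z)$ for the Hastings--McLeod solution, in the spirit of Baik--Liechty--Schehr~\cite{BLS}. Concretely, I would first recall (from \cite{DKZ}, or re-derive) that the first two columns of $\what M(z)$ satisfy a closed system: the vector-valued functions built from these columns solve a small-size RH problem whose jumps involve only the Airy-type exponentials $\theta_1,\ldots,\theta_4$ restricted to the relevant sectors. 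The key structural fact is that, in the sector $\Omega_2$ around the positive imaginary axis and its analytic continuation, the entries $\what M_{1,1},\what M_{2,1},\what M_{1,2},\what M_{2,2}$ decouple from the remaining entries and can be written as contour integrals of Airy functions against a kernel built from $(\Een-\Kaa_{\Ai,\si})^{-1}$.

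The main steps, in order, would be: (i) set up the RH problem satisfied by the relevant block, using the explicit jump matrices in Figure~\ref{fig:modelRHP} and the asymptotics \eqref{M:asymptotics}–\eqref{def:theta1}, and identify the parameter dictionary $C,\ct,\si$ as in \eqref{C:RH} by matching the exponents $\theta_j$ against the shifted Airy exponents appearing in \eqref{bzx:FV}; (ii) introduce the candidate right-hand sides of \eqref{M11:Airy:op} and verify directly that they have the correct analyticity (they are entire in $z$ after analytic continuation, since $\wtil b_z,b_z$ are entire in $z$ and the resolvent kernel is $z$-independent), the correct jump behaviour across the rays $\Gamma_k$ (using the integral representation \eqref{Airy:kernel:0} of the Airy kernel and the Stokes structure of $\Ai$ on rotated rays, which produces exactly the unipotent jumps in the figure), and the correct $z\to\infty$ asymptotics (using the classical asymptotics of $\Ai$ and of the Airy resolvent kernel $R_\si(x,0)$ as $x\to\infty$, which decays like a Gaussian and hence does not disturb the leading term $I$); (iii) invoke uniqueness in the RH problem~\ref{rhp:modelM} (Proposition~\ref{prop:solv}) to conclude that the candidate equals the true block. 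For the off-diagonal entries $\what M_{2,1},\what M_{1,2}$, the extra factor $\Ai(x+y+\si)$ and the constant $\ct^{\pm1}$ arise from the $2\times 2$ block structure of the jump matrix on $\Gamma_0$ (the matrix $\begin{pmatrix}0&0&1&0\\0&1&0&0\\-1&0&0&0\\0&0&0&1\end{pmatrix}$) and the corresponding reflection coefficient; here the identity \eqref{resolvent:def} relating $\Rop_{\Ai,\si}$ to $\Kaa_{\Ai,\si}(\Een-\Kaa_{\Ai,\si})^{-1}$ is what turns a geometric-series-type expansion into the clean resolvent form.

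An alternative, and possibly cleaner, route — which I would pursue in parallel — is to differentiate both sides of \eqref{M11:Airy:op} with respect to the parameter $s$ (equivalently $\si$) and show that both sides satisfy the same linear ODE in that parameter with the same initial/decay condition. On the RH side this uses the Lax-pair/deformation equations for $M(z)$ from \cite{DKZ,DG}; on the Airy side it uses the standard differentiation formula $\frac{\pp}{\pp\si}(\Een-\Kaa_{\Ai,\si})^{-1} = -(\Een-\Kaa_{\Ai,\si})^{-1}\dot{\Kaa}_{\Ai,\si}(\Een-\Kaa_{\Ai,\si})^{-1}$ together with the rank-one structure of $\dot{\Kaa}_{\Ai,\si}$ coming from $\frac{\pp}{\pp\si}K_{\Ai,\si}(x,y) = -\Ai(x+\si)\Ai(y+\si)$. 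Matching these derivatives is essentially the content of Theorems~\ref{theorem:tacnodeder} and \ref{theorem:tacnodeder:FV}, which is why the existence of the formulas was anticipated there. I expect the main obstacle to be the bookkeeping in step~(ii): verifying that the proposed integral formulas have \emph{exactly} the prescribed jumps on all ten rays $\Gamma_k$ (not merely up to the sector $\Omega_2$), which requires carefully tracking how the Airy kernel integral representation behaves under the rotations $e^{i\varphi_j}$ and matching Stokes phenomena — and, relatedly, justifying the interchange of the $z\to\infty$ limit with the (infinite-interval) integrals, i.e. the uniform decay estimates on $R_\si(x,0)\wtil b_z(x)$ needed for \eqref{M:asymptotics}. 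Once the jump and asymptotic conditions are checked, uniqueness closes the argument immediately; the bottom-left block then follows by applying the first-order relations \eqref{diffeq:pee:3}–\eqref{diffeq:pee:4}, which express the third and fourth rows in terms of $z$-derivatives of the first two.
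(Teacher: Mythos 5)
Your main route has a genuine structural gap: it proposes to verify that the candidate expressions satisfy the jump and asymptotic conditions of RH problem~\ref{rhp:modelM} and then invoke uniqueness (Proposition~\ref{prop:solv}), but this verification is not available with the data you have. The formulas \eqref{M11:Airy:op} (together with \eqref{diffeq:pee:3}--\eqref{diffeq:pee:4}) only produce the \emph{first two columns} of $\what M(z)$, and $\what M$ itself is entire, so the jumps belong to the sectorwise matrix $M(z)$. Those jumps do not close on the first two columns: for instance the jump matrix on $\Gamma_0$ exchanges column~$1$ with column~$3$ (up to sign), and the one on $\Gamma_5$ exchanges column~$2$ with column~$4$, so checking the jump relations for your candidates would require independent formulas for the third and fourth columns, which you do not have (they were obtained only later, in \cite{Kui34}). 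For the same reason the uniqueness statement, which concerns the full $4\times 4$ solution, cannot be invoked; your assertion that the first two columns ``decouple and solve a small-size RH problem'' is exactly the missing ingredient and is not substantiated. Your fallback route (differentiating in $s$ or $\si$ and appealing to Theorems~\ref{theorem:tacnodeder} and \ref{theorem:tacnodeder:FV}) also does not suffice as stated: those theorems identify kernels that are \emph{quadratic} in the entries $p_1,p_2$, and equality of such rank-$2$ kernels does not by itself pin down the individual matrix entries, nor the constants $\ct^{\pm 1}$ in the off-diagonal formulas.

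For comparison, the paper closes the argument with a $z$-ODE characterization rather than an RH characterization: the columns of $\what M$ satisfy the Lax system \eqref{diffeq:pee:1}--\eqref{diffeq:pee:4}, whose coefficients are expressed through the Hastings--McLeod function $q(\si)$ and the Hamiltonian $u(\si)$ via the Painlev\'e formulas for the residue matrix $M_1$ (Theorem~\ref{theorem:M1:nonsymm}); one then verifies by a direct computation with the Tracy--Widom functions $Q,P$ (identities \eqref{diff:Rxy}--\eqref{TW:rel5}, Lemma~\ref{lemma:RQequiv}) that the candidate Airy-resolvent expressions satisfy the \emph{same} ODE system, so by the converse Proposition~\ref{prop:diffeq:pee}(b) they are fixed linear combinations of the columns of $\what M$; finally the combination is identified by the recessive asymptotics of the second column as $z\to+\infty$ along $\er_+$, using \eqref{R:estimate}--\eqref{Airy:asy}. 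If you want to salvage your plan, you should replace the jump-plus-uniqueness step by this ODE-plus-asymptotics step (or supply genuine formulas for all four columns); note also that the decay of $R_\si(x,0)$ used here is exponential, as in \eqref{R:estimate}, not Gaussian, though that point is harmless.
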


Theorem~\ref{theorem:Airyformulas:FV} is proved in
Section~\ref{section:prooftheoremAiryRH}. The proof makes heavy use of the
Tracy-Widom functions defined in Section~\ref{subsection:TW}.

\begin{corollary}\label{cor:AiryRHpee} The first two entries of the vector $\pee(z)$ in
\eqref{pee:firsttwocolumns} are given by
\begin{equation}\begin{array}{l}
\label{p1:Airy:op} \displaystyle p_1(z) =  \int_0^{\infty}(\Een-\Kaa_{\Ai,\si})^{-1}(x,0)\wtil\Aa_{z}(x)\ud x, \\
\displaystyle p_2(z) =
\int_0^{\infty}(\Een-\Kaa_{\Ai,\si})^{-1}(x,0)\Aa_{z}(x)\ud x,
\end{array}\end{equation}
where
\begin{equation}\label{curlyA:FV} \begin{array}{l} \displaystyle\Aa_{z}(x) = b_z(x)
-\ct\int_{0}^{\infty} \Ai(x+y+\si)\wtil b_{z}(y) \ud y,
\\ \displaystyle\wtil\Aa_{z}(x) =
\wtil b_{z}(x)- \ct^{-1}\int_{0}^{\infty} \Ai(x+y+\si)b_{z}(y)\ud y.
\end{array}\end{equation}
\end{corollary}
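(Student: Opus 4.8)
The plan is to obtain the two formulas in \eqref{p1:Airy:op} by simply summing the relevant entries of the $4\times 4$ RH matrix $\what M(z)$, using Theorem~\ref{theorem:Airyformulas:FV} together with the definition \eqref{pee:firsttwocolumns} of the vector $\pee(z)$. Recall that $\pee(z) = \what M(z)(1,1,0,0)^T$, so by construction $p_1(z) = \what M_{1,1}(z)+\what M_{1,2}(z)$ and $p_2(z) = \what M_{2,1}(z)+\what M_{2,2}(z)$. Thus the whole statement is a bookkeeping consequence of the four Airy resolvent formulas in \eqref{M11:Airy:op}, provided one recognizes the combinations that arise.

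First I would treat $p_1$. Adding the expressions for $\what M_{1,1}(z)$ and $\what M_{1,2}(z)$ from \eqref{M11:Airy:op} gives
\[
p_1(z) = \int_0^{\infty}(\Een-\Kaa_{\Ai,\si})^{-1}(x,0)\left(\wtil b_z(x) - \ct^{-1}\int_0^{\infty}\Ai(x+y+\si)\,b_z(y)\,\ud y\right)\ud x,
\]
where I have pulled the common factor $(\Een-\Kaa_{\Ai,\si})^{-1}(x,0)$ outside and combined the $x$-integrations; interchanging the order of the $x$- and $y$-integrations in the double integral for $\what M_{1,2}$ is justified by the rapid (super-exponential) decay of the Airy function, which makes all integrals here absolutely convergent. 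The inner bracket is exactly $\wtil\Aa_z(x)$ as defined in \eqref{curlyA:FV}, so this is precisely the claimed formula for $p_1$. The computation for $p_2$ is entirely analogous: summing the formulas for $\what M_{2,1}(z)$ and $\what M_{2,2}(z)$ produces
\[
p_2(z) = \int_0^{\infty}(\Een-\Kaa_{\Ai,\si})^{-1}(x,0)\left(b_z(x) - \ct\int_0^{\infty}\Ai(x+y+\si)\,\wtil b_z(y)\,\ud y\right)\ud x,
\]
and the bracket is $\Aa_z(x)$ from \eqref{curlyA:FV}.

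There is essentially no obstacle here: the only point requiring a word is that the functions $\Aa_z$ and $\wtil\Aa_z$ in \eqref{curlyA:FV} are defined with exactly the constants $\ct$, $\ct^{-1}$ and the shift $\si$ that already appear in \eqref{M11:Airy:op}, so no matching of normalizations is needed. One should also note that $\Aa_z$ and $\wtil\Aa_z$ in \eqref{curlyA:FV} are the general-parameter analogues of $\Aa_{\tau,z}$ and $\wtil\Aa_{\tau,z}$ in \eqref{Acal:FV}, consistent with the reductions already recorded after \eqref{bzx:FV} (the factors $\lam^{\pm 1/6}$ there being absorbed into $\ct^{\mp 1}$ under the identification \eqref{identify:kernels}). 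Hence the corollary follows immediately from Theorem~\ref{theorem:Airyformulas:FV}, and I would present the argument in just a few lines.
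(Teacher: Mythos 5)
Your argument is correct and is exactly the intended one: the paper gives no separate proof of Corollary~\ref{cor:AiryRHpee} because, as in your write-up, it is immediate from \eqref{pee:firsttwocolumns} and linearity by summing the entries of the first two columns in \eqref{M11:Airy:op} and recognizing the combinations \eqref{curlyA:FV} (compare the remark in Section~\ref{section:prooftheoremAiryRH} that $p_j(z)=\what N_{j,1}(z)+\what N_{j,2}(z)$ is compatible with \eqref{p1:Airy:op}). Your brief justification of the interchange of integrals via the decay of the Airy function is a harmless addition.
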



\subsection{Differential equations for the columns of $M(z)$}

In the proof of Theorem~\ref{theorem:Airyformulas:FV}, we will use the
differential equations for the columns of the RH matrix $\what M(z)$ (or
$M(z)$). Interestingly, the coefficients in these differential equations
contain the Hastings-McLeod solution $q(x)$ to Painlev\'e~II. We also need the
associated \emph{Hamiltonian}
\begin{equation} \label{def:Hamiltonian}
    u(x) := (q'(x))^2-xq^2(x)-q^4(x).
\end{equation}

\begin{proposition}\label{prop:diffeq:pee}
(System of differential equations.)

(a) Let the vector $\mm(z)=\mm(z;r_1,r_2,s_1,s_2,\tau)$ be one of the columns
of $\what M(z)$, or a fixed linear combination of them, and denote its entries
by $m_j(z)$, $j=1,\ldots,4$. Then with the prime denoting the derivative with
respect to $z$, we have
\begin{align}
\nonumber r_1^{-2} m_1'' &= 2\tau m_1'+C^2\ct^{-1} q(\si)m_2'
\\ \label{diffeq:pee:1} &
\qquad\qquad + \left[C q^2(\si)- z+2s_1/r_1 -r_1^2\tau^2\right] m_1
-\left[C\ct^{-1} q'(\si)\right]m_2, \\
\nonumber r_2^{-2} m_2'' &= -C^2\ct q(\si) m_1'- 2\tau m_2'
\\ \label{diffeq:pee:2} &
\qquad\qquad +\left[C q^2(\si)+z+2s_2/r_2-r_2^2\tau^2\right] m_2
-\left[C\ct q'(\si)\right]m_1,\\
\label{diffeq:pee:3} r_1im_3 &= m_1'-(C^{-1} u(\si)-s_1^2+r_1^2\tau) m_1-C^{-1}\ct^{-1}q(\si) m_2, \\
\label{diffeq:pee:4} r_2im_4 &= m_2'+C^{-1}\ct q(\si)
m_1+(C^{-1}u(\si)-s_2^2+r_2^2\tau) m_2,
\end{align}
where the constants $C,\ct,\si$ are defined in \eqref{C:RH} and we denote by
$q,u$ the Hastings-McLeod function and the associated Hamiltonian.

(b) Conversely, any vector $\mm(z)$ that solves
\eqref{diffeq:pee:1}--\eqref{diffeq:pee:4} is a fixed (independent of $z$)
linear combination of the columns of $\what M(z)$.
\end{proposition}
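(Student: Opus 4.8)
The plan is to prove part~(a) by the standard isomonodromy/Lax-pair argument and then deduce part~(b) by a dimension count. For part~(a), first note that since all jump matrices $J_k$ in RH problem~\ref{rhp:modelM} are independent of $z$ and their product is the identity, the matrix $\what M(z)$ — the analytic continuation of $M|_{\Omega_2}$ across every ray — has no jumps whatsoever; being bounded at $0$ by item~(4), it is entire. A Liouville argument applied to $\det\what M(z)$, which is entire, bounded at $0$, and $\sim e^{2(r_1^2-r_2^2)\tau z}$ as $z\to\infty$ by \eqref{M:asymptotics}, shows that $\det\what M(z)=e^{2(r_1^2-r_2^2)\tau z}$, which never vanishes. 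Hence $A(z):=\bigl(\pa_z\what M(z)\bigr)\what M(z)^{-1}$ is entire.

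Next I would read off the structure of $A$ from \eqref{M:asymptotics}. Writing $\what M=P(z)N(z)$ with $P=I+M_1z^{-1}+M_2z^{-2}+\cdots$ and $N(z)=\diag((-z)^{-1/4},z^{-1/4},(-z)^{1/4},z^{1/4})\,\Aa\,\diag(e^{\theta_j(z)})$, one has $A=P'P^{-1}+P(N'N^{-1})P^{-1}$. Each $\theta_j'(z)$ is a combination of $(\mp z)^{\pm1/2}$ and a constant, and the two $2\times2$ blocks of $\Aa$ in \eqref{mixing:matrix} produce the cancellations $\theta_1'+\theta_3'=2r_1^2\tau$ and $\theta_2'+\theta_4'=-2r_2^2\tau$; after conjugating $\diag(\theta_j')$ by $\Aa$ and by the diagonal scaling, only terms linear in $z$ survive, and these sit in the lower-left $2\times2$ block. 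Consequently $N'N^{-1}$, and therefore $A$, equals $A_1z+A_0$, a polynomial of degree $\le1$. Matching the coefficients of $z^1$ and $z^0$ in the expansion of $(\pa_z\what M)\what M^{-1}$ gives: $A_1$ has nonzero entries only in positions $(3,1)$ and $(4,2)$; $(A_0)_{1,3}$ and $(A_0)_{2,4}$ are the constants $ir_1$ and $ir_2$, and $(A_0)_{1,4}=(A_0)_{2,3}=0$; while the remaining entries of $A_0$ are concrete expressions in $M_1$ (the vanishing of the $z^{-1}$-coefficient of $A$ moreover ties $M_2$ to $M_1$).

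What remains is to identify the non-explicit entries of $A_0$ with $C^{-1}u(\si)$, $C^{\pm1}\ct^{\pm1}q(\si)$, $C\ct^{\pm1}q'(\si)$, etc., where $q$ is the Hastings–McLeod solution \eqref{def:Painleve2}--\eqref{def:HastingMcLeod}, $u$ its Hamiltonian \eqref{def:Hamiltonian}, and $C,\ct,\si$ are as in \eqref{C:RH}. For this I would introduce the companion deformation: differentiating $\what M$ with respect to a parameter that moves $\si$ (say $s_1$, the others held fixed) yields, by the same reasoning, a polynomial matrix $B(z)$, and the compatibility (zero-curvature) identity $\pa_{s_1}A-\pa_z B=[B,A]$, read off power by power in $z$, collapses to the Painlevé~II equation for the entry of $A_0$ playing the role of $q$, with $u$ the associated Hamiltonian; the Hastings–McLeod solution is singled out by the $\si\to+\infty$ regime, in which the RH problem degenerates to an explicitly solvable one and $q(\si)\sim\Ai(\si)$. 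This reproduces, with $\tau$ carried along as an extra time variable, the analysis of \cite{DKZ} (for $\tau=0$) and \cite{DG} (in the symmetric case, with $\tau$); compare the discussion around Lemma~\ref{lemma:DG:2}. Once $A_0$ is identified, the first two rows of $\mm'=(A_1z+A_0)\mm$, solved for $m_3$ and $m_4$, are precisely \eqref{diffeq:pee:3}--\eqref{diffeq:pee:4}; differentiating those, substituting the third and fourth rows of the system, and eliminating $m_3,m_4$ yields \eqref{diffeq:pee:1}--\eqref{diffeq:pee:2} after dividing by $r_1^2$ and $r_2^2$. I expect this coefficient identification — and in particular the constant-chasing needed to confirm that the transcendent appearing is the Hastings–McLeod solution evaluated at $\si$, now with $\tau$ present — to be the main obstacle, even though conceptually it is the well-trodden isomonodromy route; the claim for a fixed linear combination of columns is then immediate from linearity of the system.

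For part~(b), observe that \eqref{diffeq:pee:3}--\eqref{diffeq:pee:4} determine $m_3,m_4$ from $m_1,m_2,m_1',m_2'$, while \eqref{diffeq:pee:1}--\eqref{diffeq:pee:2} form a $2\times2$ system of second-order linear ODEs in $(m_1,m_2)$ with polynomial — hence entire — coefficients; equivalently the whole system is a first-order linear system of dimension four with entire coefficients, so its solution space is four-dimensional. By part~(a) the four columns of $\what M(z)$ solve the system, and they are linearly independent because $\det\what M(z)=e^{2(r_1^2-r_2^2)\tau z}\neq0$; hence they form a basis of the solution space, so every solution $\mm(z)$ is a $z$-independent linear combination of the columns of $\what M(z)$, as claimed.
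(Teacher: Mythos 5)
Your outline is the paper's own isomonodromy route: the constant jumps plus the Liouville argument give $\pa_z\what M\cdot\what M^{-1}=A_1z+A_0$ (the paper's $U$ in \eqref{Lax:zeta:nonsymm}, with $A_0$ written in terms of the residue matrix $M_1$ of \eqref{M1}); eliminating $m_3,m_4$ from the first-order system yields \eqref{diffeq:pee:1}--\eqref{diffeq:pee:4} exactly as in Section~\ref{subsection:proof:diffeqpee}; and your part~(b) is a sound variant (four-dimensional solution space plus $\det\what M\neq0$, the latter correctly computed as $e^{2(r_1^2-r_2^2)\tau z}$) of the paper's one-line argument $\pa_z[\what M^{-1}\mm]=\what M^{-1}(U-U)\mm=0$. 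So the skeleton, and the derivation of the structure of $A_1,A_0$, match the paper.

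The step you yourself flag as the main obstacle is, however, materially thinner than what is actually needed, and as stated one claim is not accurate in the regime this proposition addresses (nonsymmetric, $\tau\neq0$). The compatibility identity for the single $s_1$-deformation does \emph{not} collapse to Painlev\'e~II for an entry of $A_0$: in the general case the relevant entries $d,\wtil d$ of $M_1$ carry the factor $\ct^{\mp1}$ of \eqref{C:RH}, which depends exponentially on $s_1,s_2,\tau$, so no single entry solves Painlev\'e~II; the paper must use both the $s$- and $\tau$-Lax equations \eqref{Lax:s:nonsymm}--\eqref{Lax:tau:nonsymm}, the symmetry relations of Lemma~\ref{lemma:symmetry:M1}, and the exponential ansatz $d=e^hg$, $\wtil d=e^{-h}g$ (with \eqref{comp:ansatz:s}, \eqref{comp:ansatz:tau}) before a Painlev\'e~II equation in the combined variable $\si$, with the correct constants $C$ and $\ct$, emerges — this is precisely the content of Theorem~\ref{theorem:M1:nonsymm} and Lemmas~\ref{lemma:DG:1}--\ref{lemma:DG:2}. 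Moreover, the paper pins down the Hastings--McLeod solution not by a fresh $\si\to+\infty$ degeneration of the RH problem (a steepest-descent analysis that is not carried out here for general $\tau$ and is only available in \cite{DKZ} at $\tau=0$), but by anchoring at the known $\tau=0$ formulas of \cite[Th.~2.4]{DKZ} and propagating in $\tau$ through the compatibility ODEs. Your plan can be completed along these lines, and you point to the right sources, but as written the identification of $A_0$ with $q(\si),q'(\si),u(\si)$ — which is the entire substance of the coefficients in \eqref{diffeq:pee:1}--\eqref{diffeq:pee:4} — is asserted rather than proved, so the proposal has a genuine gap exactly at the point where the paper invokes Theorem~\ref{theorem:M1:nonsymm}.
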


Proposition~\ref{prop:diffeq:pee} is proved in
Section~\ref{subsection:proof:diffeqpee} with the help of Lax pair
calculations. There are similar differential equations with respect to the
parameters $s_1$, $s_2$ or $\tau$ but they will not be needed.

\subsection{Outline of the paper}

The remainder of this paper is organized as follows. In
Section~\ref{subsection:proof:tacnodeder} we establish
Theorem~\ref{theorem:tacnodeder} about the derivative of the RH tacnode kernel.
In Section~\ref{section:proofsAFJVV} we prove Proposition~\ref{prop:FVkernel}
and Theorems~\ref{theorem:identify:kernels} and \ref{theorem:tacnodeder:FV}
about the Ferrari-Vet\H o tacnode kernel. In
Section~\ref{section:prooftheoremAiryRH} we prove
Theorem~\ref{theorem:Airyformulas:FV} about the Airy resolvent formulas for the
entries of the RH matrix $\what M(z)$. Finally, in Section~\ref{section:RHLax}
we use Lax pair calculations to prove Proposition~\ref{prop:diffeq:pee}.

\section{Proof of Theorem~\ref{theorem:tacnodeder}}
\label{subsection:proof:tacnodeder}

Throughout the proof we use the parametrization $s_j=\si_j s$ with $\si_j$
fixed, $j=1,2$. We also assume $r_1,r_2>0$ to be fixed. The RH matrix $\what
M(z)=\what M(z;s,\tau)$ satisfies the differential equation
\begin{equation}\label{Lax:s0}
\frac{\pp}{\pp s}\what M(z) =V(z)\what M(z),
\end{equation}
for a certain coefficient matrix $V(z)=V(z;s,\tau)$. This is described in more
detail in Section~\ref{subsection:Lax}. At this moment, we only need to know
that
\begin{equation}\label{V:specific}
V(z)= -2iz\begin{pmatrix} 0 & 0 & 0 & 0 \\
0 & 0 & 0 & 0 \\
\si_1 & 0 & 0 & 0 \\
0 & \si_2 & 0 & 0
\end{pmatrix}+V(0),
\end{equation}
where the matrix $V(0)$ is independent of $z$. We will also need the symmetry relation
\begin{eqnarray}\label{symmetry:inversetranspose:0}
\what M^{-1}(z;s,\tau) &=& K^{-1} \what M^T(z;s,-\tau) K,
\end{eqnarray}
where the superscript ${}^T$ denotes the transpose and
\begin{equation}\label{permmces:K0}
K = \begin{pmatrix} 0 & I_2 \\
-I_2 & 0 \end{pmatrix} \end{equation}
with $I_2$ denoting the identity matrix of
size $2\times 2$. This symmetry relation is a consequence of Lemma~\ref{lemma:symmetries} below.

We are now ready to prove Theorem~\ref{theorem:tacnodeder}. Abbreviating $\what M(z):=\what M(z;s,\tau)$ for the moment, we start by calculating
\begin{eqnarray*}
\frac{\pp}{\pp s}\left[\what M^{-1}(v) \what M(u)\right]
&=& \what M^{-1}(v)\left(\frac{\pp}{\pp s}\left[\what M(u)\right]\what M^{-1}(u)-\frac{\pp}{\pp s}\left[\what M(v)\right]\what M^{-1}(v)\right)\what M(u) \\
&=& \what M^{-1}(v)(V(u)-V(v) ) \what M(u) \\
&=& -2i(u-v)\what M^{-1}(v)\begin{pmatrix} 0 & 0\\
\Sigma & 0
\end{pmatrix}\what M(u),
\end{eqnarray*}
with $\Sigma:=\diag(\si_1,\si_2)$, where the last equality follows by \eqref{V:specific}. With the help of the above calculation we now obtain
\begin{eqnarray*}  \frac{\pp}{\pp s} K\tac(u,v;s,\tau) &=&
\frac{\pp}{\pp s} \frac{1}{2\pi i(u-v)}
\begin{pmatrix} 0&0&1&1\end{pmatrix}
\what M^{-1}(v;s,\tau) \what M(u;s,\tau)
\begin{pmatrix} 1&1&0&0\end{pmatrix}^T
\\
&=& -\frac{1}{\pi}\begin{pmatrix} 0&0&1&1\end{pmatrix}\what M^{-1}(v;s,\tau)\begin{pmatrix} 0 & 0\\
\Sigma & 0
\end{pmatrix}\what M(u;s,\tau)\begin{pmatrix} 1&1&0&0\end{pmatrix}^T\\
&=& -\frac{1}{\pi}\begin{pmatrix} 1&1&0&0\end{pmatrix}\what M^{T}(v;s,-\tau)\begin{pmatrix} \Sigma & 0\\
0 & 0
\end{pmatrix}\what M(u;s,\tau)\begin{pmatrix} 1&1&0&0\end{pmatrix}^T\\
&=& -\frac{1}{\pi}\pee^T(v;s,-\tau) \begin{pmatrix}
\Sigma & 0 \\
0&0
\end{pmatrix}\pee(u;s,\tau),
\end{eqnarray*}
where the third equality follows from
\eqref{symmetry:inversetranspose:0} and the fourth one from \eqref{pee:firsttwocolumns}. This proves
\eqref{rank2:tacnode:RH}. By integrating this equality, we obtain
\eqref{integrated:kernel}, due to the fact that the entries of $\pee(z;s,\tau)$
go to zero for $s\to +\infty$ if $\si_1,\si_2>0$. The latter fact follows from
\cite[Sec.~3]{DKZ} if $\tau=0$ and is established similarly for general
$\tau\in\er$. This ends the proof of Theorem~\ref{theorem:tacnodeder}. $\bol$

\section{Proofs of Proposition~\ref{prop:FVkernel} and Theorems~\ref{theorem:tacnodeder:FV}, \ref{theorem:identify:kernels}}
\label{section:proofsAFJVV}

\subsection{Proof of Proposition~\ref{prop:FVkernel}}
\label{subsection:proof:FVkernel}

Denote by $\vecA_{\si}$ the operator on $L^2([0,\infty))$ that acts on the
function $f\in L^2([0,\infty))$ by the rule
\begin{equation}\label{operator:A1}
\left[\vecA_{\si} f\right](x) = \int_0^{\infty} \Ai(x+y+\si) f(y)\ud y.
\end{equation}
Observe that
\begin{equation}\label{operator:A2} \vecA_{\si}^2 =
\Kaa_{\Ai,\si},
\end{equation}
on account of \eqref{Airy:kernel}.

Now the kernel $\mathcal L\tac(u,v)$ in \cite[Eq.~(5)]{FV} in the single time
case $\tau_1=\tau_2=\tau$ has the form\footnote{The notations $\si$,
$\wtil\si$, $b_{\tau,\si+\xi}^{\lam}(x+\wtil\si)$,
$B_{\tau,\si+\xi}^{\lam}(x+\wtil\si)$,
$b_{\lam^{1/3}\tau,\lam^{2/3}\si-\lam^{1/6}\xi}^{\lam^{-1}}(x+\wtil\si)$,
$B_{\lam^{1/3}\tau,\lam^{2/3}\si-\lam^{1/6}\xi}^{\lam^{-1}}(x+\wtil\si)$ in
\cite{FV} correspond to our notations $\Sigma$, $\si$, $\lam^{1/6}\wtil
b_{-\tau,\xi}(x)$, $\left[\vecA_{\si} b_{-\tau,\xi}\right](x)$,
$\lam^{-1/6}b_{-\tau,\xi}(x)$, $\left[\vecA_{\si} \wtil
b_{-\tau,\xi}\right](x)$, respectively. The notation $K_{\Ai}^{(-\tau,\tau)}(\si+\xi_1,\si+\xi_2)$ in \cite[Eq.~(11)]{FV} corresponds to the first term in the right
hand side of \eqref{FVkernel:long}.}
\begin{eqnarray}\nonumber C^{-1}\mathcal L\tac(u,v;\si,\tau)
&=&
\int_0^{\infty}b_{\tau,u}(x)b_{-\tau,v}(x)\ud x\\
\nonumber &&+\int_0^{\infty}\!\!\!\int_0^{\infty}
(\Een-\Kaa_{\Ai,\si})^{-1}(x,y)\left[\vecA_{\si}
b_{\tau,u}\right](x)\left[\vecA_{\si}
b_{-\tau,v}\right](y)\ud x\ud y\\
\nonumber&& -\lam^{1/6}\int_0^{\infty}\!\!\!\int_0^{\infty}
(\Een-\Kaa_{\Ai,\si})^{-1}(x,y) \left[\vecA_{\si}
b_{\tau,u}\right](x)\wtil b_{-\tau,v}(y)\ud x\ud y\\
\nonumber&& +
\lam^{1/3}\int_0^{\infty}\wtil b_{\tau,u}(x)\wtil b_{-\tau,v}(x)\ud x\\
\nonumber&& +\lam^{1/3}\int_0^{\infty}\!\!\!\int_0^{\infty}
(\Een-\Kaa_{\Ai,\si})^{-1}(x,y)\left[\vecA_{\si} \wtil
b_{\tau,u}\right](x)\left[\vecA_{\si} \wtil
b_{-\tau,v}\right](y)\ud x\ud y\\
\label{FVkernel:long}&&-\lam^{1/6}\int_0^{\infty}\!\!\!\int_0^{\infty}
(\Een-\Kaa_{\Ai,\si})^{-1}(x,y) \left[\vecA_{\si} \wtil
b_{\tau,u}\right](x)b_{-\tau,v}(y)\ud x\ud y,
\end{eqnarray}
where again $C=(1+\lam^{-1/2})^{1/3}$ and \eqref{Dirac:delta}.
The second term in the right hand side of
\eqref{FVkernel:long} equals
\begin{multline}\label{FVkernel:term2}
\int_0^{\infty}\!\!\!\int_0^{\infty} (\Een-\Kaa_{\Ai,\si})^{-1}(x,y)
\left[\vecA_{\si} b_{\tau,u}\right](x)\left[\vecA_{\si}b_{-\tau,v}\right](y)\ud x\ud y \\
= \int_0^{\infty}\!\!\!\int_0^{\infty} (\Een-\Kaa_{\Ai,\si})^{-1}(x,y)\
b_{\tau,u}(x)b_{-\tau,v}(y)\ud x\ud y-\int_0^{\infty}
b_{\tau,u}(x)b_{-\tau,v}(x)\ud x,
\end{multline}
where we used that
$$ \vecA_{\si}(\Een-\Kaa_{\Ai,\si})^{-1}\vecA_{\si} = (\Een-\Kaa_{\Ai,\si})^{-1}\Kaa_{\Ai,\si} = (\Een-\Kaa_{\Ai,\si})^{-1}-\Een, $$
on account of \eqref{operator:A2}. Next, the third term in the right hand
side of \eqref{FVkernel:long} can be written as
\begin{multline}\label{FVkernel:term3}  -\lam^{1/6}\int_0^{\infty}\!\!\!\int_0^{\infty}
(\Een-\Kaa_{\Ai,\si})^{-1}(x,y) \left[\vecA_{\si}
b_{\tau,u}\right](x)\wtil b_{-\tau,v}(y)\ud x\ud y\\
= -\lam^{1/6}\int_0^{\infty}\!\!\!\int_0^{\infty}
(\Een-\Kaa_{\Ai,\si})^{-1}(x,y)\ b_{\tau,u}(x)\left[\vecA_{\si}\wtil
b_{-\tau,v}\right](y)\ud x\ud y,
\end{multline}
since the operators $(\Een-\Kaa_{\Ai,\si})^{-1}$ and $\vecA_{\si}$ commute.
Inserting \eqref{FVkernel:term2}--\eqref{FVkernel:term3} for the second and third
term in the right hand side of \eqref{FVkernel:long}, we obtain
\begin{eqnarray}\nonumber & & C^{-1}\mathcal L\tac(u,v;\si,\tau)
\\ \nonumber &=&
\lam^{1/3}\int_0^{\infty}\wtil b_{\tau,u}(x)\wtil b_{-\tau,v}(x)\ud x\\
\nonumber && +\int_0^{\infty}\!\!\!\int_0^{\infty}
(\Een-\Kaa_{\Ai,\si})^{-1}(x,y)\ b_{\tau,u}(x)b_{-\tau,v}(y)\ud x\ud y\\
\nonumber && -\lam^{1/6}\int_0^{\infty}\!\!\!\int_0^{\infty}
(\Een-\Kaa_{\Ai,\si})^{-1}(x,y)\ b_{\tau,u}(x)\left[\vecA_{\si}\wtil b_{-\tau,v}\right](y)\ud x\ud y\\
\nonumber&& +\lam^{1/3}\int_0^{\infty}\!\!\!\int_0^{\infty}
(\Een-\Kaa_{\Ai,\si})^{-1}(x,y)\left[\vecA_{\si} \wtil
b_{\tau,u}\right](x)\left[\vecA_{\si} \wtil
b_{-\tau,v}\right](y)\ud x\ud y\\
\label{FVkernel:longbis}&&-\lam^{1/6}\int_0^{\infty}\!\!\!\int_0^{\infty}
(\Een-\Kaa_{\Ai,\si})^{-1}(x,y) \left[\vecA_{\si} \wtil
b_{\tau,u}\right](x)b_{-\tau,v}(y)\ud x\ud y,
\end{eqnarray}
which is equivalent to the desired result \eqref{tacnode:FV}.
 $\bol$

\subsection{Tracy-Widom functions and their properties}
\label{subsection:TW}

In some of the remaining proofs we need the functions
\begin{equation}\label{Q:Airy:def}
\begin{array}{l}\displaystyle Q(x) = \int_0^{\infty}(\Een-\Kaa_{\Ai,\si})^{-1}(x,y)\Ai(y+\si)\ud y\\
\displaystyle P(x) =
\int_0^{\infty}(\Een-\Kaa_{\Ai,\si})^{-1}(x,y)\Ai'(y+\si)\ud y,
\end{array}\end{equation}
with the usual notation \eqref{Dirac:delta}. Note that $Q,P$ depend on $\si$
but we omit this dependence from the notation. The functions $Q,P$ originate
from the seminal paper of Tracy-Widom \cite{TW1}. Below we list some of their
properties. These properties can all be found in \cite{TW1}, see also
\cite[Sec.~3.8]{AGZ} for a text book treatment. One should take into account
that our function $R(x,y)=R_{\si}(x,y)$ equals the one in \cite{TW1},
\cite[Sec.~3.8]{AGZ} at the shifted arguments $x+\si$, $y+\si$, and similarly
for $Q(x)$ and $P(x)$.

\begin{lemma} The functions $Q,P$ and $R=R_{\si}$ satisfy the differential
equations
\begin{eqnarray}
\label{diff:Rxy}\left(\frac{\pp}{\pp x}+\frac{\pp}{\pp y}\right)R(x,y) &=&
R(x,0)R(0,y)-Q(x)Q(y), \\
\label{diff:Rsigma}\frac{\pp}{\pp \si} R(x,y) &=& -Q(x)Q(y),\\
\label{diff:Qx} Q'(x) &=& P(x)+q R(x,0)- u Q(x), \\
\label{diff:Px}  P'(x) &=& (x+\si-2v)Q(x)+p R(x,0)+uP(x),
\end{eqnarray}
where
\begin{eqnarray}
\label{TW:q} q&=&Q(0)\\
\label{TW:p} p&=&P(0)\\
\label{TW:u} u&=&\int_{0}^{\infty} Q(x)\Ai(x+\si)\ud x\\
\label{TW:v} v&=&\int_{0}^{\infty} Q(x)\Ai'(x+\si)\ud x =\int_{0}^{\infty}
P(x)\Ai(x+\si)\ud x.
\end{eqnarray}
\end{lemma}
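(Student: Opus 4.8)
The statement to prove is the lemma collecting the Tracy--Widom differential equations \eqref{diff:Rxy}--\eqref{diff:Px} for $Q$, $P$, $R=R_\si$. The plan is to derive all four identities from the single master fact that $\Kaa_{\Ai,\si}$ is the square of the Hankel composition operator $\vecA_\si$ with kernel $\Ai(x+y+\si)$ (equation \eqref{operator:A2}), together with the elementary commutator structure of $\vecA_\si$ with the differentiation operator $\ct_x := \pp/\pp x$. The key algebraic observation is that, writing $A(x,y)=\Ai(x+y+\si)$, one has $(\ct_x+\ct_y)A = \Ai'(x+y+\si)$, and more importantly the boundary term: for operators on $L^2([0,\infty))$, $[\ct_x,\vecA_\si]$ picks up a rank-one Dirac contribution at the endpoint $0$. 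I will set this up carefully first, since every subsequent computation is an instance of it.

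First I would prove \eqref{diff:Rsigma}, which is the cleanest. Differentiating $R = (\Een-\Kaa_{\Ai,\si})^{-1}-\Een$ in $\si$ gives $\pp_\si R = (\Een-\Kaa)^{-1}(\pp_\si \Kaa)(\Een-\Kaa)^{-1}$; since $\pp_\si A(x,y) = \Ai'(x+y+\si)$ and $\Kaa = \vecA_\si^2$, we get $\pp_\si \Kaa = (\pp_\si\vecA_\si)\vecA_\si + \vecA_\si(\pp_\si\vecA_\si)$. The crucial point is that $\pp_\si \vecA_\si = -\vecA_\si'$ where $\vecA_\si'$ has kernel $\pp_x\Ai(x+y+\si)$ restricted appropriately, and an integration by parts in the defining integral converts the $\si$-derivative of the Airy argument into an $x$-derivative plus an endpoint term at $y=0$; this endpoint term is exactly what produces the rank-one form $-Q(x)Q(y)$ after conjugating by $(\Een-\Kaa)^{-1}$ and recognizing $Q(x) = \int_0^\infty (\Een-\Kaa)^{-1}(x,y)\Ai(y+\si)\ud y$ from \eqref{Q:Airy:def}. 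Then \eqref{diff:Rxy} follows by the same mechanism: $(\ct_x+\ct_y)$ applied to the resolvent kernel yields $\pp_\si R$ plus the boundary terms at $x=0$ and $y=0$, and the boundary terms assemble into $R(x,0)R(0,y)$; combined with \eqref{diff:Rsigma} this gives the stated formula.

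Next I would establish \eqref{diff:Qx} and \eqref{diff:Px}. Differentiating $Q(x) = \int_0^\infty (\Een-\Kaa)^{-1}(x,y)\Ai(y+\si)\ud y$ in $x$, I move the $x$-derivative onto the kernel, use $(\ct_x+\ct_y)(\Een-\Kaa)^{-1}(x,y) = (\Een-\Kaa)^{-1}(x,0)(\Een-\Kaa)^{-1}(0,y) - Q(x)Q(y)$ from \eqref{diff:Rxy}, then integrate the $\ct_y$-part by parts against $\Ai(y+\si)$. This produces (i) a term $R(x,0)\cdot(\text{something at }y=0)$ which after identifying constants is $q R(x,0)$ with $q=Q(0)$; (ii) a term $-u Q(x)$ where $u = \int_0^\infty Q(y)\Ai(y+\si)\ud y$ as in \eqref{TW:u}; and (iii) a term where $\ct_y$ hits $\Ai(y+\si)$ giving $\Ai'(y+\si)$, hence $P(x)$ from its definition. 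Collecting gives \eqref{diff:Qx}. The computation for $P'(x)$ is structurally identical but now $\ct_y$ hits $\Ai'(y+\si)$, producing $\Ai''(y+\si) = (y+\si)\Ai(y+\si)$ by the Airy equation; the shift $y\mapsto x$ after the resolvent action and the bookkeeping of the extra $\si$ and the boundary constants $p=P(0)$, $v$ yield the coefficient $(x+\si-2v)$, the term $pR(x,0)$, and $+uP(x)$, i.e.\ \eqref{diff:Px}. The consistency of the two expressions for $v$ in \eqref{TW:v} comes from the symmetry $R(x,y)=R(y,x)$ and self-adjointness of $(\Een-\Kaa)^{-1}$.

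The main obstacle is bookkeeping the endpoint (Dirac) contributions correctly: because all operators act on the half-line $[0,\infty)$ rather than $\er$, every integration by parts in $x$ or $y$ generates boundary terms at $0$, and it is precisely the failure of $\ct_x$ to commute with $\vecA_\si$ on $L^2([0,\infty))$ that creates the rank-one right-hand sides. One must be careful that the decay of $\Ai$ and its derivatives at $+\infty$ kills the boundary terms there, that $(\Een-\Kaa_{\Ai,\si})^{-1}$ is bounded with a smooth kernel (so the manipulations are legitimate), and that the various constants $q,p,u,v$ in \eqref{TW:q}--\eqref{TW:v} are read off consistently. Since all of this is classical Tracy--Widom material --- and the excerpt explicitly says these properties ``can all be found in \cite{TW1}'' and \cite[Sec.~3.8]{AGZ}, with only the shift by $\si$ in the arguments to track --- I would, in the actual write-up, simply cite those references and record the shift convention rather than reproduce the derivation in full; but the above is the route one would follow to verify it directly.
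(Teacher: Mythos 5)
Your proposal is correct and ultimately coincides with the paper's treatment: the paper gives no derivation of these identities but simply cites Tracy--Widom \cite{TW1} and \cite[Sec.~3.8]{AGZ}, noting only that $R$, $Q$, $P$ here are the classical objects evaluated at the shifted arguments $x+\si$, $y+\si$, which is exactly what you propose to do in the final write-up. Your preliminary sketch of the commutator/boundary-term derivation is the standard Tracy--Widom argument and is sound in outline, so it adds verification value but does not change the approach.
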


Note that $q,p,u,v$ are all functions of $\si$, although we do not show this in
the notation. They satisfy the following differential equations with respect to
$\si$ \cite{TW1}, \cite[Sec.~3.8]{AGZ}
\begin{eqnarray}
\label{TW:rel1} q' &=& p-qu \\
\label{TW:rel2} p' &=& \si q+pu-2qv \\
\label{TW:rel3} u' &=& -q^2\\
\label{TW:rel4} v' &=& -pq.
\end{eqnarray}
It is known that $q=q(\si)$ is the Hastings-McLeod solution to the Painlev\'
e~II equation \eqref{def:Painleve2}--\eqref{def:HastingMcLeod}. Moreover,
$u=u(\si)$ is the Hamiltonian \eqref{def:Hamiltonian}, and
\begin{eqnarray} \label{TW:rel5} 2v &=& u^2-q^2.
\end{eqnarray}

Finally, we establish the following lemma.

\begin{lemma}\label{lemma:RQequiv}
For any function $b$ on $[0,\infty)$ we have
\begin{equation}\label{operator:Airy:1} \int_{0}^{\infty}Q(x)b(x)\ud x = \int_{0}^{\infty}\!\!\!
\int_{0}^{\infty} (\Een-\Kaa_{\Ai,\si})^{-1}(x,0)\,\Ai(x+y+\si) b(y)\ud x\ud y
\end{equation}
and \begin{equation}\label{operator:Airy:2}  \int_0^{\infty}\!\!\!
\int_{0}^{\infty}Q(x)\Ai(x+y+\si)b(y)\ud x\ud y =\int_0^{\infty}R(x,0) b(x)\ud
x,
\end{equation}
with the usual notations $R=R_{\si}$ and  \eqref{Dirac:delta}.
\end{lemma}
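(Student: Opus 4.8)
The plan is to prove the two identities by exploiting the self-adjointness of the Airy operator $\Kaa_{\Ai,\si}$ (equivalently of $(\Een-\Kaa_{\Ai,\si})^{-1}$) on $L^2([0,\infty))$ together with the factorization $\vecA_{\si}^2 = \Kaa_{\Ai,\si}$ recorded in \eqref{operator:A2}, and the key observation that $Q$ is precisely $(\Een-\Kaa_{\Ai,\si})^{-1}$ applied to the function $x\mapsto\Ai(x+\si)$, i.e.\ $Q = (\Een-\Kaa_{\Ai,\si})^{-1}\vecA_{\si}\delta_0$ in a suitable distributional sense, or more carefully $Q(x)=\int_0^\infty (\Een-\Kaa_{\Ai,\si})^{-1}(x,y)\Ai(y+\si)\ud y$ by definition \eqref{Q:Airy:def}. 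For \eqref{operator:Airy:1}, I would start from the right-hand side and recognize the inner $y$-integral $\int_0^\infty \Ai(x+y+\si)b(y)\ud y = [\vecA_{\si}b](x)$ by \eqref{operator:A1}; thus the double integral is $\int_0^\infty (\Een-\Kaa_{\Ai,\si})^{-1}(x,0)\,[\vecA_{\si}b](x)\ud x$. Now use symmetry of the resolvent kernel, $(\Een-\Kaa_{\Ai,\si})^{-1}(x,0)=(\Een-\Kaa_{\Ai,\si})^{-1}(0,x)$, and move the operator onto $\vecA_{\si}b$: the integral becomes $[(\Een-\Kaa_{\Ai,\si})^{-1}\vecA_{\si}b](0)$. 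On the other hand $\vecA_{\si}$ is self-adjoint (its kernel $\Ai(x+y+\si)$ is symmetric), and $\vecA_{\si}$ commutes with $(\Een-\Kaa_{\Ai,\si})^{-1}$ since $(\Een-\Kaa_{\Ai,\si})^{-1}$ is a function of $\Kaa_{\Ai,\si}=\vecA_{\si}^2$. Hence $[(\Een-\Kaa_{\Ai,\si})^{-1}\vecA_{\si}b](0) = \int_0^\infty \big[(\Een-\Kaa_{\Ai,\si})^{-1}\vecA_{\si}\big](0,x)\,b(x)\ud x = \int_0^\infty \big[\vecA_{\si}(\Een-\Kaa_{\Ai,\si})^{-1}\big](0,x)\,b(x)\ud x$, and writing out the last operator product and using the symmetry of $\Ai(x+y+\si)$ identifies the $b$-coefficient with $\int_0^\infty \Ai(0+w+\si)(\Een-\Kaa_{\Ai,\si})^{-1}(w,x)\ud w = Q(x)$ (using the symmetry $(\Een-\Kaa_{\Ai,\si})^{-1}(w,x)=(\Een-\Kaa_{\Ai,\si})^{-1}(x,w)$ and definition \eqref{Q:Airy:def}). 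This gives \eqref{operator:Airy:1}.

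For \eqref{operator:Airy:2}, the left-hand side is $\int_0^\infty \big[\vecA_{\si}Q\big](y)\,b(y)\ud y$ after swapping the order of integration and recognizing $\int_0^\infty Q(x)\Ai(x+y+\si)\ud x = [\vecA_{\si}Q](y)$ by symmetry of the $\vecA_{\si}$-kernel. Now $Q = (\Een-\Kaa_{\Ai,\si})^{-1}\vecA_{\si}\ch_0$-type object; concretely, from \eqref{Q:Airy:def}, $Q(x)=[(\Een-\Kaa_{\Ai,\si})^{-1}g](x)$ where $g(y)=\Ai(y+\si)$. Then $\vecA_{\si}Q = \vecA_{\si}(\Een-\Kaa_{\Ai,\si})^{-1}g = (\Een-\Kaa_{\Ai,\si})^{-1}\vecA_{\si}g$ by commutativity, and $\vecA_{\si}g$ has value at $y$ equal to $\int_0^\infty \Ai(y+w+\si)\Ai(w+\si)\ud w = K_{\Ai,\si}(y,0)$, i.e.\ $\vecA_{\si}g = \Kaa_{\Ai,\si}\delta_0$. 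Therefore $\vecA_{\si}Q = (\Een-\Kaa_{\Ai,\si})^{-1}\Kaa_{\Ai,\si}\delta_0 = \Rop_{\Ai,\si}\delta_0$ by the resolvent identity \eqref{resolvent:def}, whose kernel value at $y$ is $R_{\si}(y,0)$. Plugging back, the left-hand side equals $\int_0^\infty R_{\si}(y,0)\,b(y)\ud y$, which is the right-hand side of \eqref{operator:Airy:2}.

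The main obstacle is making the formal manipulations with $\delta_0$ and with the "value at $0$" of operator kernels rigorous, since $\delta_0\notin L^2([0,\infty))$ and $b$ is only assumed to be "any function on $[0,\infty)$." The cleanest fix is to avoid $\delta_0$ altogether: work entirely with the symmetric bilinear form $\langle f, (\Een-\Kaa_{\Ai,\si})^{-1}\vecA_{\si}^{\,k} h\rangle$ for test functions $f,h\in L^2\cap C_c([0,\infty))$, exploiting that $\Ai(x+\si)$ decays superexponentially so that $(\Een-\Kaa_{\Ai,\si})^{-1}$ maps it to a function with the same decay, and that all kernels appearing ($\Ai(x+y+\si)$, $K_{\Ai,\si}$, $R_{\si}$) are continuous up to the boundary $x=0$ or $y=0$; the boundary evaluations in \eqref{Q:Airy:def} and \eqref{operator:Airy:1}--\eqref{operator:Airy:2} are then legitimate pointwise values of continuous functions. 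Once the smoothing/decay bookkeeping is in place, \eqref{operator:Airy:1} and \eqref{operator:Airy:2} follow from the three facts: symmetry of the Airy and resolvent kernels, commutativity of $\vecA_{\si}$ with $(\Een-\Kaa_{\Ai,\si})^{-1}$ (both functions of $\Kaa_{\Ai,\si}=\vecA_{\si}^2$), and the resolvent identity \eqref{resolvent:def}.
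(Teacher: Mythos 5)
Your argument is correct and is essentially the paper's own proof: the paper likewise writes $Q=(\Een-\Kaa_{\Ai,\si})^{-1}\vecA_{\si}\delta_0$, moves $\vecA_{\si}$ across using that $\vecA_{\si}$ commutes with $\Kaa_{\Ai,\si}=\vecA_{\si}^2$ (hence with the resolvent), and uses $\vecA_{\si}(\Een-\Kaa_{\Ai,\si})^{-1}\vecA_{\si}=(\Een-\Kaa_{\Ai,\si})^{-1}\Kaa_{\Ai,\si}=\Rop_{\Ai,\si}$ for the second identity. Your rephrasing that avoids the Dirac delta by evaluating continuous kernels at the boundary point $0$ is just a more careful rendering of the same computation.
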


\begin{proof}
Recall the operator $\vecA_{\si}$ defined in
\eqref{operator:A1}--\eqref{operator:A2} and let $\delta_0$ denote the Dirac
delta function at $0$. We have
\begin{eqnarray*} \int_{0}^{\infty}Q(x)b(x)\ud x
&=& \int_{0}^{\infty}\!\!\! \int_{0}^{\infty}
(\Een-\Kaa_{\Ai,\si})^{-1}(x,y)\,\left[\vecA_{\si}\delta_0\right](y)\, b(x)\ud
x\ud y \\
&=& \int_{0}^{\infty}\!\!\!\int_{0}^{\infty}
(\Een-\Kaa_{\Ai,\si})^{-1}(x,y)\,\delta_0(y)\, \left[\vecA_{\si} b\right](x)\ud
x\ud y,
\end{eqnarray*}
where we used that the operators $\vecA_{\si}$ and $\Kaa_{\Ai,\si}$ commute.
This proves \eqref{operator:Airy:1}. Next,
\begin{eqnarray*} && \int_0^{\infty}\!\!\!
\int_{0}^{\infty}Q(x)\Ai(x+y+\si)b(y)\ud x\ud y \\ &=& \int_0^{\infty}\!\!\!
\int_{0}^{\infty}
(\Een-\Kaa_{\Ai,\si})^{-1}(x,y)\,\left[\vecA_{\si}\delta_0\right](y)\,
\left[\vecA_{\si}b\right](x)\ud x\ud y \\
&=& \int_{0}^{\infty}\!\!\!\int_0^{\infty}R(x,y)\delta_0(y) b(x)\ud x\ud
y,\end{eqnarray*} where we used that
$$\vecA_{\si}(\Een-\Kaa_{\Ai,\si})^{-1}\vecA_{\si} =
(\Een-\Kaa_{\Ai,\si})^{-1}\Kaa_{\Ai,\si}=\Rop_{\Ai,\si}.$$  This proves
\eqref{operator:Airy:2}.
\end{proof}

\subsection{Proof of Theorem~\ref{theorem:tacnodeder:FV}}
\label{subsection:proof:tacnodeder:FV}

\begin{lemma}\label{lemma:equiv:ways}
The formulas \eqref{p1hat:op} can be written in the equivalent ways
\begin{eqnarray}
\label{p1hat:R} \what p_1(z;\si,\tau)
&=& \wtil \Aa_{\tau,z}(0)+ \int_0^{\infty}
R(x,0)\wtil\Aa_{\tau,z}(x)\ud x \\
\label{p1hat:Q} &=& \wtil
b_{\tau,z}(0)-\lam^{-1/6}\int_0^{\infty} Q(x)\Aa_{\tau,z}(x)\ud x \\
\what p_2(z;\si,\tau) &=&
\label{p2hat:R} \Aa_{\tau,z}(0)+\int_0^{\infty} R(x,0)\Aa_{\tau,z}(x)\ud x \\
\label{p2hat:Q}&=& b_{\tau,z}(0)-\lam^{1/6}\int_0^{\infty}
Q(x)\wtil\Aa_{\tau,z}(x)\ud x,
\end{eqnarray}
with the notations $R=R_{\si}$ and \eqref{Q:Airy:def}.
\end{lemma}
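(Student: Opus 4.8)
The plan is to treat the four identities in two groups. The ``resolvent form'' equalities \eqref{p1hat:R} and \eqref{p2hat:R} are immediate: in the defining integrals \eqref{p1hat:op} one substitutes the convention \eqref{Dirac:delta}, $(\Een-\Kaa_{\Ai,\si})^{-1}(x,0)=\delta(x)+R_\si(x,0)$, and the $\delta$-term contributes the point evaluation $\wtil\Aa_{\tau,z}(0)$ (resp. $\Aa_{\tau,z}(0)$) while the $R_\si$-term gives the stated integral. No analysis beyond the paper's standing conventions is needed here.

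For the ``$Q$-form'' equalities \eqref{p1hat:Q} and \eqref{p2hat:Q} I would work at the operator level on $L^2([0,\infty))$. Write $G:=(\Een-\Kaa_{\Ai,\si})^{-1}=\Een+\Rop_{\Ai,\si}$ and let $\vecA_\si$ be the operator \eqref{operator:A1}; recall from \eqref{operator:A2} that $\vecA_\si^2=\Kaa_{\Ai,\si}$, so that $\vecA_\si$ commutes with $\Kaa_{\Ai,\si}$ and hence with $G$, and note that $G\Kaa_{\Ai,\si}=G-\Een$. Both $G$ and $\vecA_\si$ have symmetric kernels, and by \eqref{Q:Airy:def} we have $Q=G\vecA_\si\delta_0$, with $\delta_0$ the Dirac mass at $0$. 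From \eqref{Acal:FV} there are the operator identities $\wtil\Aa_{\tau,z}=\wtil b_{\tau,z}-\lam^{-1/6}\vecA_\si b_{\tau,z}$ and $\Aa_{\tau,z}=b_{\tau,z}-\lam^{1/6}\vecA_\si\wtil b_{\tau,z}$. Then $\what p_1(z;\si,\tau)=\bigl[G\wtil\Aa_{\tau,z}\bigr](0)=\bigl[G\wtil b_{\tau,z}\bigr](0)-\lam^{-1/6}\bigl[G\vecA_\si b_{\tau,z}\bigr](0)$, while, using self-adjointness of $G$ and $\vecA_\si$ together with their commutativity,
\[
\int_0^\infty Q(x)\Aa_{\tau,z}(x)\ud x=\bigl[G\vecA_\si\Aa_{\tau,z}\bigr](0)
=\bigl[G\vecA_\si b_{\tau,z}\bigr](0)-\lam^{1/6}\bigl[G\Kaa_{\Ai,\si}\wtil b_{\tau,z}\bigr](0).
\]
Replacing $G\Kaa_{\Ai,\si}$ by $G-\Een$, multiplying by $-\lam^{-1/6}$, and adding $\wtil b_{\tau,z}(0)$ produces exactly $\bigl[G\wtil b_{\tau,z}\bigr](0)-\lam^{-1/6}\bigl[G\vecA_\si b_{\tau,z}\bigr](0)=\what p_1(z;\si,\tau)$, which is \eqref{p1hat:Q}. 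Identity \eqref{p2hat:Q} follows from the identical computation after interchanging the roles of $b_{\tau,z}$ and $\wtil b_{\tau,z}$, hence of $\lam^{1/6}$ and $\lam^{-1/6}$, and of $\Aa_{\tau,z}$ and $\wtil\Aa_{\tau,z}$. As a variant that avoids operator bookkeeping, one can instead expand $\Aa_{\tau,z},\wtil\Aa_{\tau,z}$ inside \eqref{p1hat:R}--\eqref{p2hat:R} and apply the two moment identities \eqref{operator:Airy:1}--\eqref{operator:Airy:2} of Lemma~\ref{lemma:RQequiv} directly; this is perhaps the most transparent route since those identities were just established.

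The main thing to watch is the bookkeeping: keeping track of the powers $\lam^{\pm1/6}$ and of the several ``boundary'' contributions that arise from the $\delta_0$ part of $G$ (the $(x,0)$-evaluations). All interchanges of integration order, as well as the manipulations with $\delta_0$, are legitimate because each $b_{\tau,z}(x)$ and $\wtil b_{\tau,z}(x)$ decays super-exponentially in $x$ --- being an Airy function of a linearly growing argument --- so every integral in sight converges absolutely; this makes the analytic part of the argument routine and leaves the combinatorial matching of terms as the only, mild, obstacle.
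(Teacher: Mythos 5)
Your proof is correct and takes essentially the same approach as the paper: the paper's proof is the one-line observation that the identities are immediate from Lemma~\ref{lemma:RQequiv} and the definitions, and your operator-level manipulation (using $\vecA_{\si}^2=\Kaa_{\Ai,\si}$, commutativity, and $(\Een-\Kaa_{\Ai,\si})^{-1}\Kaa_{\Ai,\si}=(\Een-\Kaa_{\Ai,\si})^{-1}-\Een$) is precisely the content of that lemma written out explicitly. The ``variant'' you mention at the end --- expanding $\Aa_{\tau,z},\wtil\Aa_{\tau,z}$ in \eqref{p1hat:R}--\eqref{p2hat:R} and invoking \eqref{operator:Airy:1}--\eqref{operator:Airy:2} --- is literally the paper's argument, so no further comparison is needed.
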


\begin{proof}
Immediate from Lemma~\ref{lemma:RQequiv} and the definitions.
\end{proof}

\begin{lemma}
We have
\begin{eqnarray}\label{intparts:FV:0} (1+\lam^{-1/2})\frac{\pp}{\pp
\si}b_{\tau,z}(x) &=& \lam^{-1/2}\frac{\pp}{\pp x} b_{\tau,z}(x), \\
\label{intparts:FV:00} (1+\lam^{-1/2})\frac{\pp}{\pp \si}\wtil b_{\tau,z}(x)
&=& \frac{\pp}{\pp x} \wtil b_{\tau,z}(x),
\\ \label{intparts:FV} (1+\lam^{-1/2})
\frac{\pp}{\pp\si}\Aa_{\tau,z}(x) &=& \lam^{-1/2}\frac{\pp}{\pp
x}\Aa_{\tau,z}(x) +\lam^{1/6} \Ai(x+\si)\wtil b_{\tau,z}(0).
\end{eqnarray}
\end{lemma}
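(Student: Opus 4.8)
The plan is to verify the three identities by direct differentiation, exploiting the fact that each of the functions $b_{\tau,z}$, $\wtil b_{\tau,z}$ is, up to an exponential prefactor, an Airy function evaluated at an argument that is affine in both $x$ and $\si$. Concretely, recall from \eqref{bzx:FV} (or \eqref{b:FV}) that $b_{\tau,z}(x)$ has the shape $e^{L(x)}\Ai(A(x+\Sigma))$ with $A>0$ a constant, $L$ affine in $x$, and crucially the combination $x+\Sigma$ appearing in the Airy argument together with the constant $C=(1+\lam^{-1/2})^{1/3}$ in front of $x$. The key arithmetic observation is that $\si$ enters only through $\Sigma+\tau^2$ (see \eqref{b:FV}), so $\pp_\si$ acting on the Airy argument produces the same thing as $\pp_x$ acting on it, up to the factor $C$ coming from the $Cx$ term; this is precisely what produces the constants $(1+\lam^{-1/2})$ and $\lam^{-1/2}$ (resp.\ $1$) in \eqref{intparts:FV:0}--\eqref{intparts:FV:00}. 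One must also check that the exponential prefactor $L$ depends on $x$ and $\Sigma$ in a compatible way; a short computation shows the prefactor contributes nothing extra because $\pp_\si L = 0$ in the relevant combination (or cancels against the matching $\pp_x$ term). So \eqref{intparts:FV:0} and \eqref{intparts:FV:00} are each a one-line chain-rule check.

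For \eqref{intparts:FV}, I would start from the definition
\[
\Aa_{\tau,z}(x) = b_{\tau,z}(x) - \lam^{1/6}\int_0^\infty \Ai(x+y+\si)\,\wtil b_{\tau,z}(y)\ud y
\]
in \eqref{Acal:FV} and differentiate under the integral sign with respect to $\si$. The first term contributes $\pp_\si b_{\tau,z}(x) = \lam^{-1/2}(1+\lam^{-1/2})^{-1}\pp_x b_{\tau,z}(x)$ by \eqref{intparts:FV:0}. In the integral term, $\si$ appears both inside the Airy kernel $\Ai(x+y+\si)$ and inside $\wtil b_{\tau,z}(y)$. For the kernel part I would use $\pp_\si \Ai(x+y+\si) = \pp_x \Ai(x+y+\si)$, which after integration gives $\pp_x$ of the whole integral term. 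For the $\wtil b_{\tau,z}(y)$ part I would substitute \eqref{intparts:FV:00} to replace $\pp_\si \wtil b_{\tau,z}(y)$ by $(1+\lam^{-1/2})^{-1}\pp_y \wtil b_{\tau,z}(y)$ and then integrate by parts in $y$; the boundary term at $y=0$ produces exactly $-\Ai(x+\si)\wtil b_{\tau,z}(0)$ (with the appropriate sign and constant after multiplying through by $\lam^{1/6}$ and $(1+\lam^{-1/2})$), while the boundary term at $y=\infty$ vanishes because of the Airy decay in $\wtil b_{\tau,z}$, and the remaining bulk term recombines with the kernel-derivative term to give $\lam^{-1/2}\pp_x$ of the integral. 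Collecting the $b_{\tau,z}$ and integral contributions reproduces the right-hand side of \eqref{intparts:FV}.

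The main obstacle, and the only place requiring care, is bookkeeping the exponential prefactors and the precise constants: one has to keep track of the $\tau$-dependent exponentials $e^{-\tau y + \tau^3/3}$ and $e^{-\sqrt\lam\tau\wtil y + \lam\tau^3/3}$ in \eqref{b:FV}, make sure the $\tau^2$ shift inside the Airy argument is handled consistently when passing $\pp_\si$ through, and confirm that no stray first-derivative-of-prefactor terms survive. A clean way to organize this is to write $b_{\tau,z}(x) = g(x)\,h(x+\Sigma)$ with $g$ purely exponential and $h$ the exponentially-rescaled Airy function, note that $g$ does not depend on $\si$ while $h$ depends on $x$ and $\si$ only through $x+\Sigma$ (up to the constant $C$ and the fixed $\tau^2$), and then the chain rule mechanically yields \eqref{intparts:FV:0}. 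The integration-by-parts boundary term in \eqref{intparts:FV} is where one should be most careful with signs; everything else is routine.
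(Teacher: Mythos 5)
Your proposal is correct and follows essentially the same route as the paper: the first two identities are chain-rule checks using the fact that $b_{\tau,z}$ and $\wtil b_{\tau,z}$ depend on $(x,\si)$ only through a single affine combination, and the third follows by differentiating the integral in \eqref{Acal:FV}, inserting \eqref{intparts:FV:0}--\eqref{intparts:FV:00}, and integrating by parts so that only the boundary term $\lam^{1/6}\Ai(x+\si)\wtil b_{\tau,z}(0)$ survives. The paper merely packages the last step by applying the combined operator $(1+\lam^{-1/2})\pp_{\si}-\lam^{-1/2}\pp_{x}$ (which annihilates $b_{\tau,z}$) to $\Aa_{\tau,z}$, which is the same computation in different bookkeeping.
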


\begin{proof}
The first two formulas are obvious from the definitions
\eqref{temperature:sigma}--\eqref{C:FV0}. For the last formula, we calculate
\begin{eqnarray*}
& & \left[(1+\lam^{-1/2})\frac{\pp}{\partial \si}-\lam^{-1/2}\frac{\pp}{\pp
x}\right]\Aa_{\tau,z}(x) \\ &=& \left[(1+\lam^{-1/2})\frac{\partial}{\partial
\si}-\lam^{-1/2}\frac{\pp}{\pp x}\right] \left(b_{\tau,z}(x)
-\lam^{1/6}\int_{0}^{\infty} \Ai(x+y+\si)\wtil b_{\tau,z}(y) \ud y\right) \\
&=& -\lam^{1/6}\left[(1+\lam^{-1/2})\frac{\pp}{\pp
\si}-\lam^{-1/2}\frac{\pp}{\pp x}\right] \left(
\int_{0}^{\infty} \Ai(x+y+\si)\wtil b_{\tau,z}(y) \ud y\right) \\
&=& -\lam^{1/6}\int_{0}^{\infty} \left(\Ai'(x+y+\si)\wtil b_{\tau,z}(y) + \Ai(x+y+\si)\wtil b_{\tau,z}'(y) \right) \ud y \\
&=& \lam^{1/6}\Ai(x+\si)\wtil b_{\tau,z}(0),
\end{eqnarray*} where the second and third
equalities use \eqref{intparts:FV:0} and \eqref{intparts:FV:00} respectively,
and the last equality follows from integration by parts. This proves the lemma.
\end{proof}

Now we prove Theorem~\ref{theorem:tacnodeder:FV}. With the help of
\eqref{intparts:FV}, the derivative of \eqref{tacnode:FV} with respect to $\si$
becomes
\begin{eqnarray}
\nonumber && (1+\lam^{-1/2})^{2/3} \frac{\pp}{\pp \si} \mathcal
L\tac(u,v;\si,\tau)
\\ \nonumber &=& (1+\lam^{-1/2})^{2/3}\lam^{1/3} C \frac{\pp}{\pp
\si}\int_0^{\infty}\wtil b_{\tau,u}(x)\wtil b_{-\tau,v}(x)\ud x\\
\nonumber && +
\lam^{-1/2}\int_0^{\infty}\!\!\!\int_0^{\infty}(\Een-\Kaa_{\Ai,\si})^{-1}(x,y)\
\left(\Aa_{\tau,u}'(x)\Aa_{-\tau,v}(y)+\Aa_{\tau,u}(x)\Aa_{-\tau,v}'(y)\right)\ud
x\ud y\\ \nonumber && + \lam^{1/6}\wtil
b_{\tau,u}(0)\int_0^{\infty} Q(y)\Aa_{-\tau,v}(y)\ud y\\
\nonumber && + \lam^{1/6}\wtil b_{-\tau,v}(0)\int_0^{\infty} Q(x)
\Aa_{\tau,u}(x)\ud x\\
\label{intparts:FV:long}&&
+(1+\lam^{-1/2})\int_0^{\infty}\!\!\!\int_0^{\infty}\!\!
\left(\frac{\pp}{\pp\si} R(x,y)\right) \Aa_{\tau,u}(x)\Aa_{-\tau,v}(y)\ud x\ud
y,
\end{eqnarray}
where we used the definition of $Q$ in \eqref{Q:Airy:def}. The first term in
the right hand side of \eqref{intparts:FV:long} can be written as
\begin{equation}\label{intparts:FV:2}
(1+\lam^{-1/2})^{2/3}\lam^{1/3} C \frac{\pp}{\pp
\si}\int_0^{\infty}\wtil b_{\tau,u}(x)\wtil b_{-\tau,v}(x)\ud x =  -\lam^{1/3}\wtil b_{\tau,u}(0)\wtil b_{-\tau,v}(0),
\end{equation}
on account of \eqref{temperature:sigma} and \eqref{b:FV}. The second term in
the right hand side of \eqref{intparts:FV:long} can be written as
\begin{multline}\label{intparts:FV:3}
\lam^{-1/2}\int_0^{\infty}\!\!\!\int_0^{\infty}(\Een-\Kaa_{\Ai,\si})^{-1}(x,y)\
\left(\Aa_{\tau,u}'(x)\Aa_{-\tau,v}(y)+\Aa_{\tau,u}(x)\Aa_{-\tau,v}'(y)\right)\ud
x\ud y \\ =
-\lam^{-1/2}\left(\int_0^{\infty}\!\!\!\int_0^{\infty}\!\!\left[\left(\frac{\pp}{\pp
x}+\frac{\pp}{\pp y}\right)R(x,y)\right] \Aa_{\tau,u}(x)\Aa_{-\tau,v}(y)\ud
x\ud y
 +\Aa_{\tau,u}(0)\Aa_{-\tau,v}(0)\right.\\
\left. +\Aa_{\tau,u}(0)\int_0^{\infty}R(0,y)\Aa_{-\tau,v}(y)\ud y
+\Aa_{-\tau,v}(0)\int_0^{\infty}R(x,0)\Aa_{\tau,u}(x)\ud x\right),
\end{multline}
where we used \eqref{Dirac:delta} and integration by parts. Finally, we observe
that
\begin{multline}\label{intparts:FV:4}
\left[ (1+\lam^{-1/2})\frac{\pp}{\pp\si}-\lam^{-1/2}\left(\frac{\pp}{\pp
x}+\frac{\pp}{\pp y}\right)\right]R(x,y) = -\lam^{-1/2} R(x,0)R(0,y)-Q(x)Q(y),
\end{multline}
on account of \eqref{diff:Rxy}--\eqref{diff:Rsigma}. Inserting
\eqref{intparts:FV:2}--\eqref{intparts:FV:4} in the right hand side of
\eqref{intparts:FV:long}, we obtain
\begin{eqnarray}
\nonumber && (1+\lam^{-1/2})^{2/3} \frac{\pp}{\pp \si} \mathcal
L\tac(u,v;\si,\tau) \\ \nonumber &=& -\lam^{1/3}\left(\wtil
b_{\tau,u}(0)-\lam^{-1/6}\int_0^{\infty}Q(x)\Aa_{\tau,u}(x)\ud
x\right)\left(\wtil
b_{-\tau,v}(0)-\lam^{-1/6}\int_0^{\infty}Q(x)\Aa_{-\tau,v}(x)\ud x\right)
\\ \nonumber && -\lam^{-1/2}\left(\Aa_{\tau,u}(0)+\int_0^{\infty}R(x,0)\Aa_{\tau,u}(x)\ud
x\right) \label{intparts:FV:Qs}
\left(\Aa_{-\tau,v}(0)+\int_0^{\infty}R(0,x)\Aa_{-\tau,v}(x)\ud x\right)\\
&=& -\lam^{1/3}\what p_1(u;\si,\tau)\what p_1(v;\si,-\tau)-\lam^{-1/2}\what
p_2(u;\si,\tau)\what p_2(v;\si,-\tau),
\end{eqnarray}
on account of \eqref{p1hat:Q}--\eqref{p2hat:R}. This proves
\eqref{rank2:tacnode:FV}.

Finally we prove \eqref{integrated:tacnode:FV}. From \cite[Sec.~3.8]{AGZ} we
have the estimate
\begin{equation}\label{R:estimate}
|R_{\si}(x,y)|< C_0 e^{-x-y-2\si}
\end{equation}
for all $x,y,\si>0$, with $C_0>0$ a certain constant. We also have the
asymptotics for the Airy function
\begin{equation}\label{Airy:asy}
\Ai(x)\sim \exp\left(-\frac 23 x^{3/2}\right)/(2\sqrt{\pi}x^{1/4}),\qquad x\to
+\infty.
\end{equation}
Consequently the kernel $\mathcal L\tac(u,v;\si,\tau)$ in \eqref{tacnode:FV},
\eqref{Dirac:delta} goes to zero (at a very fast rate) for $\si\to +\infty$.
Integration of \eqref{rank2:tacnode:FV} then yields
\eqref{integrated:tacnode:FV}. This proves Theorem~\ref{theorem:tacnodeder:FV}.
$\bol$

\subsection{Proof of Theorem~\ref{theorem:identify:kernels}}
\label{subsection:proof:identify}

Let the parameters $r_1,r_2,s_1,s_2$ be given by \eqref{identify:kernels}. As
already observed, the expressions for $C,\si$ in \eqref{C:RH} and
\eqref{C:FV0}, \eqref{temperature:sigma} are equal under this identification.
Similarly, the expressions for $p_1,p_2$ in \eqref{p1:Airy:op} and $\what
p_1,\what p_2$ in \eqref{p1hat:op} are related by
\begin{equation}\label{pphat}
p_j(z) = \sqrt{2\pi} r_j^{1/6} \exp\left(r_j^4 \tau(\Sigma+\frac 23
\tau^2)\right)\what p_j(z),\qquad j=1,2.
\end{equation}
Note that the exponential factor in \eqref{pphat} cancels out in
\eqref{rank2:tacnode:RH}.

Now observe that the formulas for $s_1,s_2$ in \eqref{identify:kernels} are of
the form $s_j=\si_j s$ with
\begin{equation}\label{identify:s12}s:=(\Sigma+\tau^2)/2,\qquad
\si_1:=\lam^{3/4},\qquad \si_2:=1.\end{equation} By \eqref{temperature:sigma}
we then have \begin{equation}  \frac{\pp}{\pp s}\mathcal L\tac(u,v;\si,\tau) =
2\sqrt{\lam}(1+\lam^{-1/2})^{2/3}\frac{\pp}{\pp \si}\mathcal
L\tac(u,v;\si,\tau),
\end{equation} where we consider $\tau$ to be fixed. Theorem~\ref{theorem:identify:kernels} follows from
Theorems~\ref{theorem:tacnodeder} and \ref{theorem:tacnodeder:FV} and the above
observations. $\bol$

\section{Proof of Theorem~\ref{theorem:Airyformulas:FV}}
\label{section:prooftheoremAiryRH}

\subsection{Preparations}
\label{subsection:prep}

\begin{lemma}
The functions $b_z(x)$, $\wtil b_z(x)$ in \eqref{bzx:FV} satisfy the
differential equations
\begin{equation}\label{bzx:FV:diffeq:0}
b_z'(x) = C\frac{\pp}{\pp z} b_z(x),\qquad \wtil b_z'(x) = -C\frac{\pp}{\pp z}
\wtil b_z(x),
\end{equation}
and
\begin{equation}\label{bzx:FV:diffeq:1} r_2^{-2}\frac{\pp^2}{\pp z^2} b_{z}(x)+2\tau \frac{\pp}{\pp z} b_z(x) =
\left(z+Cx+2\frac{s_2}{r_2}-r_2^{2}\tau^2 \right)b_z(x),
\end{equation}
\begin{equation}\label{bzx:FV:diffeq:2} r_1^{-2}\frac{\pp^2}{\pp z^2} \wtil b_{z}(x)-2\tau \frac{\pp}{\pp z} \wtil b_z(x) =
\left(-z+Cx+2\frac{s_1}{r_1}-r_1^{2}\tau^2 \right)\wtil b_z(x).
\end{equation}
\end{lemma}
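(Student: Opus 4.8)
The plan is to exploit that each of the two functions depends on $z$ and $x$ through a single linear combination, and then to reduce the two second-order equations to the Airy equation $\Ai''(\zeta)=\zeta\,\Ai(\zeta)$.

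First I would note that in \eqref{bzx:FV} the variables $z,x$ enter $b_z(x)$ only through $w:=z+Cx$, both in the exponential prefactor $e^{-r_2^2\tau w}$ and in the argument $r_2^{2/3}(w+2s_2/r_2)$ of the Airy function. Writing $b_z(x)=\sqrt{2\pi}\,r_2^{1/6}\,g(z+Cx)$ with $g(w):=e^{-r_2^2\tau w}\Ai\bigl(r_2^{2/3}(w+2s_2/r_2)\bigr)$, the chain rule gives at once $b_z'(x)=C\,\pp_z b_z(x)$. Likewise $\wtil b_z(x)$ depends on $z,x$ only through $\wtil w:=-z+Cx$ (using $r_1^2\tau(z-Cx)=-r_1^2\tau\wtil w$), so $\wtil b_z(x)=\sqrt{2\pi}\,r_1^{1/6}\,\wtil g(-z+Cx)$ with $\wtil g$ of the same shape as $g$ but with $r_2,s_2$ replaced by $r_1,s_1$; since $\pp_z\wtil w=-1$ this yields $\wtil b_z'(x)=-C\,\pp_z\wtil b_z(x)$. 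This proves \eqref{bzx:FV:diffeq:0}.

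For \eqref{bzx:FV:diffeq:1} I would compute $g'$ and $g''$ directly. With $\zeta:=r_2^{2/3}(w+2s_2/r_2)$ and $\Ai''(\zeta)=\zeta\,\Ai(\zeta)$ one obtains
\[
r_2^{-2}g''(w)=e^{-r_2^2\tau w}\Bigl[r_2^2\tau^2\,\Ai(\zeta)-2r_2^{2/3}\tau\,\Ai'(\zeta)+\bigl(w+2s_2/r_2\bigr)\Ai(\zeta)\Bigr],
\]
while $2\tau g'(w)=e^{-r_2^2\tau w}\bigl[-2r_2^2\tau^2\,\Ai(\zeta)+2r_2^{2/3}\tau\,\Ai'(\zeta)\bigr]$. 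The two terms proportional to $\Ai'(\zeta)$ cancel and the $\tau^2$-terms combine to $-r_2^2\tau^2\Ai(\zeta)$, leaving $r_2^{-2}g''+2\tau g'=(w+2s_2/r_2-r_2^2\tau^2)\,g$. Multiplying by $\sqrt{2\pi}\,r_2^{1/6}$ and restoring $w=z+Cx$ gives \eqref{bzx:FV:diffeq:1}. Equation \eqref{bzx:FV:diffeq:2} then follows from the identical computation applied to $\wtil g$: since $\pp_z^2\wtil b_z=\sqrt{2\pi}\,r_1^{1/6}\wtil g''(\wtil w)$ and $\pp_z\wtil b_z=-\sqrt{2\pi}\,r_1^{1/6}\wtil g'(\wtil w)$, the combination $r_1^{-2}\pp_z^2\wtil b_z-2\tau\pp_z\wtil b_z$ equals $\sqrt{2\pi}\,r_1^{1/6}\bigl(r_1^{-2}\wtil g''+2\tau\wtil g'\bigr)$, hence $(-z+Cx+2s_1/r_1-r_1^2\tau^2)\,\wtil b_z(x)$.

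I do not expect any genuine obstacle here; the only care needed is the bookkeeping of the fractional powers of $r_1,r_2$ and the exact cancellation of the $\Ai'$ terms, which is precisely what pins the coefficient $2\tau$ of $\pp_z$ to the linear-in-$\tau$ part of the exponential prefactor — and this is exactly the form in which \eqref{bzx:FV:diffeq:1}--\eqref{bzx:FV:diffeq:2} will be needed to feed into the differential system \eqref{diffeq:pee:1}--\eqref{diffeq:pee:2}.
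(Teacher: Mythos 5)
Your proof is correct and matches the paper's (very terse) argument: the paper simply declares \eqref{bzx:FV:diffeq:0} obvious and derives \eqref{bzx:FV:diffeq:1}--\eqref{bzx:FV:diffeq:2} by "a straightforward calculation using the Airy differential equation $\Ai''(x)=x\Ai(x)$", which is exactly the computation you carried out, with the single-variable reduction $w=z+Cx$, $\wtil w=-z+Cx$ making the bookkeeping transparent.
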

\begin{proof}
Equation \eqref{bzx:FV:diffeq:0} is obvious. The other equations follow from a
straightforward calculation using the Airy differential equation
$\Ai''(x)=x\Ai(x)$.
\end{proof}

\begin{lemma} The function $\Aa_z(x)$ in \eqref{curlyA:FV} satisfies the
differential equations
\begin{equation}\label{curlyA:FV:diff:1} \frac{\pp}{\pp z} \Aa_{z}(x) =
C^{-1}\left(\Aa_{z}'(x) - \ct\Ai(x+\si)\wtil b_{z}(0)\right)
\end{equation}
and
\begin{multline}\label{curlyA:FV:diff:2} r_2^{-2}\frac{\pp^2}{\pp z^2} \Aa_{z}(x)+2\tau \frac{\pp}{\pp z} \Aa_z(x) =
\left(z+Cx+2\frac{s_2}{r_2}-r_2^{2}\tau^2 \right)\Aa_{z}(x)  \\
+ C \ct\left(\Ai(x+\si)\wtil b_{z}'(0)  - \Ai'(x+\si)\wtil b_{z}(0)\right).
\end{multline}
\end{lemma}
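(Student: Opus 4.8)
The plan is to derive both displayed identities directly from the definition \eqref{curlyA:FV} of $\Aa_z(x)$ and the previously established differential equations \eqref{bzx:FV:diffeq:0}--\eqref{bzx:FV:diffeq:2} for $b_z$ and $\wtil b_z$, together with an integration by parts in the convolution against the Airy kernel. Recall that
\[
\Aa_z(x) = b_z(x) - \ct\int_0^\infty \Ai(x+y+\si)\,\wtil b_z(y)\,\ud y,
\]
so all $z$-derivatives and $x$-derivatives land on the two explicit ingredients $b_z$ and $\wtil b_z$, and the only coupling between the $x$-variable and the $y$-variable inside the integral is through the argument $x+y+\si$ of the Airy function. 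This means $\pp_x \Ai(x+y+\si) = \pp_y \Ai(x+y+\si)$, which is exactly what lets one trade an $x$-derivative for a $y$-derivative at the cost of a boundary term at $y=0$.

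For \eqref{curlyA:FV:diff:1}: I would compute $\pp_z \Aa_z(x)$ using $\pp_z b_z = C^{-1}b_z'$ and $\pp_z \wtil b_z = -C^{-1}\wtil b_z'$ (from \eqref{bzx:FV:diffeq:0}), giving
\[
\pp_z \Aa_z(x) = C^{-1}b_z'(x) + C^{-1}\ct\int_0^\infty \Ai(x+y+\si)\,\wtil b_z'(y)\,\ud y.
\]
In the integral, replace $\wtil b_z'(y)\,\ud y$ using integration by parts: $\int_0^\infty \Ai(x+y+\si)\wtil b_z'(y)\ud y = -\Ai(x+\si)\wtil b_z(0) - \int_0^\infty \Ai'(x+y+\si)\wtil b_z(y)\ud y = -\Ai(x+\si)\wtil b_z(0) - \pp_x\int_0^\infty \Ai(x+y+\si)\wtil b_z(y)\ud y$, where the boundary term at $y=\infty$ vanishes by the Airy decay \eqref{Airy:asy}. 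Recognizing $\Aa_z'(x) = b_z'(x) - \ct\,\pp_x\int_0^\infty \Ai(x+y+\si)\wtil b_z(y)\ud y$ then collapses everything to $\pp_z\Aa_z(x) = C^{-1}(\Aa_z'(x) - \ct\,\Ai(x+\si)\wtil b_z(0))$, which is \eqref{curlyA:FV:diff:1}.

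For \eqref{curlyA:FV:diff:2}: I would apply the operator $r_2^{-2}\pp_z^2 + 2\tau\pp_z$ termwise. On $b_z(x)$ it produces $(z+Cx+2s_2/r_2 - r_2^2\tau^2)b_z(x)$ by \eqref{bzx:FV:diffeq:1}. Inside the integral it acts only on $\wtil b_z(y)$; but $\wtil b_z$ satisfies \eqref{bzx:FV:diffeq:2}, i.e.\ $r_1^{-2}\pp_z^2\wtil b_z - 2\tau\pp_z\wtil b_z = (-z+Cy+2s_1/r_1-r_1^2\tau^2)\wtil b_z$, which is the \emph{wrong} operator — it has $r_1$ and a sign flip on $\tau$. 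The main obstacle is therefore this mismatch: one must first convert $(r_2^{-2}\pp_z^2 + 2\tau\pp_z)$ acting on $\wtil b_z(y)$ into an expression involving $\wtil b_z$, $\wtil b_z'$ and $\wtil b_z''$ in the $y$-variable. The route is to use \eqref{bzx:FV:diffeq:0}, $\pp_z\wtil b_z = -C^{-1}\wtil b_z'$ and hence $\pp_z^2 \wtil b_z = C^{-2}\wtil b_z''$, turning the $z$-operator into $C^{-2}(r_2^{-2}\wtil b_z'' ) - 2\tau C^{-1}\wtil b_z'$ in the $y$-variable; then integrate by parts twice against $\Ai(x+y+\si)$, each pass converting a $y$-derivative into an $x$-derivative plus a boundary term at $y=0$ involving $\wtil b_z(0)$ or $\wtil b_z'(0)$. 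After reassembling the $x$-derivatives one recovers $(z+Cx+2s_2/r_2-r_2^2\tau^2)$ times the integral term plus precisely the boundary contribution $C\ct(\Ai(x+\si)\wtil b_z'(0) - \Ai'(x+\si)\wtil b_z(0))$ claimed in \eqref{curlyA:FV:diff:2}; here one also needs the elementary fact that $Cy$ inside the integrand equals $C(x+y+\si) - Cx - C\si$ so that the factor $Cy$ coming from \eqref{bzx:FV:diffeq:2} correctly recombines with $Cx$ after the integration-by-parts bookkeeping, using $C\si = 2s_1/r_1 + 2s_2/r_2 - (r_1^2+r_2^2)\tau^2$ from \eqref{C:RH}. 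Keeping track of these constant shifts and the two boundary terms is the only delicate part; everything else is the Airy equation $\Ai''(t)=t\Ai(t)$ and routine bookkeeping.
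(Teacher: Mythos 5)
Your derivation of \eqref{curlyA:FV:diff:1} is correct and coincides with the paper's argument (use \eqref{bzx:FV:diffeq:0}, one integration by parts, keep the boundary term at $y=0$). For \eqref{curlyA:FV:diff:2}, however, there is a genuine gap. You correctly identify the obstacle (the operator $r_2^{-2}\pp^2/\pp z^2+2\tau\,\pp/\pp z$ is not the one appearing in \eqref{bzx:FV:diffeq:2}), but the resolution you propose --- convert \emph{all} $z$-derivatives of $\wtil b_z$ into $y$-derivatives via \eqref{bzx:FV:diffeq:0} and then integrate by parts twice against $\Ai(x+y+\si)$ --- does not reach the claimed right-hand side. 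Writing $I(x)=\int_0^\infty\Ai(x+y+\si)\wtil b_z(y)\ud y$, your procedure yields
\begin{multline*}
\Bigl(r_2^{-2}\tfrac{\pp^2}{\pp z^2}+2\tau\tfrac{\pp}{\pp z}\Bigr)I(x)
=\frac{1}{C^2r_2^{2}}\int_0^\infty (x+y+\si)\Ai(x+y+\si)\wtil b_z(y)\ud y
+\frac{2\tau}{C}\int_0^\infty \Ai'(x+y+\si)\wtil b_z(y)\ud y\\
+\frac{1}{C^2r_2^{2}}\bigl(\Ai'(x+\si)\wtil b_z(0)-\Ai(x+\si)\wtil b_z'(0)\bigr)
+\frac{2\tau}{C}\,\Ai(x+\si)\wtil b_z(0),
\end{multline*}
in which the explicit $z$ required in \eqref{curlyA:FV:diff:2} never appears, the $\Ai'$-integral cannot be reduced, the boundary terms carry the coefficients $C^{-2}r_2^{-2}$ and $2\tau C^{-1}$ instead of $C$, and there is a spurious $\tau$-dependent boundary term. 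Pure integration by parts only moves derivatives onto the Airy factor; the factor $z$ can only be produced by actually applying \eqref{bzx:FV:diffeq:2} to $\wtil b_z$, which your outline never does (you mention \lq\lq the factor $Cy$ coming from \eqref{bzx:FV:diffeq:2}\rq\rq, but no step applies that equation), so your final claim that one \lq\lq recovers $(z+Cx+2s_2/r_2-r_2^2\tau^2)$ times the integral plus the stated boundary contribution\rq\rq\ is asserted rather than derived, and as described it fails.

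The missing idea, which is the heart of the paper's proof, is the splitting based on $r_1^{-2}+r_2^{-2}=C^3$ (this is exactly why $C$ is defined as in \eqref{C:RH}). Under the integral one writes
\begin{equation*}
-\Bigl(r_2^{-2}\tfrac{\pp^2}{\pp z^2}+2\tau\tfrac{\pp}{\pp z}\Bigr)\wtil b_z(y)
=\Bigl(r_1^{-2}\tfrac{\pp^2}{\pp z^2}-2\tau\tfrac{\pp}{\pp z}\Bigr)\wtil b_z(y)-C^{3}\tfrac{\pp^2}{\pp z^2}\wtil b_z(y)
=\Bigl(r_1^{-2}\tfrac{\pp^2}{\pp z^2}-2\tau\tfrac{\pp}{\pp z}\Bigr)\wtil b_z(y)-C\,\wtil b_z''(y),
\end{equation*}
using $\pp_z^2\wtil b_z=C^{-2}\wtil b_z''$. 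The first bracket is exactly the left-hand side of \eqref{bzx:FV:diffeq:2} and produces the factor $(-z+Cy+2s_1/r_1-r_1^{2}\tau^{2})$; this is where $z$ enters and where the $\tau$-sign mismatch is absorbed. Only the leftover $-C\,\wtil b_z''(y)$ is integrated by parts twice, giving the boundary terms with the correct prefactor $C$ (hence $C\ct$ after restoring the prefactor $-\ct$) and, via $\Ai''(t)=t\Ai(t)$, the factor $-C(x+y+\si)$; the two factors combine through \eqref{sigma:FV:simplify} --- the identity you quoted in the equivalent form $C\si=2s_1/r_1+2s_2/r_2-(r_1^{2}+r_2^{2})\tau^{2}$ --- into $-(z+Cx+2s_2/r_2-r_2^{2}\tau^{2})$, which is \eqref{curlyA:FV:diff:2}. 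If you prefer to work entirely in the $y$-variable, as in your outline, you may first rewrite \eqref{bzx:FV:diffeq:2} via \eqref{bzx:FV:diffeq:0} as $C^{-2}r_1^{-2}\wtil b_z''+2\tau C^{-1}\wtil b_z'=(-z+Cy+2s_1/r_1-r_1^{2}\tau^{2})\wtil b_z$ and perform the same splitting there; but the splitting itself cannot be bypassed, and without it the computation stalls at the display above.
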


\begin{proof} Equation~\eqref{curlyA:FV:diff:1} follows from the definition of $\Aa_z$, \eqref{bzx:FV:diffeq:0} and
integration by parts. Now we check the formula \eqref{curlyA:FV:diff:2}. From
\eqref{curlyA:FV} we have
\begin{eqnarray*}
&& r_2^{-2}\frac{\pp^2}{\pp z^2}\Aa_{z}(x) \\ &=& r_2^{-2}\frac{\pp^2}{\pp
z^2}b_{z}(x)
-r_2^{-2}\ct\int_0^{\infty}  \Ai(x+y+\si)\frac{\pp^2}{\pp z^2} \wtil b_{z}(y)\ud y \\
&=& r_2^{-2}\frac{\pp^2}{\pp z^2} b_{z}(x) + r_1^{-2}\ct\int_0^{\infty}
\Ai(x+y+\si)\frac{\pp^2}{\pp z^2} \wtil b_{z}(y)\ud y - C\ct\int_0^{\infty}
\Ai(x+y+\si)\wtil b_{z}''(y)\ud y
\end{eqnarray*}
where in the second equality we used $r_1^{-2}+r_2^{-2}=C^3$ and
\eqref{bzx:FV:diffeq:0}. Hence
\begin{multline}\label{proof:curlyA:FV:2}
r_2^{-2}\frac{\pp^2}{\pp z^2}\Aa_{z}(x)+2\tau \frac{\pp}{\pp z}\Aa_{z}(x)=
\left(r_2^{-2}\frac{\pp^2}{\pp z^2} b_{z}(x)+2\tau \frac{\pp}{\pp
z}b_{z}(x)\right) \\ + \ct\int_0^{\infty}
\Ai(x+y+\si)\left(r_1^{-2}\frac{\pp^2}{\pp z^2} \wtil b_{z}(y)-2\tau
\frac{\pp}{\pp z} \wtil b_{z}(y)\right)\ud y -C\ct\int_0^{\infty}
\Ai(x+y+\si)\wtil b_{z}''(y)\ud y.
\end{multline}
In the first two terms in the right hand side of \eqref{proof:curlyA:FV:2} we
use the differential equations
\eqref{bzx:FV:diffeq:1}--\eqref{bzx:FV:diffeq:2}, and in the third term we
integrate by parts twice and subsequently use the Airy differential equation
$\Ai''(x)=x\Ai(x)$. The lemma then follows from a straightforward calculation,
taking into account that
\begin{equation}\label{sigma:FV:simplify}
\left(-z+Cy+2\frac{s_1}{r_1}-r_1^2\tau^2 \right)-C\left(x+y+\sigma\right) =
-z-Cx-2\frac{s_2}{r_2}+r_2^2\tau^2
\end{equation}
thanks to \eqref{C:RH}.
\end{proof}

\begin{lemma}\label{lemma:equiv:ways:bis}
The formulas \eqref{p1:Airy:op} can be written in the equivalent ways
\begin{align} \label{p1:Airy:R:FV} p_1(z)  & =\wtil\Aa_{z}(0)+
\int_{0}^{\infty}R(x,0)\wtil\Aa_{z}(x)\ud x \\
&= \label{p1:Airy:Q:FV} \wtil b_{z}(0)-\ct^{-1}\int_{0}^{\infty}
Q(x)\Aa_z(x)\ud x, \\ \label{p2:Airy:R:FV} p_2(z) &=\Aa_{z}(0)+
\int_{0}^{\infty}R(x,0)\Aa_{z}(x)\ud
x\\
\label{p2:Airy:Q:FV} &= b_z(0)-\ct\int_{0}^{\infty} Q(x)\wtil\Aa_{z}(x)\ud x.
\end{align}
\end{lemma}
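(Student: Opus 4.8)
The plan is to obtain all four identities by direct substitution, exactly as in the one-line proof of Lemma~\ref{lemma:equiv:ways}; no new estimates are needed, only the two operator identities collected in Lemma~\ref{lemma:RQequiv} together with the Dirac-delta notation \eqref{Dirac:delta}.

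First I would read off the resolvent formulas \eqref{p1:Airy:R:FV} and \eqref{p2:Airy:R:FV}. By \eqref{Dirac:delta} the kernel $(\Een-\Kaa_{\Ai,\si})^{-1}(x,0)$ equals $\delta(x)+R(x,0)$ on $[0,\infty)$; inserting this into the definition \eqref{p1:Airy:op} of $p_1$ and integrating out the delta mass at $x=0$ gives $p_1(z)=\wtil\Aa_z(0)+\int_0^\infty R(x,0)\wtil\Aa_z(x)\ud x$, and the same manipulation applied to $p_2$ gives \eqref{p2:Airy:R:FV}. This step is immediate.

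Next, for the $Q$-formulas \eqref{p1:Airy:Q:FV} and \eqref{p2:Airy:Q:FV} I would expand the auxiliary functions via \eqref{curlyA:FV}. Writing $\vecA_\si$ for the Airy operator \eqref{operator:A1}, we have $\wtil\Aa_z=\wtil b_z-\ct^{-1}\vecA_\si b_z$, so \eqref{p1:Airy:op} splits $p_1(z)$ into two integrals against $(\Een-\Kaa_{\Ai,\si})^{-1}(x,0)$: the one with $\wtil b_z$ is handled as above and yields $\wtil b_z(0)+\int_0^\infty R(x,0)\wtil b_z(x)\ud x$, while the one with $\vecA_\si b_z$ equals $\int_0^\infty Q(x)b_z(x)\ud x$ by \eqref{operator:Airy:1}. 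On the other hand, in the claimed right-hand side $\wtil b_z(0)-\ct^{-1}\int_0^\infty Q(x)\Aa_z(x)\ud x$ I would expand $\Aa_z=b_z-\ct\,\vecA_\si\wtil b_z$ and use \eqref{operator:Airy:2} to rewrite $\int_0^\infty Q(x)[\vecA_\si\wtil b_z](x)\ud x=\int_0^\infty R(x,0)\wtil b_z(x)\ud x$; this produces exactly the same expression, proving \eqref{p1:Airy:Q:FV}. The identities for $p_2$ follow by the symmetric computation, interchanging $b_z\leftrightarrow\wtil b_z$ and $\ct\leftrightarrow\ct^{-1}$.

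The argument is entirely routine. The only care needed is in the application of Fubini when swapping the order of integration (justified by the fast decay of $R_\si$ and of the Airy function, hence of $b_z$ and $\wtil b_z$, cf. \eqref{R:estimate}--\eqref{Airy:asy}) and in keeping the bookkeeping of the constants $\ct^{\pm1}$ and of the tilde/non-tilde functions straight. I do not expect any genuine obstacle.
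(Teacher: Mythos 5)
Your proposal is correct and follows exactly the paper's route: the paper's proof is the one-liner ``Immediate from Lemma~\ref{lemma:RQequiv} and the definitions,'' and what you write out (the Dirac-delta splitting of $(\Een-\Kaa_{\Ai,\si})^{-1}(x,0)$ via \eqref{Dirac:delta}, plus the identities \eqref{operator:Airy:1}--\eqref{operator:Airy:2} applied after expanding \eqref{curlyA:FV}) is precisely that argument made explicit. The bookkeeping of $\ct^{\pm1}$ and the tilde/non-tilde functions in your computation checks out.
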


\begin{proof} Immediate from Lemma~\ref{lemma:RQequiv} and the definitions.
\end{proof}

\subsection{Differential equation for $p_j$}
\label{subsection:diffeq:pee:check}

In this section we check that the expressions for $p_1(z)$, $p_2(z)$ in
\eqref{p1:Airy:Q:FV}--\eqref{p2:Airy:R:FV} satisfy the differential equation
\eqref{diffeq:pee:1} (with $m_j:=p_j$). From \eqref{p1:Airy:Q:FV} we have
\begin{equation}\label{diff:p1:FV} p_1'(z) =
\frac{\pp}{\pp z}\left[ \wtil
b_{z}(0)\right]-\ct^{-1}\int_{0}^{\infty}Q(x)\frac{\pp}{\pp
z}\left[\Aa_z(x)\right]\ud x.
\end{equation}
We calculate the second term
\begin{eqnarray}\nonumber & &
\int_{0}^{\infty}Q(x)\frac{\pp}{\pp z}\left[\Aa_z(x)\right]\ud x \\
\nonumber &=& C^{-1}\left(\int_{0}^{\infty}Q(x)\Aa_z'(x)\ud x -\ct u \wtil b_{z}(0) \right) \\
\nonumber &=& -C^{-1}\left(\int_{0}^{\infty}
Q'(x)\Aa_z(x)\ud x+q\Aa_z(0)+\ct u\wtil b_{z}(0)\right) \\
\nonumber &=& -C^{-1}\left(\int_{0}^{\infty}
\left[P(x)+qR(x,0)-uQ(x)\right]\Aa_z(x)\ud x+q\Aa_z(0)+\ct u\wtil b_{z}(0)\right) \\
&=& \label{terms3:v:0:FV} -C^{-1}\left(\ct u p_1(z)+q p_2(z)+\int_{0}^{\infty}
P(x)\Aa_z(x)\ud x\right)
\end{eqnarray}
where the first equality uses \eqref{curlyA:FV:diff:1} and \eqref{TW:u}, the
second one uses integration by parts and \eqref{TW:q}, the third one uses
\eqref{diff:Qx}, and the fourth equality uses
\eqref{p1:Airy:Q:FV}--\eqref{p2:Airy:R:FV}.

From \eqref{diff:p1:FV}--\eqref{terms3:v:0:FV} and $r_1^{-2}+r_2^{-2}=C^3$, we
get
\begin{multline}\label{diff:p1:temp:FV} r_1^{-2} p_1'(z) =
r_1^{-2}\frac{\pp}{\pp z}[\wtil b_{z}(0)] +
r_2^{-2}\ct^{-1}\int_{0}^{\infty}Q(x)\frac{\pp}{\pp z}\left[\Aa_z(x)\right]\ud x\\
+C^2\ct^{-1}\left(\ct u p_1(z)+q p_2(z)+\int_{0}^{\infty} P(x)\Aa_z(x)\ud
x\right).
\end{multline}
By differentiating \eqref{diff:p1:temp:FV} with respect to $z$, we get
\begin{multline*}r_1^{-2} p_1''(z)-C^2\ct^{-1}q p_2'(z)-2\tau p_1'(z)
= r_1^{-2}\frac{\pp^2}{\pp z^2}\left[  \wtil b_{z}(0)\right]-2\tau
\frac{\pp}{\pp z}\left[  \wtil b_{z}(0)\right] \\ + \ct^{-1}\int_{0}^{\infty}
Q(x)\left(r_2^{-2}\frac{\pp^2}{\pp z^2}\left[\Aa_z(x)\right]+2\tau
\frac{\pp}{\pp z}\left[ \Aa_z(x)\right]\right)\ud x \\ C^2\ct^{-1} \left(Du
p_1'(z)+\int_{0}^{\infty} P(x)\frac{\pp}{\pp z}\left[\Aa_z(x)\right]\ud
x\right),
\end{multline*}
where the last term in the left hand side was expanded using
\eqref{diff:p1:FV}. Equivalently,
\begin{multline}\label{terms3:v:FV} r_1^{-2} p_1''(z)-C^2\ct^{-1}q p_2'(z)-2\tau p_1'(z)
= r_1^{-2}\frac{\pp^2}{\pp z^2}\left[  \wtil b_{z}(0)\right]-2\tau
\frac{\pp}{\pp z}\left[  \wtil b_{z}(0)\right]\\ +
C\ct^{-1}\left(C^{-1}\int_{0}^{\infty} Q(x)\left(r_2^{-2}\frac{\pp^2}{\pp
z^2}\left[\Aa_z(x)\right]+2\tau \frac{\pp}{\pp z}\left[
\Aa_z(x)\right]\right)\ud x\right. \\ \left. +C\ct u p_1'(z)+C\int_{0}^{\infty}
P(x)\frac{\pp}{\pp z}\left[\Aa_z(x)\right]\ud x\right).
\end{multline}
We will calculate each of the terms in the right hand side of
\eqref{terms3:v:FV}. We start with
\begin{eqnarray}
C\int_{0}^{\infty} P(x)\frac{\pp}{\pp z}\left[\Aa_z(x)\right]\ud x \nonumber
&=& \int_{0}^{\infty} P(x) \Aa_{z}'(x)\ud x -
\ct v \wtil b_{z}(0)  \\
\nonumber &=& -\left(\int_{0}^{\infty} P'(x) \Aa_{z}(x)\ud x +p \Aa_{z}(0) +\ct
v\wtil b_{z}(0) \right)
\end{eqnarray}
where the first equality follows from \eqref{curlyA:FV:diff:1} and
\eqref{TW:v}, and the second equality uses integration by parts and
\eqref{TW:p}. Consequently,
\begin{multline}\label{term2:v:FV}
C\int_{0}^{\infty} P(x)\frac{\pp}{\pp z}\left[\Aa_z(x)\right]\ud x  =
-\int_{0}^{\infty} (x+\si)Q(x) \Aa_{z}(x)\ud x  \\
-u\int_{0}^{\infty} P(x) \Aa_{z}(x)\ud x-p p_2(z) -2\ct v p_1(z)+\ct v \wtil
b_{z}(0)
\end{multline}
by virtue of \eqref{diff:Px} and \eqref{p1:Airy:Q:FV}--\eqref{p2:Airy:R:FV}.

Next we calculate the third term in the right hand side of \eqref{terms3:v:FV},
\begin{multline}\label{term3:v:FV}
C^{-1}\int_{0}^{\infty}Q(x)\left(r_2^{-2}\frac{\pp^2}{\pp
z^2}\left[\Aa_z(x)\right]+2\tau \frac{\pp}{\pp z}\left[
\Aa_z(x)\right]\right)\ud x  \\ =
C^{-1}\int_{0}^{\infty}\left(z+Cx+2\frac{s_2}{r_2}-r_2^{2}\tau^2 \right) Q(x)
\Aa_{z}(x)\ud x  + \ct(u \wtil b_{z}'(0) - v \wtil b_{z}(0)),
\end{multline}
on account of \eqref{curlyA:FV:diff:2} and \eqref{TW:u}--\eqref{TW:v}. By
adding \eqref{term2:v:FV}--\eqref{term3:v:FV} and canceling terms we get
\begin{multline}\label{term23:v:FV}
 C\int_{0}^{\infty}
P(x)\frac{\pp}{\pp z}\left[\Aa_z(x)\right]\ud x
+C^{-1}\int_{0}^{\infty}Q(x)\left(r_2^{-2}\frac{\pp^2}{\pp
z^2}\left[\Aa_z(x)\right]+2\tau \frac{\pp}{\pp z}\left[
\Aa_z(x)\right]\right)\ud x \\
 =
-u\int_{0}^{\infty} P(x) \Aa_{z}(x)\ud x + C^{-1}\left(z-
2\frac{s_1}{r_1}+r_1^2\tau^2 \right)\int_{0}^{\infty}Q(x) \Aa_{z}(x)\ud x\\
+ \ct u \wtil b_{z}'(0)-2\ct v p_1(z)-p p_2(z),
\end{multline}
where the factor between brackets in front of the second integral in the right
hand side was obtained via \eqref{sigma:FV:simplify}. Finally, we calculate the
fourth term in the right hand side of \eqref{terms3:v:FV},
\begin{eqnarray}\nonumber
C\ct u p_1'(z) &=&  C\ct u \frac{\pp}{\pp z}[\wtil b_{z}(0)]+\ct u^2 p_1(z)+uq
p_2(z)+u\int_{0}^{\infty} P(x)\Aa_z(x)\ud x \\
\label{term4:v:FV} &=&  -\ct u \wtil b_{z}'(0)+\ct u^2 p_1(z)+uq
p_2(z)+u\int_{0}^{\infty} P(x)\Aa_z(x)\ud x
\end{eqnarray}
where the first equality follows from \eqref{diff:p1:FV}--\eqref{terms3:v:0:FV}
and the second one from \eqref{bzx:FV:diffeq:0}. Inserting
\eqref{term23:v:FV}--\eqref{term4:v:FV} in the right hand side of
\eqref{terms3:v:FV} and canceling terms we get
\begin{eqnarray*} & & r_1^{-2}p_1''(z)-C^2\ct^{-1}q p_2'(z)-2\tau p_1'(z) \\ &=& \left(r_1^{-2}\frac{\pp^2}{\pp
z^2}\left[\wtil b_{z}(0)\right]-2\tau \frac{\pp}{\pp z}\left[\wtil
b_{z}(0)\right]\right) +\ct^{-1}\left(z- 2\frac{s_1}{r_1}+
r_1^2\tau^2\right)\int_{0}^{\infty} Q(x)\Aa_{z}(x)\ud x
\\ & & +C [u^2-2v] p_1(z)+C\ct^{-1}[uq-p] p_2(z) \\
&=& \left[-z+ 2\frac{s_1}{r_1}-r_1^2\tau^2 +C (u^2- 2v)\right]
p_1(z)+C\ct^{-1}[uq-p] p_2(z)
\\ &= & \left[-z+ 2\frac{s_1}{r_1}-r_1^2\tau^2 +C q^2\right] p_1(z)-\left[C\ct^{-1} q'\right] p_2(z)
\end{eqnarray*}
where the second equality follows from \eqref{p1:Airy:Q:FV} and
\eqref{bzx:FV:diffeq:2}, and the last equality uses \eqref{TW:rel1} and
\eqref{TW:rel5}. We have established the desired differential equation
\eqref{diffeq:pee:1}.

\subsection{Other differential equations}

Denote by $\what N_{j,k}(z)$ the right hand side of the formula for $\what
M_{j,k}(z)$ in \eqref{M11:Airy:op}. Let the vectors $\pee(z),\mm(z)$ have
entries
$$ p_j(z) =\what N_{j,1}(z)+\what N_{j,2}(z),\qquad m_j(z)=\what N_{j,1}(z)-\what N_{j,2}(z),$$
for $j=1,\ldots,4$. For $\pee(z)$ this is compatible with \eqref{p1:Airy:op}.
We have already shown in Section~\ref{subsection:diffeq:pee:check} that
$\pee(z)$ satisfies the differential equation \eqref{diffeq:pee:1} (with
$m_j:=p_j$). We claim that the same statement holds for $\mm(z)$. This can be
shown by going through the proofs in
Sections~\ref{subsection:prep}--\ref{subsection:diffeq:pee:check} again and
replacing the appropriate plus signs by minus signs and vice versa. We leave
this to the reader. Summarizing, both $\pee(z)$ and $\mm(z)$ satisfy the
differential equation \eqref{diffeq:pee:1}. By symmetry they also satisfy the
differential equation \eqref{diffeq:pee:2}. Finally, \eqref{diffeq:pee:3}--\eqref{diffeq:pee:4} is
valid by construction. Proposition~\ref{prop:diffeq:pee}(b) implies that
$\pee(z)$ and $\mm(z)$ are fixed linear combinations of the columns of $\what
M(z)$. By linearity, the same holds for the $\what N_{j,k}(z)$.

\subsection{Asymptotics}

In view of the previous section, Theorem~\ref{theorem:Airyformulas:FV} will be
proved if we can show that the expressions for $\what M_{j,1}(z)$ and $\what
M_{j,2}(z)$ in \eqref{M11:Airy:op} and \eqref{diffeq:pee:3}--\eqref{diffeq:pee:4} satisfy the
asymptotics for $z\to\infty$ in the RH problem~\ref{rhp:modelM}. It will be
enough to prove the asymptotics for the second column  $\what M_{j,2}(z)$.
Moreover, it is sufficient to let $z$ go to infinity along the positive real
line. Indeed, first observe that the second columns of $M(z)$ and $\what M(z)$
are equal if $\Re z>0$. Furthermore, the second column of $M(z)$ is recessive
with respect to the other columns as $z\to +\infty$, due to
\eqref{M:asymptotics}. So if we can prove that the expressions for $\what
M_{j,2}(z)$ in \eqref{M11:Airy:op} share this same recessive asymptotic
behavior for $z\to +\infty$, then we are done.

The proof of the above asymptotics is now an easy consequence of
\eqref{R:estimate}--\eqref{Airy:asy}. This ends the proof of
Theorem~\ref{theorem:Airyformulas:FV}.

\section{Proof of Proposition~\ref{prop:diffeq:pee}}
\label{section:RHLax}

\subsection{Painlev\'e formulas for the residue matrix $M_1$}

Let the parameters $r_1,r_2>0$ and $s_1,s_2,\tau\in\er$ be fixed. In the proof
of Proposition~\ref{prop:diffeq:pee} we will need Painlev\'e formulas for the
entries of the \lq residue\rq\ matrix $M_1=M_1(r_1,r_2,s_1,s_2,\tau)$ in
\eqref{M:asymptotics}. Write this matrix in entrywise form as
\begin{equation} \label{M1}
    M_1 =: \begin{pmatrix} a & b & ic & id \\ - \wtil b & - \wtil a & i\wtil d & i \wtil c \\
    i e & i \wtil f & -\alpha & \wtil \beta \\ i f & i\wtil e & -\beta & \wtil
    \alpha
    \end{pmatrix}
    \end{equation}
for certain numbers $a, \wtil a, b, \wtil b,c,\wtil c,\ldots$ that depend on
$r_1, r_2, s_1, s_2,\tau$. We will sometimes write $a(r_1,r_2,s_1,s_2,\tau)$,
$b(r_1,r_2,s_1,s_2,\tau)$, etc., to denote the dependence on the parameters.

In the symmetric case $r_1=r_2$, $s_1=s_2$ we are allowed to drop all the
tildes from \eqref{M1}, while in the case $\tau=0$ we can replace all the Greek
letters in \eqref{M1} by their Roman counterparts and put $\wtil d=d$ and
$\wtil f=f$. These are special instances of the next lemma.

\begin{lemma}\label{lemma:symmetry:M1}
(Symmetry relations.) Let $r_1,r_2>0$ and $s_1,s_2,\tau\in\er$ be fixed. The 16
entries $x=x(r_1,r_2,s_1,s_2,\tau)$ of the matrix \eqref{M1} are all
real-valued and they satisfy the symmetry relations
\begin{equation}
x(r_1,r_2,s_1,s_2,\tau) = \wtil x(r_2,r_1,s_2,s_1,\tau),
\end{equation}
for any $x=a,b,c,d,e,f,\alpha,\beta$, and
\begin{equation}
x(r_1,r_2,s_1,s_2,\tau) = \chi(r_1,r_2,s_1,s_2,-\tau),
\end{equation}
for any $x=a,b,\wtil a,\wtil b,c,\wtil c,d,e,\wtil e,f$, where we write
$\chi=\alpha,\beta,\wtil\alpha,\wtil\beta,c,\wtil c,\wtil d,e,\wtil e,\wtil f$,
respectively.
\end{lemma}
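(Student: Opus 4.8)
The plan is to deduce all three assertions from the discrete symmetries of the RH matrix $M(z)$, by substituting the large-$z$ expansion \eqref{M:asymptotics} and comparing the coefficient of $1/z$. Write $F(z)=\diag\bigl((-z)^{-1/4},z^{-1/4},(-z)^{1/4},z^{1/4}\bigr)\,\Aa\,\diag\bigl(e^{\theta_1(z)},\dots,e^{\theta_4(z)}\bigr)$ for the $z$-dependent right factor, so that $M(z)=\bigl(I+M_1/z+O(z^{-2})\bigr)F(z)$. The jump matrices of Figure~\ref{fig:modelRHP} are constant and the union of rays is invariant under $z\mapsto\bar z$ and under $z\mapsto-z$; hence, by uniqueness (Proposition~\ref{prop:solv}), each of
\[
\Lambda\,\overline{M(\bar z)}\,\Lambda,\qquad Q\,M(-z;r_2,r_1,s_2,s_1,\tau)\,Q,\qquad K^{-1}M^{T}(z;r_1,r_2,s_1,s_2,-\tau)K
\]
coincides with $M(z;r_1,r_2,s_1,s_2,\tau)$, provided one checks that it again solves RH problem~\ref{rhp:modelM}. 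Here $\Lambda=\diag(1,1,-1,-1)$, $Q$ is the signed permutation with $Qe_1=e_2$, $Qe_2=e_1$, $Qe_3=-e_4$, $Qe_4=-e_3$, and $K$ is the matrix of \eqref{permmces:K0}; these are the symmetries recorded in Lemma~\ref{lemma:symmetries}, the last being the full--$M$ analogue of \eqref{symmetry:inversetranspose:0}. The asymptotic check uses the elementary identities, read off from \eqref{def:theta1}, $\overline{\theta_j(\bar z)}=\theta_j(z)$ (real parameters), $\theta_j(-z;r_1,r_2,s_1,s_2,\tau)=\theta_{\pi(j)}(z;r_2,r_1,s_2,s_1,\tau)$ with $\pi=(1\,2)(3\,4)$, and $\theta_j(z;\dots,-\tau)=-\theta_{\rho(j)}(z;\dots,\tau)$ with $\rho=(1\,3)(2\,4)$; together with $\Aa^{T}=\Aa$, $\overline{\Aa}=\Aa^{-1}=\Lambda\Aa\Lambda$, $Q\Aa Q=\Aa$, $K^{-1}\Aa K=\Aa^{-1}$, and the (easy) transformations of the diagonal prefactor, e.g.\ $Q\diag\bigl((-z)^{-1/4},z^{-1/4},(-z)^{1/4},z^{1/4}\bigr)Q=\diag\bigl(z^{-1/4},(-z)^{-1/4},z^{1/4},(-z)^{1/4}\bigr)$ and $K^{-1}\diag\bigl((-z)^{-1/4},z^{-1/4},(-z)^{1/4},z^{1/4}\bigr)K=\diag\bigl((-z)^{1/4},z^{1/4},(-z)^{-1/4},z^{-1/4}\bigr)$.

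Comparing the coefficients of $1/z$ in the three resulting matrix identities then yields
\[
\overline{M_1}=\Lambda M_1\Lambda,\qquad M_1(r_1,r_2,s_1,s_2,\tau)=-Q\,M_1(r_2,r_1,s_2,s_1,\tau)\,Q,\qquad M_1(\dots,-\tau)=-K\,M_1(\dots,\tau)^{T}K^{-1},
\]
where the minus signs in the second and third identities come from $1/(-z)=-1/z$ and from $(I+M_1/z)^{-1}=I-M_1/z+\cdots$ respectively. It remains to read these off entrywise against the notation \eqref{M1}.

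Because $\Lambda=\diag(1,1,-1,-1)$, the identity $\overline{M_1}=\Lambda M_1\Lambda$ says precisely that the two diagonal $2\times2$ blocks of $M_1$ are real while the two off-diagonal $2\times2$ blocks are purely imaginary; this both justifies the placement of the factors $i$ in \eqref{M1} and proves that the sixteen quantities $a,\wtil a,b,\wtil b,c,\wtil c,d,\wtil d,e,\wtil e,f,\wtil f,\al,\wtil\al,\be,\wtil\be$ are real-valued. Next, conjugation by $Q$ sends the $(j,k)$ entry of a matrix to $(\pm1)$ times its $(\pi(j),\pi(k))$ entry, and a direct entry-by-entry inspection shows that $M_1(r_1,r_2,s_1,s_2,\tau)=-Q M_1(r_2,r_1,s_2,s_1,\tau)Q$ becomes exactly the substitution $x\mapsto\wtil x$ with the parameter swap $(r_1,s_1)\leftrightarrow(r_2,s_2)$ --- the overall minus sign and the sign pattern of $Q$ conspire so that the relations carry no residual sign --- which is the first family of relations. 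Finally, $N\mapsto-KN^{T}K^{-1}$ sends the $(j,k)$ entry to $(\mp1)$ times the $(\rho(k),\rho(j))$ entry; running through the sixteen entries of \eqref{M1}, and using the reality just established, one obtains precisely $x(r_1,r_2,s_1,s_2,\tau)=\chi(r_1,r_2,s_1,s_2,-\tau)$ for the listed pairs, the quantities $c,\wtil c,e,\wtil e$ sitting on the diagonal of the permutation induced by $\rho$ (with the sign coming out $+$ for them) and hence being their own partners.

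The only genuine difficulty is the bookkeeping. One must fix the branch cuts of the fractional powers $(-z)^{\pm1/4},z^{\pm1/4}$ (and of $(-z)^{3/2},z^{1/2}$ inside the $\theta_j$), verify that $z\mapsto\bar z$ and $z\mapsto-z$ act on them as asserted --- the answer depends on which of the sectors $\Omega_k$ contains $z$, so one argues in a fixed sector, say $\Omega_2$, and extends by analyticity --- and check that the three transformed functions really satisfy the jump relations of RH problem~\ref{rhp:modelM}, i.e.\ that the reflected and re-oriented ray carries the jump $J_k$ or $J_k^{-1}$ and that, after the stated constant conjugation, this matches the jump prescribed in Figure~\ref{fig:modelRHP}. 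None of this is conceptually hard, but it is sign-sensitive; everything else is routine matrix algebra. $\bol$
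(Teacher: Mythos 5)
Your proposal follows essentially the same route as the paper: establish the three discrete symmetries of $M(z)$ by the usual uniqueness argument for the RH problem (these are the paper's Lemma~\ref{lemma:symmetries}), extract the corresponding identities for the residue matrix $M_1$ from the $1/z$ coefficient (Corollary~\ref{cor:symmetries}), and read them off entrywise against \eqref{M1}. Your entrywise bookkeeping and the three residue identities you arrive at are correct and coincide with \eqref{symmetry:conjugate:M1}--\eqref{symmetry:minuszeta:M1}, so the conclusion is the paper's.

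One point needs fixing, however: the third symmetry as you display it, namely that $K^{-1}M^{T}(z;r_1,r_2,s_1,s_2,-\tau)K$ coincides with $M(z;r_1,r_2,s_1,s_2,\tau)$, is false as stated. Transposition alone converts the right-jump relation $M_+=M_-J_k$ into a left-jump relation and places the exponential factors $e^{\theta_j}$ on the wrong side of the asymptotic expansion \eqref{M:asymptotics}, so this object does not solve RH problem~\ref{rhp:modelM} and the uniqueness argument you invoke would break down for it. The correct statement --- the actual full-$M$ analogue of \eqref{symmetry:inversetranspose:0} --- is $M^{-1}(z;\tau)=K^{-1}M^{T}(z;-\tau)K$, equivalently $M^{-T}(z;\tau)=K^{-1}M(z;-\tau)K$ as in \eqref{symmetry:inversetranspose}; it is the inverse transpose that preserves the jump structure (with $J_k\mapsto J_k^{-T}$) and the form of the asymptotics. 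Your own derivation betrays that this is what you meant: the minus sign you attribute to $(I+M_1/z)^{-1}=I-M_1/z+\cdots$ only arises when the inverse is present, and the resulting identity $M_1(\ldots,-\tau)=-K\,M_1(\ldots,\tau)^{T}K^{-1}$ is exactly \eqref{symmetry:inversetranspose:M1}. So this is a transcription slip rather than a conceptual error, but as written the key displayed symmetry is wrong and should be corrected before the uniqueness check is carried out.
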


The proof of Lemma~\ref{lemma:symmetry:M1} follows from
Section~\ref{subsection:symmetry:RH} below.

Now we relate the entries of the matrix $M_1$ to the Hastings-McLeod solution
$q(x)$ of Painlev\'e~II and the Hamiltonian $u(x)$ in \eqref{def:Hamiltonian}.
The next theorem was proved for the special case $\tau=0$ in \cite{DKZ} and in
the symmetric setting $r_1=r_2$, $s_1=s_2$ in \cite{Delvaux2012,DG}. In the
general case we have the extra exponential factor $D$ in \eqref{C:RH}.

\begin{theorem}\label{theorem:M1:nonsymm} (Painlev\'e formulas.) Let the parameters
$r_1,r_2>0$ and $s_1,s_2,\tau\in\er$ be fixed. The entries in the top right
$2\times 2$ block of \eqref{M1} are given by \begin{eqnarray}
\label{d:Painleve2:nonsymm} d &=& (r_2C\ct)^{-1} q(\sigma)\\
\label{dtil:Painleve2:nonsymm}\wtil d &=& (r_1C)^{-1}\ct
q(\sigma)\\
\label{c:Hamiltonian:nonsymm}c &=& r_1^{-1}(s_1^2-C^{-1} u\left(\sigma\right))
\\
\label{ctil:Hamiltonian:nonsymm} \wtil c &=&
r_2^{-1}(s_2^2-C^{-1}u\left(\sigma\right))
\end{eqnarray}
where $q$ is the Hastings-McLeod solution to Painlev\' e~II
\eqref{def:Painleve2}--\eqref{def:HastingMcLeod}, $u$ is the Hamiltonian
\eqref{def:Hamiltonian}, and with the constants $C,\ct,\si$ given by
\eqref{C:RH}. Moreover, some of the other entries in \eqref{M1} are given by
\begin{eqnarray}
\label{b:PII} b &=& (\wtil c+\tau r_2) d -(r_2^{2}C^{2}\ct)^{-1} q'(\si) \\
\label{btil:PII} \wtil b &=& (c+\tau r_1) \wtil d -(r_1^{2}C^{2})^{-1}\ct q'(\si)\\
\label{beta:PII} \beta &=& (\wtil c-\tau r_2) \wtil d-(r_1r_2C^{2})^{-1}\ct q'(\si)\\
\label{betatil:PII}\wtil\beta &=& (c-\tau r_1) d-(r_1r_2C^{2}\ct)^{-1}q'(\si)
\end{eqnarray}
and \begin{eqnarray}\label{f:PII} r_1f &=& -\frac{r_2}{r_1^2+r_2^2} \frac{\pp
d}{\pp\tau}+(-r_1 c-r_2 \wtil c+r_1^2\tau +r_2^2\tau) b -r_1 d^2\wtil d+r_2
\wtil c^2 d -2s_2 d\\
\label{ftil:PII}r_2\wtil f &=& -\frac{r_1}{r_1^2+r_2^2} \frac{\pp \wtil
d}{\pp\tau} +(-r_1 c-r_2 \wtil c+r_1^2\tau+r_2^2\tau)\wtil b-r_2\wtil d^2 d+r_1
c^2 \wtil d-2s_1\wtil d.
\end{eqnarray}
\end{theorem}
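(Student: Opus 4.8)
The plan is to read off all four blocks of formulas from the Lax pair of RH problem~\ref{rhp:modelM}, bootstrapping from the cases already in the literature — $\tau=0$ in \cite{DKZ} and the symmetric case $r_1=r_2$, $s_1=s_2$ in \cite{Delvaux2012,DG} — and using the symmetry relations of Lemma~\ref{lemma:symmetry:M1} to halve the bookkeeping, so that in practice it suffices to produce $d$ and $c$ (then $\wtil d,\wtil c$ follow from the swap $r_1\leftrightarrow r_2$, $s_1\leftrightarrow s_2$, and $\beta,\wtil\beta$ from $\tau\mapsto-\tau$). Since each jump matrix in Figure~\ref{fig:modelRHP} is constant, $\bigl(\frac{\pp}{\pp z}M(z)\bigr)M(z)^{-1}$ has no jump across any $\Gamma_k$ and extends analytically to $\cee\setminus\{0\}$; condition~(4) of RH problem~\ref{rhp:modelM} rules out a pole at $0$, and by \eqref{M:asymptotics}--\eqref{def:theta1} (the algebraic prefactor contributes only $O(1/z)$, while after conjugation by the diagonal factor and by $\mathcal{A}$ the exponentials $e^{\theta_j}$ contribute a polynomial of degree one) it equals a polynomial $A(z)=A_1z+A_0$. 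Inserting $M=(I+M_1/z+M_2/z^2+\cdots)\,\diag(\cdots)\,\mathcal{A}\,\diag(e^{\theta_j})$ into $\frac{\pp}{\pp z}M=A(z)M$ and matching coefficients of $z^1,z^0,z^{-1}$ fixes $A_1,A_0$ in terms of $M_1$ and yields algebraic identities among the entries of $M_1$ (the $M_2$ contributions cancel).

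\textbf{Deformation equations and Painlev\'e~II.} The parameters $s_1,s_2,\tau$ enter only through the $\theta_j$ in \eqref{def:theta1}, which are polynomial in $z^{1/2}$, so the same no-jump argument gives $\frac{\pp}{\pp s_j}M=B_j(z)M$ and $\frac{\pp}{\pp\tau}M=B_\tau(z)M$ with $B_j,B_\tau$ polynomials of degree one whose coefficients are again expressed through $M_1$. Imposing zero curvature, $\frac{\pp}{\pp z}B_j-\frac{\pp}{\pp s_j}A+[B_j,A]=0$ and the analogue in $\tau$, converts these relations into a closed nonlinear system for the entries of $M_1$ as functions of $(s_1,s_2,\tau)$. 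Setting $q:=r_2CD\,d$ (with $C,D,\si$ as in \eqref{C:RH}) and restricting to a path on which only $\si$ varies — e.g.\ moving $s_1$, so that $\frac{\pp}{\pp s_1}=\frac{2}{r_1C}\frac{d}{d\si}$ on functions of $\si$ — the system collapses to $q''=\si q+2q^3$, i.e.\ \eqref{def:Painleve2}, and to $\frac{d}{d\si}\bigl(C^{-1}u\bigr)=-q^2$ where $u:=C(s_1^2-r_1c)$; a further compatibility relation forces $u$ to depend on the parameters only through $\si$ and to equal $(q')^2-\si q^2-q^4$, matching \eqref{def:Hamiltonian} (compare \eqref{TW:rel3} and \eqref{TW:rel5}). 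The exponential prefactor $D$ in \eqref{C:RH} is precisely the integrating factor appearing when one solves the \emph{linear} $\tau$-ODE for $\log d$; its placement can be cross-checked against the identity $d(\cdot,\tau)\,d(\cdot,-\tau)=r_1/r_2$ demanded by Lemma~\ref{lemma:symmetry:M1} together with the $\tau$-evenness of $\si$ and $c$.

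\textbf{Fixing the transcendent and the remaining formulas.} The system above determines $q$ only up to the choice of a Painlev\'e~II solution, so the Hastings--McLeod branch must still be selected. I would do this by invoking \cite{DKZ}: at $\tau=0$ the theorem holds, and the $\tau$-flow is isomonodromic (the Stokes data encoded by the constant matrices in Figure~\ref{fig:modelRHP} do not depend on $\tau$), so $q$ stays Hastings--McLeod for all $\tau$; alternatively one runs a direct steepest-descent analysis of the RH problem as $\si\to+\infty$ (the off-diagonal blocks of the jumps become negligible) to get $q(\si)\sim\Ai(\si)$ and $u(\si)\to0$. This gives \eqref{d:Painleve2:nonsymm}--\eqref{ctil:Hamiltonian:nonsymm} for $d,c$, hence for $\wtil d,\wtil c$ by Lemma~\ref{lemma:symmetry:M1}; \eqref{b:PII}--\eqref{betatil:PII}, which only relate $b,\wtil b,\beta,\wtil\beta$ to $d,\wtil d,c,\wtil c$ and $q'(\si)$, drop out of the coefficient matching of Step~1--2 (here $q'(\si)$ enters through the $s_j$-deformation of $q(\si)$); and \eqref{f:PII}--\eqref{ftil:PII} come from the $\tau$-equation $\frac{\pp}{\pp\tau}M=B_\tau M$, whose coefficient matching expresses $f,\wtil f$ through $\frac{\pp d}{\pp\tau},\frac{\pp\wtil d}{\pp\tau}$ and the entries already identified (hence the terms $-\frac{r_2}{r_1^2+r_2^2}\frac{\pp d}{\pp\tau}$ and $-\frac{r_1}{r_1^2+r_2^2}\frac{\pp\wtil d}{\pp\tau}$).

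\textbf{Main obstacle.} The genuinely delicate point is not the algebra but establishing that the $\tau$-flow is isomonodromic — that $B_\tau(z)$ really is polynomial with no spurious pole at $z=0$ — which rests on condition~(4) of RH problem~\ref{rhp:modelM} and on the precise shape of the exponents \eqref{def:theta1}; once that is secured, the $\tau=0$ result transports and the Hastings--McLeod identification is automatic. The rest is a lengthy but mechanical $4\times4$ Lax-pair computation, in which the symmetry relations of Lemma~\ref{lemma:symmetry:M1} are the principal labour-saving device and the only real hazard is misplacing the factor $D$.
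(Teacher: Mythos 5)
Your proposal follows essentially the same route as the paper: the Lax matrices $U,V,W$ from the constant-jump/Liouville argument, the zero-curvature equations, reduction to Painlev\'e~II after factoring out the exponential prefactor $\ct$ (the paper's ansatz $d=e^{h}g$), the $\tau=0$ result of \cite{DKZ} to pin down the Hastings--McLeod branch, and the symmetry relations of Lemma~\ref{lemma:symmetry:M1} to generate the remaining entries, with your ``isomonodromic transport from $\tau=0$'' being exactly the paper's rigorous backbone (direct verification that the stated formulas satisfy the compatibility system, plus a uniqueness argument as in \cite{DG}). One non-load-bearing aside is wrong: Lemma~\ref{lemma:symmetry:M1} demands $d(\tau)=\wtil d(-\tau)$, not $d(\tau)\,d(-\tau)=r_1/r_2$; that product identity holds for the constant $\ct$, whereas $d\,\wtil d=(r_1r_2C^{2})^{-1}q^{2}(\si)$.
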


Theorem~\ref{theorem:M1:nonsymm} is proved in
Section~\ref{subsection:proofPII}.

\subsection{Symmetry relations}
\label{subsection:symmetry:RH}

For further use, we collect some elementary results concerning symmetry.

\begin{lemma}\label{lemma:symmetries}
(Symmetry relations.) For any fixed $r_1, r_2,s_1,s_2,\tau$, the RH matrix $M$
satisfies the symmetry relations
\begin{equation}\label{symmetry:conjugate}
\overline{M(\overline{z};r_1,r_2,s_1,s_2,\tau)} =
\begin{pmatrix} I_2 & 0 \\ 0 & -I_2 \end{pmatrix}
M(z;\overline{r_1},\overline{r_2},\overline{s_1},\overline{s_2},\overline{\tau}) \begin{pmatrix} I_2 & 0 \\
0 & -I_2
\end{pmatrix},
\end{equation}
where the bar denotes complex conjugation,
\begin{equation}\label{symmetry:inversetranspose}
M^{-T}(z;r_1,r_2,s_1,s_2,\tau) = K^{-1} M(z;r_1,r_2,s_1,s_2,-\tau) K,
\end{equation} where the superscript ${}^{-T}$ denotes the inverse transpose,
and finally
\begin{equation}\label{symmetry:minuszeta}
M(-z; r_1, r_2, s_1, s_2,\tau) =
 \begin{pmatrix} J & 0 \\ 0 & -J \end{pmatrix}
    M(z; r_2, r_1, s_2, s_1,\tau)   \begin{pmatrix} J & 0
\\ 0 & -J
\end{pmatrix},
\end{equation}
where we denote \begin{equation}\label{permmces:JK}
J=\begin{pmatrix}0&1\\1&0\end{pmatrix},\qquad K = \begin{pmatrix} 0 & I_2 \\
-I_2 & 0 \end{pmatrix}.
\end{equation}
\end{lemma}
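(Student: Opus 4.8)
The plan is to prove all three identities by the same standard device. For each of them, let $N(z)$ denote the right-hand side --- with the indicated substitution of $z$ and of the parameters --- and show that $N$ solves a Riemann--Hilbert problem for which uniqueness is available: RH problem~\ref{rhp:modelM} with parameters $r_1,r_2,s_1,s_2,\tau$ in the cases \eqref{symmetry:conjugate} and \eqref{symmetry:minuszeta}, and the Riemann--Hilbert problem satisfied by $M^{-T}(z;r_1,r_2,s_1,s_2,\tau)$ in the case \eqref{symmetry:inversetranspose}. One then concludes $N = M$ (resp.\ $N = M^{-T}$) from uniqueness (Proposition~\ref{prop:solv}, or the underlying Liouville argument for the transpose--inverse problem). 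Each of the three operations --- $z\mapsto\overline z$ followed by entrywise complex conjugation, $M\mapsto M^{-T}$ together with $\tau\mapsto-\tau$, and $z\mapsto -z$ --- preserves analyticity off the contour and boundedness near the origin, so conditions~(1) and~(4) of the RH problem are immediate, and the work lies in verifying conditions~(2) and~(3).

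First the jump relations. One records how each operation moves the rays: complex conjugation sends $\Gamma_k$ to $\Gamma_{(10-k)\bmod 10}$ and reverses orientation (so $N$ acquires the jump $\diag(I_2,-I_2)\,J_k^{-1}\,\diag(I_2,-I_2)$ on the image ray), $z\mapsto -z$ sends $\Gamma_k$ to $\Gamma_{(k+5)\bmod 10}$ preserving orientation, while $M\mapsto M^{-T}$ fixes the oriented contour and replaces each $J_k$ by $J_k^{-T}$. Because the jump matrices of Figure~\ref{fig:modelRHP} are constant (in particular independent of $r_1,r_2,s_1,s_2,\tau$), matching the jumps reduces to a finite list of matrix identities among $J_0,\dots,J_9$: $\diag(I_2,-I_2)\,J_k^{-1}\,\diag(I_2,-I_2) = J_{(10-k)\bmod 10}$ for \eqref{symmetry:conjugate}; the symplecticity relations $J_k\,K\,J_k^{T} = K$, i.e.\ $J_k^{-T} = K^{-1}J_k K$, for \eqref{symmetry:inversetranspose}; and $\diag(J,-J)\,J_k\,\diag(J,-J) = J_{(k+5)\bmod 10}$ for \eqref{symmetry:minuszeta}. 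These are checked one by one.

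Next the asymptotics~\eqref{M:asymptotics}. Write the prescribed behaviour near infinity as $(I + M_1/z + \cdots)\,D_1(z)\,\Aa\,D_2(z)$ with $D_1(z) = \diag((-z)^{-1/4},z^{-1/4},(-z)^{1/4},z^{1/4})$ and $D_2(z) = \diag(e^{\theta_1},\dots,e^{\theta_4})$. The relevant elementary facts, using principal branches away from the real axis, are the transformation rules for the exponents --- $\overline{\theta_j(\overline z;\overline{r_1},\overline{s_1},\overline\tau)} = \theta_j(z)$, $\theta_j(z;-\tau) = -\theta_{\sigma(j)}(z;\tau)$ with $\sigma = (1\,3)(2\,4)$, and $\theta_j(-z;r_2,r_1,s_2,s_1,\tau) = \theta_{\rho(j)}(z)$ with $\rho = (1\,2)(3\,4)$ --- together with the matrix identities $\diag(I_2,-I_2)\,\overline{\Aa}\,\diag(I_2,-I_2) = \Aa$, $K^{-1}\Aa K = \Aa^{-T}$, and $\diag(J,-J)\,\Aa\,\diag(J,-J) = \Aa$, and the observations that conjugation by $\diag(I_2,-I_2)$ acts trivially on diagonal matrices while conjugation by $K$, respectively by $\diag(J,-J)$, permutes the diagonal entries by $\sigma$, respectively by $\rho$. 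Combining these, in each case the right-hand side $N(z)$ is seen to have an expansion near infinity of precisely the required form (with some new constant residue matrices), so condition~(3) holds. Uniqueness then yields the three identities; as a byproduct it also pins down the induced transformation rules for the residue matrices $M_1,M_2,\dots$, which is exactly what underlies Lemma~\ref{lemma:symmetry:M1} and the relation~\eqref{symmetry:inversetranspose:0}.

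I expect the principal difficulty to be bookkeeping rather than anything conceptual: correctly tracking the orientations of the rays and the $+/-$ side conventions under each of the three operations, so that one consistently obtains $J_{k'}$ versus $J_{k'}^{-1}$ on the image ray, and handling with care the branch cuts of $(-z)^{3/2},(-z)^{1/2},(-z)^{\pm 1/4}$ along the positive real axis and of the corresponding powers of $z$ along the negative real axis. Once the contour combinatorics is settled, the finitely many matrix identities above are mechanical to verify.
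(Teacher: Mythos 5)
Your proposal is correct and follows exactly the paper's route: the paper's own proof simply notes that the two sides of each identity satisfy the same Riemann--Hilbert problem (as in \cite[Sec.~5.1]{DKZ}) and concludes by uniqueness, which is precisely the argument you carry out, with the jump and asymptotic verifications spelled out in more detail than the paper gives. The contour bookkeeping and matrix identities you list (the map $k\mapsto(10-k)\bmod 10$ with jump $J_k^{-1}$ under conjugation, $k\mapsto(k+5)\bmod 10$ under $z\mapsto -z$, $J_kKJ_k^T=K$, and the conjugation rules for $\Aa$ and the diagonal factors) all check out.
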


\begin{proof} This follows as in \cite[Sec.~5.1]{DKZ}. One easily checks that the left and right hand sides of
\eqref{symmetry:conjugate} satisfy the same RH problem. Then
\eqref{symmetry:conjugate} follows from the uniqueness of the solution to this
RH problem. The same argument applies to \eqref{symmetry:inversetranspose} and
\eqref{symmetry:minuszeta}.
\end{proof}

\begin{corollary}\label{cor:symmetries}
For any fixed $r_1, r_2,s_1,s_2,\tau$, the residue matrix $M_1$ in
\eqref{M:asymptotics}, \eqref{M1} satisfies the symmetry relations
\begin{eqnarray}\label{symmetry:conjugate:M1}
\overline{M_1(r_1,r_2,s_1,s_2,\tau)} &=&
\begin{pmatrix} I_2 & 0 \\ 0 & -I_2 \end{pmatrix}
M_1(\overline{r_1},\overline{r_2},\overline{s_1},\overline{s_2},\overline{\tau}) \begin{pmatrix} I_2 & 0 \\
0 & -I_2
\end{pmatrix},
\\ \label{symmetry:inversetranspose:M1}
M_1^{T}(r_1,r_2,s_1,s_2,\tau) &=& -K^{-1} M_1(r_1,r_2,s_1,s_2,-\tau) K,
\\ \label{symmetry:minuszeta:M1}
M_1(r_1, r_2, s_1, s_2,\tau) &=&
 -\begin{pmatrix} J & 0 \\ 0 & -J \end{pmatrix}
    M_1(r_2, r_1, s_2, s_1,\tau)   \begin{pmatrix} J & 0
\\ 0 & -J
\end{pmatrix},
\end{eqnarray}
with the notations $J,K$ in \eqref{permmces:JK}.
\end{corollary}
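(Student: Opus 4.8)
The plan is to recall the three matrix identities satisfied by $M$ in Lemma~\ref{lemma:symmetries}, to observe that the explicit prefactor in the expansion \eqref{M:asymptotics} satisfies the \emph{same} three identities (with $M$ replaced by that prefactor), and then to extract the relations for $M_1$ from the coefficient of $1/z$.

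Write $M(z)=P(z)G(z)$, where $G(z)=\diag((-z)^{-1/4},z^{-1/4},(-z)^{1/4},z^{1/4})\,\Aa\,\diag(e^{\theta_1(z)},e^{\theta_2(z)},e^{\theta_3(z)},e^{\theta_4(z)})$ is the prefactor of \eqref{M:asymptotics} and $P(z)=I+M_1/z+M_2/z^2+O(z^{-3})$. The only step requiring any computation is to check that $G$ obeys the analogues of \eqref{symmetry:conjugate}, \eqref{symmetry:inversetranspose}, \eqref{symmetry:minuszeta} with $M$ replaced by $G$. For the diagonal power factor this is immediate: it carries no parameter dependence, it is symmetric, it satisfies $\diag((-z)^{-1/4},z^{-1/4},(-z)^{1/4},z^{1/4})^{-1}=K^{-1}\diag((-z)^{-1/4},z^{-1/4},(-z)^{1/4},z^{1/4})K$, and conjugation by $\diag(J,-J)$ carries its value at $z$ to its value at $-z$. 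For the constant matrix $\Aa$ in \eqref{mixing:matrix} one checks $\overline{\Aa}=\diag(I_2,-I_2)\Aa\diag(I_2,-I_2)$, $\Aa^{-T}=K^{-1}\Aa K$, and invariance of $\Aa$ under conjugation by $\diag(J,-J)$. Finally, for the exponential factor one uses the elementary symmetries of the phases in \eqref{def:theta1}: $\overline{\theta_j(\overline z)}=\theta_j(z)$ with conjugated parameters, $\theta_3(z;\dots,-\tau)=-\theta_1(z;\dots,\tau)$ and $\theta_4(z;\dots,-\tau)=-\theta_2(z;\dots,\tau)$ (and the companions obtained by reindexing), and $\theta_1(-z;r_2,r_1,s_2,s_1,\tau)=\theta_2(z;r_1,r_2,s_1,s_2,\tau)$ (and companions), which is exactly what makes the diagonal exponential factor transform compatibly with the coordinate permutations induced by $K$ and $\diag(J,-J)$. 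These are precisely the verifications that enter the uniqueness proof of Lemma~\ref{lemma:symmetries}.

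Granting these, in each of the three relations of Lemma~\ref{lemma:symmetries} the common $G$-factor cancels, leaving the same three identities with $M$ replaced by the power series $P(z)=I+M_1/z+M_2/z^2+O(z^{-3})$. Reading off the coefficient of $1/z$ in each --- and using $\overline{M_1/\overline z}=\overline{M_1}/z$ for the first, $P^{-T}=I-M_1^T/z+O(z^{-2})$ for the second, and $P(-z)=I-M_1/z+O(z^{-2})$ for the third --- gives exactly \eqref{symmetry:conjugate:M1}, \eqref{symmetry:inversetranspose:M1}, \eqref{symmetry:minuszeta:M1}, respectively. The main obstacle is therefore confined to the prefactor bookkeeping of the previous paragraph; once that is done the rest is a mechanical comparison of power-series coefficients (and the same comparison at order $z^{-k}$ would yield analogous relations for the higher coefficients $M_k$, should they be needed).
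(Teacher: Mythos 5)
Your proof is correct and follows the same route the paper takes implicitly: the corollary is obtained from the symmetry relations of Lemma~\ref{lemma:symmetries} by inserting the asymptotic expansion \eqref{M:asymptotics}, cancelling the explicit prefactor (whose compatibility with the three symmetries is exactly the check underlying the uniqueness argument for the lemma), and comparing the $1/z$ coefficients. The prefactor verifications you list for the diagonal power factor, for $\Aa$, and for the phases $\theta_j$ are all accurate, so the proposal fills in precisely the bookkeeping the paper leaves unstated.
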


Lemma~\ref{lemma:symmetry:M1} is an immediate consequence of
Corollary~\ref{cor:symmetries}.

\subsection{Lax system}
\label{subsection:Lax}

To the RH matrix $M(z)$ there is associated a Lax system of differential
equations
\begin{equation}\label{Lax:system}
\frac{\pp}{\pp z}M =UM,\qquad \frac{\pp}{\pp s}M =VM,\qquad
\frac{\pp}{\partial \tau}M =WM,
\end{equation}
for certain coefficient matrices $U,V,W$. These matrices were obtained in the
symmetric case $r_1=r_2$, $s_1=s_2$ in \cite[Sec.~5.3]{Delvaux2012}, \cite{DG}
and for $\tau=0$ in \cite[Sec.~5.2]{DKZ}. We will consider the general
nonsymmetric case. To take derivatives with respect to $s_1$ or $s_2$, we again
parameterize $s_j=\si_j s$ with $\si_1,\si_2$ fixed and $s$ variable, as in
\eqref{s12:s}.

\begin{lemma} In the general nonsymmetric setting, with the parametrization $s_1=\si_1 s$, $s_2=\si_2 s$,
the coefficient matrices $U,V,W$ in \eqref{Lax:system} take the form
\begin{align}\label{Lax:zeta:nonsymm}
U&= \begin{pmatrix} -r_1 c+r_1^2\tau & r_2 d & r_1 i & 0 \\
-r_1 \wtil d & r_2 \wtil c-r_2^2\tau & 0 & r_2 i \\
(r_1 c^2-r_2 d\wtil d-2s_1+r_1z)i & -(r_1 b+r_2\wtil\beta) i & r_1 c+r_1^2\tau & r_1 d \\
-(r_1\beta+r_2\wtil b) i & (r_2\wtil c^2-r_1 d\wtil d-2s_2-r_2z)i & -r_2\wtil d
& -r_2\wtil c-r_2^2\tau
\end{pmatrix},
\\ \label{Lax:s:nonsymm}
V&= 2\begin{pmatrix} \si_1 c & \si_2 d & -\si_1 i & 0 \\
\si_1\wtil d & \si_2\wtil c & 0 & \si_2 i \\
\si_1(-c^2+\frac{r_2}{r_1} d\wtil d+\frac{\si_1}{r_1}s-z)i & (\si_1 b-\si_2\wtil\beta)i & -\si_1 c & -\si_1 d \\
(\si_1\beta-\si_2\wtil b)i & \si_2(\wtil c^2-\frac{r_1}{r_2}d\wtil
d-\frac{\si_2}{r_2} s-z)i & -\si_2 \wtil d & -\si_2 \wtil c
\end{pmatrix},
\\ \label{Lax:tau:nonsymm}
W& = (r_1^2+r_2^2)\begin{pmatrix} \frac{r_1^2}{r_1^2+r_2^2}z & -b & 0 & -di \\
-\wtil b & -\frac{r_2^2}{r_1^2+r_2^2}z & \wtil d i & 0 \\
0 & -fi & \frac{r_1^2}{r_1^2+r_2^2}z & -\wtil\beta \\
\wtil f i & 0 & -\beta & -\frac{r_2^2}{r_1^2+r_2^2}z
\end{pmatrix},
\end{align}
with the notations in \eqref{M1}.
\end{lemma}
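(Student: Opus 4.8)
The plan is to run the standard deformation argument for the RH problem~\ref{rhp:modelM}, following \cite[Sec.~5.2]{DKZ} and \cite{DG,Delvaux2012} but now keeping all five parameters $r_1,r_2,s_1,s_2,\tau$ general (with $s_j=\si_j s$ as in \eqref{s12:s}). Set
\[
U(z):=(\pp_z M)M^{-1},\qquad V(z):=(\pp_s M)M^{-1},\qquad W(z):=(\pp_\tau M)M^{-1}.
\]
Every jump matrix $J_k$ in Figure~\ref{fig:modelRHP} is constant (independent of $z$, $s$ and $\tau$), so differentiating $M_+=M_-J_k$ and multiplying on the right by $M_+^{-1}=J_k^{-1}M_-^{-1}$ shows that $U$, $V$ and $W$ have no jump across any $\Gamma_k$. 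Moreover $\det M$ extends to an entire function (each $\det J_k=1$) which is nowhere zero (it is forced by \eqref{M:asymptotics} to equal $e^{2(r_1^2-r_2^2)\tau z}$), so $M^{-1}=(\det M)^{-1}\operatorname{adj}M$ is bounded wherever $M$ is, in particular near $z=0$ by condition~(4) of the RH problem. Hence $U$, $V$, $W$ are entire functions of $z$.

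I would then read off their growth at $z=\infty$ from \eqref{M:asymptotics}. Write $M=S(z)\,\mathcal D(z)\,\Aa\,\mathcal E(z)$ with $S(z)=I+M_1z^{-1}+M_2z^{-2}+\cdots$, $\mathcal D(z)=\diag((-z)^{-1/4},z^{-1/4},(-z)^{1/4},z^{1/4})$, $\mathcal E(z)=\diag(e^{\theta_1},\ldots,e^{\theta_4})$ and $\Aa$ the mixing matrix \eqref{mixing:matrix}. Then, since diagonal matrices commute,
\[
U=S_zS^{-1}+S\,(\mathcal D_z\mathcal D^{-1})\,S^{-1}+S\,\mathcal D\Aa\,\Theta'\,\Aa^{-1}\mathcal D^{-1}\,S^{-1},\qquad \Theta':=\diag(\theta_1',\ldots,\theta_4'),
\]
and similarly for $V$ and $W$ with $\Theta'$ replaced by $\diag(\pp_s\theta_j)$ resp.\ $\diag(\pp_\tau\theta_j)$. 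The key point is that $\Aa$ couples exactly the index pairs $\{1,3\}$ and $\{2,4\}$, which are precisely the pairs along which $\theta_1+\theta_3=2r_1^2\tau z$ and $\theta_2+\theta_4=-2r_2^2\tau z$ (and their $s$- and $\tau$-derivatives) are polynomial in $z$; a short block computation using \eqref{def:theta1} then shows that conjugating $\Theta'$ (or its $\pp_s$, $\pp_\tau$ analogues) by $\mathcal D\Aa$ converts the fractional powers $(\mp z)^{\pm1/2}$, $(\mp z)^{\pm3/2}$ into integer powers $z^1$, $z^0$, $z^{-1}$. Thus $U$, $V$, $W$ are entire and $O(z)$ at infinity, hence by Liouville's theorem polynomials of degree $1$: $U=U_1z+U_0$, $V=V_1z+V_0$, $W=W_1z+W_0$.

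It then remains to extract the coefficients. The leading matrices $U_1$, $V_1$, $W_1$ come directly from the top-order part of $\mathcal D\Aa\,\Theta'\,\Aa^{-1}\mathcal D^{-1}$ (and its $\pp_s$, $\pp_\tau$ analogues); they are $M_1$-free and produce exactly the $z$-dependent entries of \eqref{Lax:zeta:nonsymm}--\eqref{Lax:tau:nonsymm} (for $W$ this is immediate, as the $(\mp z)^{3/2}$ and $(\mp z)^{1/2}$ parts of $\theta_j$ are $\tau$-independent and drop out). The constant matrices $U_0$, $V_0$, $W_0$ are obtained by collecting the order-$z^0$ contributions: the subleading $(\mp z)^{-1/2}$ parts of the $\Theta$-derivatives (which become order $z^0$ after the $\mathcal D$-conjugation), the term $\mathcal D_z\mathcal D^{-1}$ (relevant only for $U$), and the commutator $[M_1,U_1]$ (resp.\ $[M_1,V_1]$, $[M_1,W_1]$) arising from $(I+M_1z^{-1}+\cdots)(zU_1+U_0)(I-M_1z^{-1}+\cdots)$. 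Writing these order-$z^0$ terms in the entrywise notation \eqref{M1} for $M_1$ gives the stated forms of $U$, $V$, $W$; the symmetry relations of Corollary~\ref{cor:symmetries} (equivalently Lemma~\ref{lemma:symmetries}) can be used both to cut down the number of entries one has to compute and as a cross-check, and the compatibility conditions of the Lax system \eqref{Lax:system} provide a further consistency check since $M$ satisfies all three equations.

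The main obstacle will be the order-$z^0$ bookkeeping in this last step: balancing the $(\mp z)^{-1/2}$ contributions of the $\Theta$-derivatives against the commutator $[M_1,U_1]$, since this is exactly where the constants $-2s_1$, $-2s_2$, $r_j^2\tau$ and the combinations $r_1c^2-r_2d\wtil d$, $r_2\wtil c^2-r_1 d\wtil d$, etc.\ in $U$ (and the analogous entries of $V$ and $W$) get pinned down. By contrast the cancellation of the half-integer powers, though it requires care with the branch cuts in \eqref{def:theta1}, is automatic once the explicit form \eqref{mixing:matrix} of $\Aa$ is used.
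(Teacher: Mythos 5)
Your overall strategy is exactly the paper's: set $U=(\pp_z M)M^{-1}$, $V=(\pp_s M)M^{-1}$, $W=(\pp_\tau M)M^{-1}$, use the constancy of the jumps plus boundedness at the origin to see these are entire, use the asymptotics \eqref{M:asymptotics} (the conjugation of $\diag(\theta_j')$ by $\diag((-z)^{-1/4},z^{-1/4},(-z)^{1/4},z^{1/4})\,\Aa$ producing integer powers of $z$) together with Liouville, and then read off the coefficients entrywise from $M_1$ as in \eqref{M1}. This is the same computation the paper performs via \eqref{diffeq1:RH}.

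However, there is a concrete gap in your final step. You claim that collecting the orders $z^1$ and $z^0$ (leading term, $\mathcal D_z\mathcal D^{-1}$, and the commutator with $M_1$) already ``pins down'' the combinations $r_1c^2-r_2d\wtil d-2s_1$ and $r_2\wtil c^2-r_1d\wtil d-2s_2$ in the $(3,1)$ and $(4,2)$ entries of $U$ (and the analogous entries of $V$). It does not: the commutator of $M_1$ with the degree-one part produces those entries in the form $i\bigl(r_1z-s_1-r_1(a+\alpha)\bigr)$ and $-i\bigl(r_2z+s_2+r_2(\wtil a+\wtil\alpha)\bigr)$, i.e.\ in terms of the diagonal entries $a+\alpha$ and $\wtil a+\wtil\alpha$ of $M_1$, which do not occur in the stated formulas \eqref{Lax:zeta:nonsymm}--\eqref{Lax:s:nonsymm} at all. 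To reach the stated form you need the additional identities \eqref{Lax:rel:nonsymm}, namely $a+\alpha=-c^2+\tfrac{r_2}{r_1}d\wtil d+\tfrac{s_1}{r_1}$ and $\wtil a+\wtil\alpha=-\wtil c^2+\tfrac{r_1}{r_2}d\wtil d+\tfrac{s_2}{r_2}$, and these cannot be obtained from the $O(z^0)$ bookkeeping (nor from the symmetries of Lemma~\ref{lemma:symmetry:M1}); the paper gets them by going one order deeper, from the vanishing of the $(1,3)$ and $(2,4)$ entries of the $z^{-1}$ coefficient in \eqref{diffeq1:RH}, a computation in which also the subleading $(\mp z)^{-1/2}$ parts of $\theta_j'$ enter (and where, conveniently, $M_2$ drops out of those particular entries). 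Your plan, as written, stops at order $z^0$ and would therefore not yield the stated $(3,1)$ and $(4,2)$ entries of $U$ and $V$; only $W$ comes out directly, since its constant term is a pure commutator with a diagonal matrix and involves no diagonal entries of $M_1$. (A small related inaccuracy: $\mathcal D_z\mathcal D^{-1}$ is $O(z^{-1})$, not $O(z^0)$, so it is irrelevant for the polynomial part of $U$ but does enter the $z^{-1}$ relations just described.)
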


\begin{proof}
This is a routine calculation, that follows similarly as in the above cited
references \cite{Delvaux2012,DKZ,DG}. From the asymptotics
\eqref{M:asymptotics} we have for $z\to\infty$ that
\begin{multline}\label{diffeq1:RH}
\frac{\pp M}{\pp z}M^{-1} = \left(I+\frac{M_1}{z}+\cdots\right)
\begin{pmatrix} r_1^2\tau & 0 & i(r_1-s_1z^{-1}) & 0 \\ 0 & -r_2^2\tau & 0 & i(r_2+ s_2z^{-1}) \\
i(r_1z-s_1) & 0 & r_1^2\tau & 0 \\
0 & -i(r_2 z+ s_2) & 0 & -r_2^2\tau
\end{pmatrix}\\
\left(I-\frac{M_1}{z}+\cdots\right)+O(z^{-1}).
\end{multline}
Since the RH matrix $M(z)$ has constant jumps, the left hand side of
\eqref{diffeq1:RH} is an entire function of $z$. Liouville's theorem implies
that it is a polynomial in $z$. Collecting the polynomial terms in the right
hand side of \eqref{diffeq1:RH} we obtain
$$ U = \begin{pmatrix} r_1^2\tau & 0 & i r_1 & 0 \\ 0 & -r_2^2\tau & 0 & i r_2 \\
i(r_1z-s_1) & 0 & r_1^2\tau & 0 \\
0 & -i( r_2z+ s_2) & 0 & -r_2^2\tau
\end{pmatrix}+i M_1A- iA M_1,\qquad
A:=\begin{pmatrix}0&0&0&0\\ 0&0&0&0\\r_1&0&0&0\\0&-r_2&0&0\end{pmatrix}.$$ With
the help of \eqref{M1} and a small calculation, we then get
\eqref{Lax:zeta:nonsymm}. To obtain the $(3,1)$ and $(4,2)$ entries of
\eqref{Lax:zeta:nonsymm}, we also need the relations
\begin{eqnarray}\label{Lax:rel:nonsymm}
a+\alpha &=& -c^2 + \frac{r_2}{r_1} d\wtil d +\frac{s_1}{r_1}, \\
\wtil a+\wtil\alpha &=& -\wtil c^2 + \frac{r_1}{r_2} d\wtil d +\frac{s_2}{r_2},
\end{eqnarray}
which follow from the fact that the $(1,3)$ and $(2,4)$ entries in the $z^{-1}$
coefficient in \eqref{diffeq1:RH} are equal to zero. A similar argument yields
\eqref{Lax:s:nonsymm}--\eqref{Lax:tau:nonsymm}.
\end{proof}

\subsection{Proof of Proposition~\ref{prop:diffeq:pee}}
\label{subsection:proof:diffeqpee}

Let the vector $\mm(z)$ be a solution of $\frac{\pp}{\pp z}\mm=U\mm$ with $U$
in \eqref{Lax:zeta:nonsymm}. By splitting this equation in $2\times 2$ blocks
we get
\begin{equation}\label{split:2x2:1}
\begin{pmatrix} r_1^{-1}m_1'(z) \\ r_2^{-1}m_2'(z)\end{pmatrix}=\begin{pmatrix} -c+r_1\tau & r_1^{-1}r_2 d \\
-r_1r_2^{-1} \wtil d & \wtil c-r_2\tau
\end{pmatrix}\begin{pmatrix} m_1(z) \\ m_2(z)\end{pmatrix}+
i\begin{pmatrix} m_3(z) \\ m_4(z)\end{pmatrix},\qquad\qquad\qquad\qquad
\end{equation}\vspace{-6mm}
\begin{multline}\label{split:2x2:2}
\begin{pmatrix} r_1^{-1}m_3'(z) \\ r_2^{-1}m_4'(z)\end{pmatrix}=i\begin{pmatrix} c^2-r_1^{-1}r_2 d\wtil d-2r_1^{-1}s_1+z & -b-r_1^{-1}r_2\wtil\beta\\
-r_1r_2^{-1}\beta-\wtil b & \wtil c^2-r_1r_2^{-1} d\wtil d-2r_2^{-1}s_2-z
\end{pmatrix}\begin{pmatrix} m_1(z) \\ m_2(z)\end{pmatrix}\\ +
\begin{pmatrix} c+r_1\tau & d  \\
-\wtil d & -\wtil c-r_2\tau
\end{pmatrix}\begin{pmatrix} m_3(z) \\ m_4(z)\end{pmatrix}.
\end{multline}
From \eqref{split:2x2:1} and
\eqref{d:Painleve2:nonsymm}--\eqref{ctil:Hamiltonian:nonsymm}, we easily get
\eqref{diffeq:pee:3}--\eqref{diffeq:pee:4}. To prove the two remaining differential equations, we
take the derivative of \eqref{split:2x2:1} and use \eqref{split:2x2:2} to get
\begin{multline*}  \begin{pmatrix} r_1^{-2}m_1''(z) \\ r_2^{-2}m_2''(z)\end{pmatrix} =
\begin{pmatrix} -c+r_1\tau & r_1^{-2}r_2^2 d \\
-r_1^2r_2^{-2} \wtil d & \wtil c-r_2\tau
\end{pmatrix}\begin{pmatrix} r_1^{-1}m_1'(z) \\ r_2^{-1}m_2'(z)\end{pmatrix}
\\ - \begin{pmatrix} c^2-r_1^{-1}r_2 d\wtil d-2r_1^{-1}s_1+z & -b-r_1^{-1}r_2\wtil\beta\\
-r_1r_2^{-1}\beta-\wtil b & \wtil c^2-r_1r_2^{-1} d\wtil d-2r_2^{-1}s_2-z
\end{pmatrix}
\begin{pmatrix} m_1(z) \\ m_2(z)\end{pmatrix}\\
+\begin{pmatrix} c+r_1\tau & d  \\
-\wtil d & -\wtil c-r_2\tau
\end{pmatrix}
\left[\begin{pmatrix} r_1^{-1}m_1'(z) \\ r_2^{-1}m_2'(z)\end{pmatrix} -
\begin{pmatrix} -c+r_1\tau & r_1^{-1}r_2 d \\
-r_1r_2^{-1} \wtil d & \wtil c-r_2\tau
\end{pmatrix}\begin{pmatrix} m_1(z) \\ m_2(z)\end{pmatrix}\right].
\end{multline*}
From this equation and \eqref{d:Painleve2:nonsymm}--\eqref{betatil:PII} we
obtain the desired differential equations
\eqref{diffeq:pee:1}--\eqref{diffeq:pee:2}. This proves
Proposition~\ref{prop:diffeq:pee}(a).

To prove Part (b), let $\mm(z)$ satisfy the differential equations
\eqref{diffeq:pee:1}--\eqref{diffeq:pee:4}. From the proof of Part (a) above,
we see that $\frac{\pp}{\pp z}\mm=U\mm$ with $U$ in \eqref{Lax:zeta:nonsymm}.
But then $$\frac{\pp}{\pp z}[\what M^{-1} \mm]=\what M^{-1}(U-U)\mm=0,$$ which
implies Proposition~\ref{prop:diffeq:pee}(b). $\bol$

\subsection{Proof of Theorem~\ref{theorem:M1:nonsymm}} \label{subsection:proofPII}

The matrices $U,V,W$ in \eqref{Lax:system} satisfy the compatibility conditions
\begin{align}\label{compat:UV}
 \frac{\pp U}{\pp s} &=
\frac{\pp V}{\pp z} - UV + VU\\
\label{compat:UW}
 \frac{\pp U}{\pp \tau}&=
\frac{\pp W}{\pp z} - UW + WU.
\end{align}
These relations are obtained by calculating the mixed derivatives
$\frac{\pp^2}{\pp z\pp s}M=\frac{\pp^2}{\pp s\pp z}M$ and $\frac{\pp^2}{\pp
z\pp \tau}M=\frac{\pp^2}{\pp \tau\pp z}M$, respectively, in two different ways.

\begin{lemma}\label{lemma:DG:1}
Consider the matrices $U,V,W$ in
\eqref{Lax:zeta:nonsymm}--\eqref{Lax:tau:nonsymm}. Then with the expressions
\eqref{d:Painleve2:nonsymm}--\eqref{ftil:PII} in
Theorem~\ref{theorem:M1:nonsymm}, the compatibility conditions
\eqref{compat:UV}--\eqref{compat:UW} are satisfied.
\end{lemma}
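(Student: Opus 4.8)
The plan is a direct verification. With the formulas \eqref{Lax:zeta:nonsymm}--\eqref{Lax:tau:nonsymm} substituted in, each of $U,V,W$ is affine in $z$, whereas $\pp U/\pp s$, $\pp U/\pp\tau$, $\pp V/\pp z$ and $\pp W/\pp z$ are independent of $z$; so I would expand the right-hand sides of \eqref{compat:UV}--\eqref{compat:UW} in powers of $z$ and check the identities degree by degree. The $z^2$-coefficients arise only from the commutators $[V,U]$ and $[W,U]$ and vanish for structural reasons: the $z$-homogeneous parts of $U$ and of $V$ both lie in the abelian nilpotent algebra spanned by the matrix units $E_{31}$ and $E_{42}$, hence commute, while the $z$-homogeneous part of $W$ is the diagonal matrix $z\,\diag(r_1^2,-r_2^2,r_1^2,-r_2^2)$, which commutes with $E_{31}$ and $E_{42}$ because the relevant pairs of diagonal entries coincide. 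What remains is the finite list of $z^1$- and $z^0$-coefficient identities, i.e. scalar identities among the entries $b,\wtil b,c,\wtil c,d,\wtil d,\beta,\wtil\beta,f,\wtil f$ (these are the only entries occurring in $U,V,W$) together with their $s$- and $\tau$-derivatives.

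Into these scalar identities I would plug the explicit expressions \eqref{d:Painleve2:nonsymm}--\eqref{ftil:PII} and reduce everything to identities in $q=q(\si)$, $q'(\si)$, $u=u(\si)$ and the parameters $r_1,r_2,s_1,s_2,\tau$. This uses: (i) the Painlev\'e~II equation $q''(\si)=\si q(\si)+2q^3(\si)$ and the Hamiltonian relation $u'(\si)=-q^2(\si)$ following from \eqref{def:Hamiltonian}, together with \eqref{TW:rel5} wherever the Hamiltonian appears quadratically; (ii) the chain rule, since $\si$ and $\ct$ depend on $s$ (through $s_j=\si_j s$) and on $\tau$ via \eqref{C:RH}, so I would first record $\pp\si/\pp s$, $\pp\si/\pp\tau$ and the logarithmic derivatives $\pp_s\log\ct$, $\pp_\tau\log\ct$; and (iii) the auxiliary relations \eqref{Lax:rel:nonsymm} for $a+\alpha$ and $\wtil a+\wtil\alpha$, already forced by the $z^{-1}$ term in \eqref{diffeq1:RH}. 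To keep the bookkeeping manageable I would exploit the symmetry relations of Corollary~\ref{cor:symmetries}: the matrices $U,V,W$ transform in a definite way under $(r_1,r_2,s_1,s_2)\leftrightarrow(r_2,r_1,s_2,s_1)$ and under $\tau\to-\tau$, so it suffices to check one representative of each orbit of scalar identities and obtain the rest by applying these involutions (this also re-derives Lemma~\ref{lemma:symmetry:M1}). Finally, the already known cases $\tau=0$ \cite{DKZ} and $r_1=r_2$, $s_1=s_2$ \cite{DG,Delvaux2012} serve both as a consistency check and as an organizing device: writing each side as its $\tau=0$ value plus a correction, the work reduces to verifying that the corrections — precisely the terms carrying the diagonal $\pm r_j^2\tau$ pieces of $U$, the $\tau^2$-term in $\si$, and all derivatives of $\ct$ — cancel.

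The main obstacle is not conceptual but the volume and delicacy of the calculation. The entries $f,\wtil f$ in \eqref{f:PII}--\eqref{ftil:PII} themselves contain $\pp d/\pp\tau$ and $\pp\wtil d/\pp\tau$, and these feed into $W$, so \eqref{compat:UW} secretly involves $q''$; one must differentiate \eqref{d:Painleve2:nonsymm}--\eqref{dtil:Painleve2:nonsymm} carefully, tracking the $\ct$-factor, and then invoke Painlev\'e~II to close the identity. The chain-rule contributions from $\ct$ — which equals $1$ in the symmetric case and is therefore invisible in \cite{DG} — are the genuinely new ingredient in the non-symmetric setting and the place where an error is most likely to occur, so I would verify those terms in isolation before assembling the full identity.
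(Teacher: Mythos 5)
Your plan is essentially the paper's own proof: the paper likewise verifies \eqref{compat:UV}--\eqref{compat:UW} by writing them out entrywise (with Maple) as scalar identities in $b,\wtil b,c,\wtil c,d,\wtil d,\beta,\wtil\beta,f,\wtil f$ and checking by direct calculation that the formulas \eqref{d:Painleve2:nonsymm}--\eqref{ftil:PII} satisfy them, using the chain rule through $\si$ and $\ct$ together with Painlev\'e~II and the Hamiltonian relations. Your additional observations (the $z^2$-terms vanishing structurally, and the symmetry reductions from Corollary~\ref{cor:symmetries}) are correct and only streamline the same brute-force verification.
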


\begin{proof}
This is a lengthy but direct calculation. It is best performed with the help of
a symbolic computer program such as Maple. First we consider the compatibility
condition with respect to $s$. Writing the matrix equation \eqref{compat:UV} in
entrywise form, with the help of Maple, we obtain the system of equations, with
the prime denoting the derivative with respect to $s$,
\begin{eqnarray}
\label{comp:s:d1}r_1 d' &=& 2(\si_1 r_2+\si_2 r_1)(\wtil c d-b)+2(r_1^2+r_2^2)\si_1\tau d \\
\label{comp:s:d2}r_2 d' &=& 2(\si_1 r_2+\si_2 r_1)(c d-\wtil\beta)-2(r_1^2+r_2^2)\si_2\tau d \\
\label{comp:s:dtil1}r_1\wtil d' &=& 2(\si_1 r_2+\si_2 r_1)(\wtil c \wtil d-\beta)-2(r_1^2+r_2^2)\si_1\tau \wtil d \\
\label{comp:s:dtil2}r_2\wtil d' &=& 2(\si_1 r_2+\si_2 r_1)(c \wtil d-\wtil b)+2(r_1^2+r_2^2)\si_2\tau \wtil d \\
\label{comp:s:c} r_1 c' &=& 2(\si_1 r_2+\si_2 r_1)d\wtil d+2\si_1^2 s \\
\label{comp:s:ctil} r_2\wtil c' &=& 2(\si_1 r_2+\si_2 r_1)d\wtil d+2\si_2^2 s
\end{eqnarray}
and
\begin{multline}\label{comp:s:bbetatil}(r_1 b+r_2\wtil\beta)' = (r_1\wtil c+r_2 c)d'-2(r_1^2+r_2^2)\tau\si_1
(\wtil c
d-b)+2(r_1^2+r_2^2)\tau\si_2(cd-\wtil\beta)\\
-2\frac{r_1\si_2+r_2\si_1}{r_1r_2}\left((r_1^2+r_2^2)d^2\wtil
d+(r_1\si_2+r_2\si_1)sd\right)-4\si_1 \si_2 sd
\end{multline}
\vspace{-8mm}
\begin{multline}(r_1\beta+r_2\wtil b)' = (r_1\wtil c+r_2 c)\wtil d'+2(r_1^2+r_2^2)\si_1\tau (\wtil c
\wtil d-\beta)-2(r_1^2+r_2^2)\si_2\tau(c\wtil d-\wtil b)\\
-2\frac{r_1\si_2+r_2\si_1}{r_1r_2}\left((r_1^2+r_2^2)\wtil d^2
d+(r_1\si_2+r_2\si_1)s\wtil d\right)-4\si_1 \si_2 s\wtil d.
\end{multline}

Next we consider the compatibility condition with respect to $\tau$. By writing
the matrix equation \eqref{compat:UW} in entrywise form, with the help of
Maple, we obtain, with the prime denoting the derivative with respect to
$\tau$,
\begin{eqnarray}
\label{comp:tau:d1} r_1 d' &=&
(r_1^2+r_2^2)(r_1^2\tau\wtil\beta+r_2^2\tau\wtil\beta+r_1 c\wtil\beta +r_2\wtil c\wtil \beta +r_2  d^2 \wtil d-r_1 c^2 d+2 s_1 d+r_2 f)\\
\label{comp:tau:d2} r_2 d' &=&
(r_1^2+r_2^2)(r_1^2\tau b+r_2^2\tau b -r_1 c b-r_2 \wtil c b -r_1 d^2\wtil d+r_2 \wtil c^2 d -2s_2 d-r_1 f)\\
\label{comp:tau:d3} r_1 \wtil d' &=&
(r_1^2+r_2^2)(r_1^2\tau\wtil b +r_2^2\tau\wtil b-r_2 \wtil c\wtil b-r_1 c\wtil b-r_2\wtil d^2 d+r_1 c^2 \wtil d-2s_1\wtil d -r_2\wtil f)\\
\label{comp:tau:d4} r_2 \wtil d' &=& (r_1^2+r_2^2) (r_1^2\tau\beta
+r_2^2\tau\beta+r_2\wtil c\beta+r_1 c\beta
 +r_1\wtil d^2 d-r_2\wtil c^2\wtil d+2 s_2\wtil d+r_1\wtil f)\\
\label{comp:tau:c} c' &=& (r_1^2+r_2^2)(d\beta-\wtil d b)  \\
\label{comp:tau:ctil} \wtil c' &=& (r_1^2+r_2^2)(\wtil d\wtil\beta- d\wtil
b)\end{eqnarray} and \begin{multline*} r_1b'+r_2\wtil\beta' =
(r_1^2+r_2^2)(-r_1 c^2 b+r_2\wtil
c^2\wtil\beta-r_1 d\wtil d\wtil\beta+r_2 d \wtil d b+2 s_1 b-2s_2\wtil b\\
-r_1^2\tau f-r_2^2 \tau f-r_1 c f+r_2 \wtil c
f)\end{multline*}\vspace{-8mm}\begin{multline*} r_2\wtil b'+r_1\beta' =
(r_1^2+r_2^2)(-r_2 \wtil c^2 \wtil b+r_1
c^2 \beta-r_2 d\wtil d\beta+r_1 d \wtil d \wtil b+2 s_2 \wtil b-2s_1 b\\
-r_2^2\tau \wtil f-r_1^2 \tau \wtil f-r_2 \wtil c \wtil f+r_1 c \wtil f).
\end{multline*}
Direct calculations show that all these equations are satisfied by
\eqref{d:Painleve2:nonsymm}--\eqref{ftil:PII}.
\end{proof}

\begin{lemma}\label{lemma:DG:2}
Theorem~\ref{theorem:M1:nonsymm} holds true if $\tau=0$.
\end{lemma}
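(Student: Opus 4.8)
The plan is to reduce the statement to results already in the literature. When $\tau=0$ the constants in \eqref{C:RH} degenerate to $C=(r_1^{-2}+r_2^{-2})^{1/3}$, $\ct=\sqrt{r_1/r_2}$ and $\si=2C^{-1}(s_1/r_1+s_2/r_2)$, the phases $\theta_j$ in \eqref{def:theta1} lose their term linear in $z$, and the RH problem~\ref{rhp:modelM} becomes --- after at most a trivial rescaling of $z$ and a diagonal conjugation absorbing the factor $\sqrt{r_1/r_2}$ --- exactly the $4\times 4$ RH problem whose residue matrix was computed by Kuijlaars, Zhang and the author in \cite{DKZ}. So the first step is to match conventions carefully: compare the contour $\bigcup_{k=0}^{9}\Gamma_k$, its orientation, the jump matrices of Figure~\ref{fig:modelRHP}, the mixing matrix $\Aa$ in \eqref{mixing:matrix} and the asymptotic normalisation \eqref{M:asymptotics} with those of \cite{DKZ}, and record the (diagonal) similarity transformation, if any, that identifies the two residue matrices $M_1$.

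The second step is to transcribe the Painlev\'e formulas for the entries of $M_1$ obtained in \cite{DKZ} into the present notation \eqref{M1}, and to check entry by entry that they coincide with \eqref{d:Painleve2:nonsymm}--\eqref{betatil:PII} evaluated at $\tau=0$. Observe that $\ct=\sqrt{r_1/r_2}$ at $\tau=0$ makes all these formulas manifestly invariant under $r_1\leftrightarrow r_2$, $s_1\leftrightarrow s_2$ --- for instance $d=\wtil d=q(\si)/(C\sqrt{r_1 r_2})$ --- in agreement with the remark following \eqref{M1}. The symmetry relations of Corollary~\ref{cor:symmetries}, which at $\tau=0$ reduce to $M_1^{T}=-K^{-1}M_1K$ together with the reflection \eqref{symmetry:minuszeta:M1}, cut the number of independent checks roughly in half. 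For \eqref{d:Painleve2:nonsymm}--\eqref{ctil:Hamiltonian:nonsymm} this is a direct comparison with the formulas of \cite{DKZ}; for \eqref{b:PII}--\eqref{betatil:PII} one additionally invokes the vanishing of certain $z^{-1}$-coefficients in the Lax equation $\pp M/\pp z=UM$, of which \eqref{Lax:rel:nonsymm} is one instance, all already implicit in \cite{DKZ}.

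The only genuinely new ingredient is the $\tau$-derivative term $\pp d/\pp\tau$ (and $\pp\wtil d/\pp\tau$) appearing in \eqref{f:PII}--\eqref{ftil:PII}, since \cite{DKZ} works at the fixed value $\tau=0$. Here I would argue that these two identities carry no information beyond the rest: they are precisely the $(3,1)$- and $(4,2)$-entries of the relation $\pp M/\pp\tau=WM$ with $W$ as in \eqref{Lax:tau:nonsymm} --- equivalently, equations \eqref{comp:tau:d1}--\eqref{comp:tau:d2} --- so once $d,\wtil d,c,\wtil c,b,\wtil b,\beta,\wtil\beta$ are known and the Lax system \eqref{Lax:system} is in force, $f$ and $\wtil f$, and in particular their values at $\tau=0$, are determined and automatically satisfy \eqref{f:PII}--\eqref{ftil:PII}. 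I expect the main obstacle to be purely bookkeeping: pinning down the exact dictionary between the normalisation of \cite{DKZ} and the present one so that the entrywise comparison is unambiguous. Once that dictionary is fixed the verification is routine, and the symmetric subcase $r_1=r_2$, $s_1=s_2$ treated in \cite{Delvaux2012,DG} provides an independent consistency check.
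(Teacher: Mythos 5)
Your overall strategy is the same as the paper's: quote \cite[Th.~2.4]{DKZ} for the $\tau=0$ values of $d,\wtil d,c,\wtil c$ (your observation that $\ct=\sqrt{r_1/r_2}$ makes $d=\wtil d=q(\si)/(C\sqrt{r_1r_2})$ is correct), and obtain the remaining entries from the Lax machinery. Your treatment of $f,\wtil f$ is also essentially the paper's: \eqref{f:PII}--\eqref{ftil:PII} are literal rearrangements of the compatibility relations \eqref{comp:tau:d2}--\eqref{comp:tau:d3} (not, as you write, entries of $\pp M/\pp\tau=WM$ itself, but of the compatibility condition \eqref{compat:UW}; also note you cite \eqref{comp:tau:d1}--\eqref{comp:tau:d2}, which only handle $f$ -- for $\wtil f$ you need \eqref{comp:tau:d3} or \eqref{comp:tau:d4}), and since these relations hold identically once the Lax system \eqref{Lax:system} with \eqref{Lax:zeta:nonsymm}--\eqref{Lax:tau:nonsymm} is established, they in particular hold at $\tau=0$.

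The genuine gap is your derivation of \eqref{b:PII}--\eqref{betatil:PII} at $\tau=0$. You propose to get $b,\wtil b,\beta,\wtil\beta$ from ``the vanishing of certain $z^{-1}$-coefficients in the Lax equation $\pp M/\pp z=UM$, of which \eqref{Lax:rel:nonsymm} is one instance, all already implicit in \cite{DKZ}.'' Those $z^{-1}$-coefficient conditions are purely algebraic constraints among the entries of $M_1$ (and $M_2$) at fixed parameter values; they cannot by themselves produce the $q'(\si)$ terms appearing in \eqref{b:PII}--\eqref{betatil:PII}, since $q'$ only enters through a derivative with respect to a parameter. Moreover, the paper's own proof of this lemma makes clear that \cite[Th.~2.4]{DKZ} supplies only \eqref{d:Painleve2:nonsymm}--\eqref{ctil:Hamiltonian:nonsymm}, so ``implicit in \cite{DKZ}'' is not a safe citation for the $b$-type entries. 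What is actually needed (and what the paper does) is the $s$-compatibility system \eqref{comp:s:d1}--\eqref{comp:s:dtil2}: at $\tau=0$ these can be solved for $b,\wtil\beta,\beta,\wtil b$ in terms of $\wtil c d$, $c d$, $\wtil c\wtil d$, $c\wtil d$ and $\pp d/\pp s$, $\pp\wtil d/\pp s$, and differentiating the already-known formulas \eqref{d:Painleve2:nonsymm}--\eqref{dtil:Painleve2:nonsymm} in $s$ (through $\si$) is precisely what generates the $q'(\si)$ terms, e.g.\ $r_1\,\pp d/\pp s\,/\bigl(2(\si_1 r_2+\si_2 r_1)\bigr)=(r_2^2C^2\ct)^{-1}q'(\si)$ recovers \eqref{b:PII} at $\tau=0$. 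Without this step (or an explicit verification that \cite{DKZ} itself contains the $b$-type formulas), your argument does not establish all the identities of Theorem~\ref{theorem:M1:nonsymm} at $\tau=0$; note also that your subsequent determination of $f,\wtil f$ presupposes exactly these entries as input.
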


\begin{proof} Equations \eqref{d:Painleve2:nonsymm}--\eqref{ctil:Hamiltonian:nonsymm} follow from
\cite[Th.~2.4]{DKZ}. The other equations in Theorem~\ref{theorem:M1:nonsymm}
then follow from \eqref{comp:s:d1}--\eqref{comp:s:dtil2} and
\eqref{comp:tau:d2}--\eqref{comp:tau:d3}.
\end{proof}

With the help of Lemmas~\ref{lemma:DG:1}--\ref{lemma:DG:2}, one can prove
Theorem~\ref{theorem:M1:nonsymm} for $\tau\neq 0$ in the same way as in
\cite[Sec.~5]{DG}, where the symmetric case was considered. This is a lengthy
and tedious calculation that follows exactly the same plan as in \cite{DG}. We
note that the same reasoning also yields the solvability statement in
Proposition~\ref{prop:solv}. We do not go into the details.

\subsection*{Alternative approach to Theorem~\ref{theorem:M1:nonsymm}}

The above reasoning does not give any insight on the origin of the expressions
in Theorem~\ref{theorem:M1:nonsymm}. Therefore, in the remaining part of this
section, let us deduce these formulas in a more direct way. The calculations
below are partly heuristic in the sense that we will make an ansatz
\eqref{comp:ansatz:s}, \eqref{comp:ansatz:tau}. We start with

\begin{lemma} The numbers $d,\wtil d$ in \eqref{M1} satisfy the system of coupled second-order
differential equations
\begin{multline}\label{comp:d:PII}
\frac{\pp^2 d}{\pp s^2} = 4\tau(r_1\si_1-r_2\si_2)\frac{\pp d}{\pp s}
\\ -4(r_1^2+r_2^2)(\si_1^2+\si_2^2) \tau^2 d+8\frac{(r_1\si_2+r_2\si_1)^2}{r_1 r_2} d^2\wtil
d+8\frac{(r_1\si_2+r_2\si_1)^3}{r_1r_2(r_1^2+r_2^2)}sd,
\end{multline}
\vspace{-8mm}
\begin{multline}\label{comp:dtil:PII}
\frac{\pp^2 \wtil d}{\pp s^2} = -4\tau(r_1\si_1-r_2\si_2)\frac{\pp\wtil d}{\pp
s}
\\ -4(r_1^2+r_2^2)(\si_1^2+\si_2^2) \tau^2 \wtil d+8\frac{(r_1\si_2+r_2\si_1)^2}{r_1 r_2} \wtil d^2
d+8\frac{(r_1\si_2+r_2\si_1)^3}{r_1r_2(r_1^2+r_2^2)}s\wtil d.
\end{multline}
\vspace{-3mm} Moreover,
\begin{equation}\label{comp:d:mixed} \frac{\pp
d}{\pp\tau}= -\frac{r_1r_2(r_1^2+r_2^2)}{\si_1r_2+\si_2 r_1}\tau\frac{\pp
d}{\pp s} + (r_1^2+r_2^2)^2 \frac{\si_1 r_2-\si_2 r_1}{\si_1r_2+\si_2
r_1}\tau^2 d+2(r_1 s_1- r_2 s_2) d.
\end{equation}
\end{lemma}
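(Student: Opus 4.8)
The plan is to derive everything from the entrywise form of the compatibility conditions \eqref{compat:UV}--\eqref{compat:UW}, that is, from equations \eqref{comp:s:d1}--\eqref{comp:s:bbetatil} in the $s$-direction and \eqref{comp:tau:d1}--\eqref{comp:tau:ctil} in the $\tau$-direction. These hold unconditionally, since $M(z)$ genuinely satisfies the Lax system \eqref{Lax:system} with the matrices \eqref{Lax:zeta:nonsymm}--\eqref{Lax:tau:nonsymm}; in particular, and in contrast with the first proof of Theorem~\ref{theorem:M1:nonsymm}, no prior knowledge of the Painlev\'e formulas is used. Throughout I abbreviate $\rho:=r_1^2+r_2^2$ and $\mu:=\si_1 r_2+\si_2 r_1$, and I write $d_s$, $d_{ss}$ for derivatives with respect to $s$. \textbf{Step 1 (eliminate $b$ and $\wtil\beta$).} Equations \eqref{comp:s:d1} and \eqref{comp:s:d2} are two linear relations among $d_s$, $b$, $\wtil\beta$ whose remaining coefficients involve only $d$, $c$, $\wtil c$, $\tau$. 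Solving for $b$ and $\wtil\beta$ gives, after simplification,
\[
  r_1 b+r_2\wtil\beta=(r_1\wtil c+r_2 c)\,d-\frac{\rho}{2\mu}\,d_s+\frac{\rho\tau}{\mu}(r_1\si_1-r_2\si_2)\,d ,
\]
and it also lets us replace the combinations $\wtil c\,d-b$ and $c\,d-\wtil\beta$ wherever they occur below.

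\textbf{Step 2 (the second-order equation \eqref{comp:d:PII}).} Differentiate the displayed identity with respect to $s$, using \eqref{comp:s:c}--\eqref{comp:s:ctil} to substitute for $c_s$ and $\wtil c_s$, and equate the result with the right-hand side of \eqref{comp:s:bbetatil}. The term $(r_1\wtil c+r_2 c)d_s$ cancels on both sides, and, inserting the Step~1 replacements for $\wtil c\,d-b$ and $c\,d-\wtil\beta$, all quantities other than $d$, $\wtil d$ disappear and one is left with a relation between $d_{ss}$, $d_s$, $d$, $\wtil d$ and $s$. The only non-mechanical point is the algebraic identity $\mu^2+2r_1r_2\si_1\si_2+r_1^2\si_2^2+r_2^2\si_1^2=2\mu^2$, which collapses the coefficient of $sd$; after using it and multiplying through by $2\mu/\rho$ one obtains exactly \eqref{comp:d:PII}. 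Equation \eqref{comp:dtil:PII} then follows either by rerunning this computation with the subscripts $1$ and $2$ interchanged and $d$, $\wtil d$ swapped, or at once from the symmetry relation \eqref{symmetry:minuszeta:M1} of Corollary~\ref{cor:symmetries}, which interchanges $d$ with $\wtil d$ and $(r_1,\si_1)$ with $(r_2,\si_2)$.

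\textbf{Step 3 (the mixed relation \eqref{comp:d:mixed}).} Form $r_1$ times \eqref{comp:tau:d1} plus $r_2$ times \eqref{comp:tau:d2}; the contributions $r_1r_2 f$ and $-r_1r_2 f$ cancel, expressing $\rho\,\pp d/\pp\tau$ as an explicit combination of $b$, $\wtil\beta$, $c$, $\wtil c$, $d$, $\wtil d$, $s_1$, $s_2$. Substituting the Step~1 expressions for $b$ and $\wtil\beta$ triggers three cancellations: the quadratic terms $r_1^2c^2 d-r_2^2\wtil c^2 d$ cancel the term $(r_2^2\wtil c^2-r_1^2 c^2)d$ already present, the terms linear in $c$ and $\wtil c$ sum to zero once $r_1\si_2+r_2\si_1=\mu$ is used, and the $d^2\wtil d$ terms cancel. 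What survives is precisely \eqref{comp:d:mixed}.

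\textbf{Expected obstacle.} There is no conceptual hurdle: every ingredient---the Lax matrices \eqref{Lax:zeta:nonsymm}--\eqref{Lax:tau:nonsymm} and the entrywise compatibility equations---is already available, and all manipulations are forced. The real work lies in the bookkeeping, namely arranging the substitutions so that the many monomials in $r_1,r_2,\si_1,\si_2,\tau$ visibly collapse, and recognizing the two simplifying identities above. As with the companion Lemma~\ref{lemma:DG:1}, it is prudent to carry out and double-check these computations with a symbolic algebra package.
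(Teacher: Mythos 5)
Your proposal is correct and follows essentially the same route as the paper: both derive \eqref{comp:d:PII} from the entrywise compatibility equations \eqref{comp:s:d1}--\eqref{comp:s:bbetatil}, obtain \eqref{comp:dtil:PII} by the symmetry of Lemma~\ref{lemma:symmetry:M1}, and get \eqref{comp:d:mixed} by eliminating $f$ from \eqref{comp:tau:d1}--\eqref{comp:tau:d2} and substituting the expressions for $b,\wtil\beta$ coming from \eqref{comp:s:d1}--\eqref{comp:s:d2} (the paper packages these as \eqref{comp:bbs}--\eqref{comp:ddcomb}). The only quibble is cosmetic: the $d^2\wtil d$ terms in Step~3 already cancel when forming $r_1\cdot\eqref{comp:tau:d1}+r_2\cdot\eqref{comp:tau:d2}$, not upon the Step~1 substitution.
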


\begin{proof} Equation~\eqref{comp:d:PII} follows from \eqref{comp:s:d1}--\eqref{comp:s:d2} and \eqref{comp:s:c}--\eqref{comp:s:bbetatil}
after some lengthy algebraic manipulations. Equation~\eqref{comp:dtil:PII}
follows by symmetry. To obtain \eqref{comp:d:mixed}, first note that
\eqref{comp:s:d1}--\eqref{comp:s:d2} imply the relations
\begin{equation}\label{comp:bbs}
  r_2(\wtil c d-b)-r_1(cd-\wtil\beta)+(r_1^2+r_2^2)\tau d=0,
\end{equation}
\begin{equation}\label{comp:ddcomb}
r_1 r_2 \frac{\pp d}{\pp s} = (\si_1 r_2+\si_2 r_1)(r_2\wtil c d-r_2b+r_1c
d-r_1\wtil\beta)+(r_1^2+r_2^2)(\si_1 r_2-\si_2 r_1)\tau d.
\end{equation}
Now by eliminating $f$ from \eqref{comp:tau:d1}--\eqref{comp:tau:d2} we get
\begin{equation}
\frac{\pp d}{\pp\tau} = (r_1^2+r_2^2) (r_2 b + r_1\wtil\beta) \tau-(r_2
b-r_1\wtil\beta) (r_1 c+r_2 \wtil c) -d (r_1^2 c^2-r_2^2 \wtil c^2)+2(r_1 s_1-
r_2 s_2) d,
\end{equation}
On account of \eqref{comp:bbs} this becomes
\begin{eqnarray*}
\frac{\pp d}{\pp\tau} &=& (r_1^2+r_2^2) (r_2 b + r_1\wtil\beta-r_1 cd-r_2 \wtil
c d)\tau +2(r_1 s_1- r_2 s_2) d.
\end{eqnarray*}
Combining this with \eqref{comp:ddcomb} we obtain \eqref{comp:d:mixed}.
\end{proof}

We seek a solution to the differential equations
\eqref{comp:d:PII}--\eqref{comp:dtil:PII} in the form
\begin{equation}\label{comp:ansatz:0} d = e^{h}g,\qquad \wtil d = e^{-h} g,
\end{equation}
with $h=h(s,\tau)$ an odd function of $\tau$ and $g=g(s,\tau)$ an even function
of $\tau$ (recall Lemma~\ref{lemma:symmetry:M1}). Plugging this in
\eqref{comp:d:PII} we find, with again the prime denoting the derivative with
respect to $s$,
\begin{multline}\label{comp:g:PII}
g''+2h'g'+((h')^2+h'') g = 4\tau(r_1\si_1-r_2\si_2)(g'+h' g)
\\ -4(r_1^2+r_2^2)\tau^2 (\si_1^2+\si_2^2) g+8\frac{(r_1\si_2+r_2\si_1)^2}{r_1 r_2}
g^3+8\frac{(r_1\si_2+r_2\si_1)^3}{r_1r_2(r_1^2+r_2^2)}sg.
\end{multline}
To obtain further progress we make the ansatz
\begin{equation}\label{comp:ansatz:s} \frac{\pp^2 h}{\pp s^2}=0,\qquad \frac{\pp h}{\pp s} =
2(r_1\si_1-r_2\si_2)\tau. \end{equation} After a little calculation,
\eqref{comp:g:PII} then simplifies to
\begin{equation}\label{comp:g:PII:2}
g'' =
\\ -4(r_1\si_2+r_2\si_1)^2\tau^2 g+8\frac{(r_1\si_2+r_2\si_1)^2}{r_1r_2}
g^3+8\frac{(r_1\si_2+r_2\si_1)^3}{r_1r_2(r_1^2+r_2^2)}sg.
\end{equation}
We can relate \eqref{comp:g:PII:2} to the Painlev\' e~II equation. We have that
$q=q(s)$ satisfies $q'' = s q + 2q^3$, if and only if
\begin{equation}\label{comp:ansatz:PII} g(s) := c_1 q(c_2 s + c_3) \end{equation} satisfies
\begin{equation}
    g'' = c_2^2 c_3 g+2 \frac{c_2^2}{c_1^2} g^3+ c_2^3 s g.
\end{equation}
Comparing coefficients with \eqref{comp:g:PII:2}, we see that \begin{eqnarray}
\label{comp:PII:1}c_1 &=&  \frac{(r_1r_2)^{1/6}}{(r_1^2+r_2^2)^{1/3}} \\
\label{comp:PII:2}c_2 &=&
2\frac{(r_1\si_2+r_2\si_1)}{(r_1r_2(r_1^2+r_2^2))^{1/3}}  \\
\label{comp:PII:3}c_3 &=& -(r_1r_2(r_1^2+r_2^2))^{2/3}\tau^2.
\end{eqnarray}
Finally, substituting the formulas \eqref{comp:ansatz:0},
\eqref{comp:ansatz:PII},
\begin{equation}\label{comp:ansatz} d = e^{h}g =e^h \frac{(r_1r_2)^{1/6}}{(r_1^2+r_2^2)^{1/3}}
q\left(2\frac{(r_1\si_2+r_2\si_1)}{(r_1r_2(r_1^2+r_2^2))^{1/3}} s
-(r_1r_2(r_1^2+r_2^2))^{2/3}\tau^2\right)
\end{equation}
in \eqref{comp:d:mixed} and using again \eqref{comp:ansatz:s} we find after
some calculations,
\begin{equation}\label{comp:ansatz:tau} \frac{\pp h}{\pp \tau} =
(r_2^4-r_1^4)\tau^2+2(r_1\si_1-r_2\si_2)s,
\end{equation}
where we are assuming that the choice of the Painlev\'e~II solution $q$ in
\eqref{comp:ansatz} is independent from $\tau$. From
\eqref{comp:ansatz}--\eqref{comp:ansatz:tau} and the known result for $\tau=0$
\cite[Th.~2.4]{DKZ} we get the expression for $d$ in
Theorem~\ref{theorem:Airyformulas:FV} (with $q$ the Hastings-McLeod solution to
Painlev\'e~II). By symmetry we obtain the expression for $\wtil d$. From
\eqref{comp:tau:c}, \eqref{comp:s:d1}, \eqref{comp:s:dtil1} and a little
calculation we then find $$\frac{\pp}{\pp\tau}c = -2r_1^{-1}(r_1^2+r_2^2)\tau
C^{-2} q^2(\si)= \frac{\pp}{\pp\tau}(-r_1^{-1}C^{-1} u(\si)),$$ where the
second equality follows from \eqref{TW:rel3} and \eqref{C:RH}. Combining this
with the known result for $\tau=0$ \cite[Th.~2.4]{DKZ} we get the expression
for $c$ in Theorem~\ref{theorem:Airyformulas:FV}.
From \eqref{comp:s:d1} and a little calculation we find the expression for $b$,
while \eqref{comp:tau:d2} yields the formula for $f$. Finally, the remaining
formulas in Theorem~\ref{theorem:Airyformulas:FV} follow from symmetry
considerations, see Lemma~\ref{lemma:symmetry:M1}. $\bol$

\end{document}